\newtheorem{definition}{Definition}[section]
\newtheorem{Thm}{Theorem}
\newtheorem{prop}[Thm]{Proposition}
\newtheorem{remark}{Remark}[section]
\newtheorem{lemma}[Thm]{Lemma}
\newtheorem{corollary}[Thm]{Corollary}
\newcommand{\tabincell}[2]{\begin{tabular}{@{}#1@{}}#2\end{tabular}}
 \numberwithin{equation}{section}
 \numberwithin{Thm}{section}
\newcommand{\be}{\begin{equation}}
\newcommand{\ee}{\end{equation}}
\newcommand\bes{\begin{eqnarray}}
\newcommand\ees{\end{eqnarray}}
\newcommand{\bess}{\begin{eqnarray*}}
\newcommand{\eess}{\end{eqnarray*}}
\title[Principal eigenvalue for a linear time-periodic parabolic operator]
{Asymptotics 
of the principal eigenvalue  for a linear time-periodic parabolic operator II: Small diffusion}
\author{Shuang Liu,\ \ Yuan Lou,\ \ Rui Peng\ \ and\ \, Maolin Zhou}
\thanks{{S. Liu}: Institute for Mathematical Sciences, Renmin University of China, Beijing 100872, China. Email: liushuangnqkg@ruc.edu.cn}
\thanks{{Y. Lou}: Department of Mathematics, Ohio State University, Columbus, OH 43210, USA. Email:  lou@math.ohio-state.edu}
\thanks{{R. Peng}: School of Mathematics and Statistics, Jiangsu Normal University,
Xuzhou,  Jiangsu 221116, China. Email:
pengrui\,$\b{}$\,seu@163.com}
\thanks{{M. Zhou}: Chern Institute of Mathematics and LPMC, Nankai University, Tianjin 300071, China. Email:
zhouml123@nankai.edu.cn}
\subjclass[2010]{Primary 35P15, 35P20; Secondary 35K10, 35B10.}
 \keywords{Time-periodic parabolic operator; principal eigenvalue; small diffusion;  asymptotics.}
\begin{document}
\maketitle

\begin{abstract}
We investigate the effect of  small diffusion on the principal eigenvalues
of linear time-periodic parabolic operators with  zero Neumann boundary conditions in
one dimensional space. The asymptotic behaviors
of the principal eigenvalues, as the diffusion coefficients tend to zero,
are established for non-degenerate and degenerate  spatial-temporally   varying environments.
 A new finding is the dependence of  these asymptotic behaviors  
on the periodic solutions of a specific ordinary differential equation induced by the drift.
The proofs are based upon delicate constructions of super/sub-solutions
and the applications of comparison principles.
\end{abstract}


\section{Introduction}\label{S1}
In this paper, we consider the following linear time-periodic parabolic eigenvalue problem  in one dimensional space:
\begin{equation}\label{SD_eq 1}
\begin{cases}
\smallskip
\partial_t\varphi-D\partial_{xx}\varphi-\partial_xm\partial_x\varphi+V\varphi=\lambda(D) \varphi &\text {in}\,\,(0,1)\times(0,T),\\
\smallskip
\partial_x\varphi(t,0)=\partial_x\varphi(1,t)=0 &\text {on}\,\,[0,T],\\
\varphi(x,0)=\varphi(x,T)  &\text{on}\,\,(0,1),
\end{cases}
\end{equation}
where  $D>0$ 
represents the diffusion rate, and the functions $m\in C^{2,1}([0,1]\times[0,T])$ and $V\in C([0,1]\times[0,T])$ are assumed to be  periodic in $t$ with  a common period $T$.

By the Krein-Rutman Theorem,   \eqref{SD_eq 1} admits a simple and real eigenvalue (called  \emph{principal  eigenvalue}), denoted by $\lambda(D)$, which corresponds to a positive eigenfunction (called
\emph{principal  eigenfunction}) and satisfies $\mathrm{Re}\lambda>\lambda(D)$ for any other eigenvalue $\lambda$ of  \eqref{SD_eq 1}; see Proposition 7.2 of \cite{Hess}.
The principal eigenvalue $\lambda(D)$ plays  a fundamental role in the study of reaction-diffusion equations  and systems in spatio-temporal  media, e.g. in the stability analysis for equilibria \cite{CC2003, CN2019, Hess,Hutson2001}.
Of particular interest is to understand
the dependence of $\lambda(D)$ on the parameters \cite{ LamLou2016,LL2019, Nadin2009, Nadin2011}.
The present paper continues our previous studies in \cite{LLPZ20191,LLPZ20192}  on the principal eigenvalues
for time-periodic parabolic operators, where  the dependence of $\lambda(D)$ on frequency and  advection rate were investigated. Our main goal here
is to establish the asymptotic behavior of $\lambda(D)$ as the diffusion rate $D$ tends to zero.



For notational convenience, given any  $T$-periodic function  $p(x,t)$,  we define 
$$\hat{p}(x):=\frac{1}{T}\int_{0}^{T}p(x,s)\,\mathrm{d}s \quad \text{ and }\quad p_+(x,t):=\max\left\{p(x,t),0\right\},$$
and redefine $\partial_{xx}\hat{m}(0)$ and $\partial_{xx}\hat{m}(1)$  via
 \begin{equation}\label{boundarycondition}
  \partial_{xx}\hat{m}(0)=\begin{cases}
-\infty & \text{if }\partial_x\hat{m}(0^+)<0,\\
\infty & \text{if }\partial_x\hat{m}(0^+)>0,
\end{cases}\quad\text{and}\quad
\partial_{xx}\hat{m}(1)=\begin{cases}
\infty & \text{if }\partial_x\hat{m}(1^-)<0,\\
-\infty & \text{if }\partial_x\hat{m}(1^-)>0.
\end{cases}
 \end{equation}


For the case when $V$ and $\partial_xm$ depend upon the space variable alone, i.e.
$V(x,t)=V(x)$ and $\partial_xm(x,t)=m'(x)$,
problem \eqref{SD_eq 1} reduces to
the following elliptic eigenvalue problem:
\begin{equation}\label{elliptic}
 \begin{cases}
 \smallskip
-D\varphi''-  m'(x)\varphi'+V(x)\varphi=\lambda(D)\varphi\ \ &\text{in}\,\,(0,1),\\
\varphi'(0)=\varphi'(1)=0.
\end{cases}
 \end{equation}
 This sort of advection-diffusion operator in \eqref{elliptic}
 with small diffusion can be regarded as a singular perturbation of the corresponding first order operator \cite{W1975}, and was 
 studied  in \cite{FW1984} by the large deviation approach. Therein, the limit of the principal eigenvalue $\lambda(D)$ as $D\to0$ 
 plays a pivotal  role in studying the large time behavior of the trajectories of stochastic systems; see also \cite{DEF1974, F1973}. 
 Recently the asymptotic behavior of $\lambda(D)$ for problem \eqref{elliptic} has been
 considered in 
 \cite{CL2012} for general bounded domains, and their result in  particular implies
 \begin{Thm}\cite{CL2012}\label{independentt}
 Assume $V(x,t)=V(x)$ and $\partial_xm(x,t)=m'(x)$.
 Suppose that $ m'(0)\neq0$, $ m'(1)\neq0$, and all critical points of $m$ are non-degenerate. 
 Then
$$\lim_{D\rightarrow 0}\lambda(D)=
\min_{x\in\Sigma\cup\{0,1\}}\left\{V\left(x\right)+[m'']_+\left(x\right) \right\},
$$
where $\Sigma:=\{x\in(0,1):m'(x)=0\}$ and $m''(0),m''(1)$ are defined by \eqref{boundarycondition}.
 \end{Thm}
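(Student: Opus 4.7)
The plan is to rewrite \eqref{elliptic} as a self-adjoint eigenvalue problem via the integrating factor $e^{m/D}$, and then to analyze the $D\to 0$ limit by localization near the points of $\Sigma\cup\{0,1\}$. Writing $L\varphi := -D\varphi''-m'\varphi'+V\varphi$ in the divergence form $L\varphi = -De^{-m/D}\partial_x(e^{m/D}\varphi')+V\varphi$, then multiplying the eigenvalue equation by $\varphi e^{m/D}$ and integrating by parts against the Neumann boundary condition, yields the weighted Rayleigh characterization
$$
\lambda(D)=\inf_{\varphi\not\equiv 0}\frac{D\int_0^1 e^{m/D}(\varphi')^2\,dx+\int_0^1 V e^{m/D}\varphi^2\,dx}{\int_0^1 e^{m/D}\varphi^2\,dx}.
$$
As $D\to 0$ the weight $e^{m/D}$ forces mass concentration near a single point of $\Sigma\cup\{0,1\}$; the remainder of the proof shows that each such $y$ can be hit with Rayleigh quotient tending to $V(y)+[m'']_+(y)$ (upper bound), and that no smaller value is attainable (lower bound).

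For the upper bound, for each $y\in\Sigma\cup\{0,1\}$ with $V(y)+[m'']_+(y)<\infty$, I would construct a trial function $\varphi_y$ whose Rayleigh quotient converges to $V(y)+[m'']_+(y)$. When $y$ is a local maximum of $m$ (either $y\in\Sigma$ with $m''(y)<0$, or $y\in\{0,1\}$ in the signed cases of \eqref{boundarycondition}), take $\varphi_y$ to be a smooth nonnegative cutoff identically $1$ on a small neighbourhood of $y$: Laplace's method at the interior or boundary maximum of $m$ yields quotient $\to V(y)$, while $D\int e^{m/D}(\varphi_y')^2$ carries an exponentially small factor $e^{-c/D}$ because $\varphi_y'$ vanishes exactly where the weight is largest. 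When $y\in\Sigma$ is a local minimum of $m$, take instead $\varphi_y(x)=e^{-(m(x)-m(y))/D}\chi(x-y)$ for a fixed smooth cutoff $\chi$ equal to $1$ near $0$, so that $\varphi_y^2 e^{m/D}=e^{(2m(y)-m(x))/D}\chi^2$ becomes a genuine Gaussian of width $\sqrt{D/m''(y)}$ centred at $y$; an explicit Gaussian-moment computation yields $\frac{D\int(\varphi_y')^2 e^{m/D}\,dx}{\int\varphi_y^2 e^{m/D}\,dx}\to m''(y)$, and therefore Rayleigh quotient $\to V(y)+m''(y)=V(y)+[m'']_+(y)$. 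Taking the minimum over $y$ gives the desired upper bound on $\limsup_{D\to 0}\lambda(D)$.

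For the lower bound I would conjugate the problem to a self-adjoint Schr\"odinger form via $\varphi=u\,e^{-m/(2D)}$, transforming \eqref{elliptic} into
$$
-Du''+\Bigl(\tfrac{(m'(x))^2}{4D}+\tfrac{m''(x)}{2}+V(x)\Bigr)u=\lambda u \quad\text{on}\ (0,1),
$$
with Robin boundary data $u'(0)=\tfrac{m'(0)}{2D}u(0)$ and $u'(1)=\tfrac{m'(1)}{2D}u(1)$. The confining term $(m')^2/(4D)$ blows up uniformly off any neighbourhood of $\Sigma$, so an IMS-type partition of unity $\{\chi_j^2\}$ subordinate to small neighbourhoods of the points of $\Sigma\cup\{0,1\}$ forces any candidate minimiser to concentrate in one well. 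Rescaling $x-y=\sqrt{D}\,s$ at an interior $y\in\Sigma$ converts that well into the harmonic oscillator $-\partial_s^2+\tfrac{(m''(y))^2 s^2}{4}$, whose ground-state energy $|m''(y)|/2$ combines with the constant contributions $m''(y)/2+V(y)$ from the remaining terms to give exactly $V(y)+[m'']_+(y)$. Boundary wells are handled by the analogous analysis on a half-line with the corresponding Robin data, producing either the predicted eigenvalue or, in the repulsive case, no bound state at all.

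The main obstacle is the consistent treatment of the boundary wells. At $x=0$ with $m'(0)<0$, the Robin coefficient $m'(0)/(2D)\to-\infty$ creates a strongly attractive boundary, and a naive exponential ansatz $u\sim e^{-|m'(0)|x/(2D)}$ yields the incorrect shifted eigenvalue $V(0)+m''(0)/2$; the correct analysis must incorporate the quadratic contribution of $m$ near the boundary, effectively replacing the ansatz by $u\propto e^{m/(2D)}$, so that the boundary eigenvalue reduces to $V(0)=V(0)+[m'']_+(0)$ as prescribed by \eqref{boundarycondition}. Conversely, when $m'(0)>0$ the Robin term is strongly repulsive and must be shown to behave asymptotically like a Dirichlet condition, justifying the exclusion $[m'']_+(0)=+\infty$. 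Threading these half-line asymptotics consistently through the IMS decomposition, together with quantifying the Laplace-method errors from the upper-bound construction, constitutes the central technical work.
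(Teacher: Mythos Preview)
Your proposal is essentially correct as a sketch, and the variational/semiclassical route you outline---weighted Rayleigh quotient, Gaussian trial functions at minima, Schr\"odinger conjugation with harmonic-oscillator analysis in each well---is indeed the approach of the original reference \cite{CL2012} from which Theorem~\ref{independentt} is quoted. The present paper, however, does not prove Theorem~\ref{independentt} directly at all: it is stated as a known result, and is recovered only as the time-independent special case of Theorem~\ref{sdthmP} (equivalently Theorem~\ref{thm1.1} with $\mathbf{B}=\emptyset$).

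The paper's route to that result is genuinely different from yours. It makes no use of the variational structure; instead it constructs explicit super- and sub-solutions piece by piece (Propositions~\ref{sdlem1} and \ref{sdlem2}, Lemma~\ref{sdlem5}) and appeals to the generalized comparison principle of Proposition~\ref{appendixprop}. Your method exploits self-adjointness, which---as the paper itself stresses in the first bullet of its proof outline---is unavailable for the time-periodic parabolic operator that is the paper's actual target. The super/sub-solution machinery is therefore more robust and is what allows the extension to Theorems~\ref{sdthmP}--\ref{sdthm4}, at the price of far more intricate constructions (witness the seven-step, multi-layer super-solution in Part~II of Proposition~\ref{sdlem2}) than your clean Gaussian trial functions. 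In short: your argument is the natural one for the elliptic statement in isolation and matches the cited source; the paper's argument is engineered from the start to survive the loss of variational structure in the parabolic generalization.
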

 We refer to \cite{PZZ2018} for recent progress on problem \eqref{elliptic} under general boundary conditions.

 Theorem \ref{independentt} indicates that the limit of $\lambda(D)$ relies  upon   
 the set of critical points of function $m$
 in the elliptic scenario. Turning to the time-periodic parabolic case  where $m$  depends on
 both spatial and temporal variables,  it seems reasonable to anticipate that  the limit of $\lambda(D)$ will be associated to the curves $x(t)$ satisfying $\partial_xm(x(t),t)=0$. This is indeed the case for  the limit of the principal eigenvalue
 with large advection, and we refer to Theorem 1.1 in \cite{LLPZ20192} for further details.
 However, it turns out that this is generally not true while considering the limit
 of $\lambda(D)$ as $D$ tends to zero. Instead, the asymptotic behavior of $\lambda(D)$
 depends heavily on the periodic solutions of the following  ordinary differential equation:
  \begin{equation}\label{eq:P}
  \begin{cases}
  \smallskip
\dot{P}(t)=-\partial_xm\left(P(t),t\right),\\
P(t)=P(t+T).
 \end{cases}
 \end{equation}
 More specifically, our main result can be stated as follows.

\begin{Thm}\label{sdthmP}
Assume that $\partial_xm(0,t)\neq0$  and $\partial_xm(1,t)\neq0$  for all $t\in[0,T]$. Let $\partial_{xx}\hat{m}(0)$ and $\partial_{xx}\hat{m}(1)$ be defined by \eqref{boundarycondition}.\smallskip
\\
  \noindent{\rm(i)} If \eqref{eq:P} has at least one but finite many 
  $T$-periodic solutions,
  denoted by $\{P_i(t)\}_{i=1}^{N}$,  satisfying  
$0<P_1(t)<\ldots<P_N(t)<1$, and $\partial_{xx}m\left(P_i(t),t\right)\neq 0$ for $1\leq i\leq N$ and $t\in[0,T]$, 
then
$$\lim_{D\rightarrow 0}\lambda(D)=
\min_{0\leq i\leq N+1}\left\{\frac{1}{T}\int_0^T \Big[V\left(P_i(s),s\right)+[\partial_{xx}m]_+\left(P_i(s),s\right)\Big]\mathrm{d}s \right\}, 
$$
 where $P_0(t)\equiv 0$ and $P_{N+1}(t)\equiv 1$;

 \noindent{\rm(ii)} If \eqref{eq:P} has no periodic solutions,
 then
  $$\lim_{D\rightarrow 0}\lambda(D)=
\min\Big\{ \hat V(0)+[\partial_{xx}\hat{m}]_+\left(0\right),\,\,   \hat V(1)+[\partial_{xx}\hat{m}]_+\left(1\right) \Big\}.
$$
\end{Thm}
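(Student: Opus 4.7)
The plan is to prove upper and lower bounds for $\lim_{D\to 0}\lambda(D)$ matching $\min_i A_i$, where $A_i:=\frac{1}{T}\int_0^T[V(P_i(s),s)+[\partial_{xx}m]_+(P_i(s),s)]\,\mathrm{d}s$ for $i\in\{0,1,\ldots,N+1\}$. Both bounds rest on the Collatz--Wielandt characterization of the principal eigenvalue,
\[
\lambda(D)=\inf\bigl\{\mu\in\mathbb{R}:\exists\,\varphi>0,\ T\text{-periodic and Neumann-admissible, with }L\varphi\leq\mu\varphi\bigr\},
\]
and the dual formula with the inequality reversed, where $L=\partial_t-D\partial_{xx}-\partial_xm\,\partial_x+V$. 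The key structural input is that the $T$-periodic orbits of the characteristic ODE~\eqref{eq:P} are the only loci along which the principal eigenfunction can concentrate as $D\to 0$; the boundary trajectories $P_0\equiv 0$ and $P_{N+1}\equiv 1$ play the same role by virtue of the Neumann condition and the convention~\eqref{boundarycondition}.

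\textbf{Upper bound via sub-solutions.}
For each $i$ and each $\epsilon>0$, I construct a positive $T$-periodic sub-solution $\varphi_i^D$ of the form
\[
\varphi_i^D(x,t)=e^{-q_i(t)(x-P_i(t))^2/D-r_i(t)}\chi_i(x-P_i(t))+\eta,
\]
with $\chi_i$ a smooth cut-off localizing near $P_i(t)$, $\eta>0$ small (ensuring both strict positivity and the one-sided Neumann inequality at the endpoints), and $(q_i,r_i)$ periodic scalar coefficients. Expanding $L\varphi_i^D/\varphi_i^D$ near $P_i(t)$ and matching the coefficient of $(x-P_i)^2/D$ identifies the Riccati ODE $\dot q_i=-4q_i^2+2q_i\,\partial_{xx}m(P_i(t),t)$, while matching the zeroth-order terms determines $\dot r_i=V(P_i,t)+2q_i-(A_i+\epsilon)$. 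The emergence of the positive part $[\partial_{xx}m]_+$ in $A_i$ is that the tight-Gaussian regime is sustainable only on the $t$-phases where $\partial_{xx}m(P_i,t)>0$; on the complementary phases $q_i$ must relax toward $0$, the profile flattens, and the time-averaged concentration cost picks up $0$ rather than the negative value of $\partial_{xx}m$. A careful piecewise construction of $q_i$ with smooth interpolation, together with a cut-off analysis dominating the far-field (where $L\varphi_i^D/\varphi_i^D\approx V$) by adjusting $\eta$, yields $\limsup_{D\to 0}\lambda(D)\leq A_i$. For $i=0,N+1$ the Gaussian ansatz is replaced by an exponential boundary layer $e^{-\alpha_i(t)\,d_i(x)/D}$, with $d_i$ the distance to the relevant endpoint; the hypothesis $\partial_xm(0,t),\partial_xm(1,t)\ne 0$ and~\eqref{boundarycondition} pin down the correct rate.

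\textbf{Lower bound and main obstacle.}
For the matching lower bound I argue by contradiction: assume $\lambda(D_n)\to c<\min_iA_i$ along a subsequence, and let $\varphi_n>0$ be the corresponding principal eigenfunctions normalized by $\|\varphi_n\|_\infty=1$. Set $\Psi_n:=-D_n\log\varphi_n$; passing to a half-relaxed viscosity limit $\Psi_*$, this function is Lipschitz and is a viscosity solution of the Hamilton--Jacobi equation $\partial_t\Psi_*+|\partial_x\Psi_*|^2-\partial_xm\,\partial_x\Psi_*=V-c$ with Neumann-type boundary data. The zero set of $\Psi_*$ is invariant under the characteristic flow $\dot x=-\partial_xm(x,t)$ and, by $T$-periodicity and a Poincar\'e recurrence/flow-box argument, must contain one of the orbits $P_i$ or a boundary trajectory. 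A local expansion of $\varphi_n$ along this orbit, matched against the Riccati analysis of the upper-bound step, forces $c\geq A_i$, contradicting $c<\min_iA_i$.
The principal technical obstacle is precisely the mechanism producing $[\partial_{xx}m]_+$ rather than $\partial_{xx}m$: in phases where $\partial_{xx}m(P_i,t)>0$ the characteristic flow is locally contracting and the Gaussian profile can be maintained at the cost of the positive Riccati contribution $2q_i$, while in phases where $\partial_{xx}m(P_i,t)<0$ the flow is locally expanding and no globally valid tight Gaussian can be sustained, so the profile must spread and no concentration cost is incurred. Reconciling these two regimes within a single globally valid sub-/super-solution, producing in particular the averaged identity $\overline{2q_i}=\overline{[\partial_{xx}m]_+(P_i,\cdot)}$ that makes $r_i$ periodic, is the delicate technical core of the argument. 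Case (ii) is treated analogously, using the fact that without interior $T$-periodic orbits of~\eqref{eq:P} every characteristic reaches $\{0,1\}$ in finite time, so the minimization reduces to $A_0,A_{N+1}$.
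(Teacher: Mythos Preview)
Your approach differs substantially from the paper's, which uses no WKB/viscosity methods: it first straightens the orbits $P_i$ to constants via a change of variables $\Psi$, then applies a second (isochron-based) transformation $\Phi$ built from trajectories of \eqref{eq:P} to force the transformed drift to have definite sign on each inter-orbit interval, thereby reducing to Theorem~\ref{thm1.1}, which in turn is proved by lengthy explicit piecewise super/sub-solution constructions together with Lemma~\ref{wholespaceeigenvalue0}.

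Your lower bound has a genuine gap. With $\Psi_n=-D_n\log\varphi_n$ the equation reads
\[
\partial_t\Psi_n+(\partial_x\Psi_n)^2-\partial_xm\,\partial_x\Psi_n=D_n\,\partial_{xx}\Psi_n+D_n\bigl(V-\lambda(D_n)\bigr),
\]
so the viscosity limit is $\partial_t\Psi_*+(\partial_x\Psi_*)^2-\partial_xm\,\partial_x\Psi_*=0$, \emph{without} the $V-c$ term; if instead $\Psi_n=-\log\varphi_n$, the quadratic term carries a factor $D_n$ and vanishes in the limit, leaving only the linear transport equation. In no single scaling do the Hamiltonian term and $V-c$ survive together, so $c\ge A_i$ cannot be read off from $\Psi_*$ alone. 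What is actually required is a two-scale analysis: the outer limit locates the concentration set among the $P_i$, and then a blow-up at scale $\sqrt{D}$ around $P_i$ yields a whole-line periodic eigenvalue problem whose value is $\widehat{[\partial_{xx}m]_+(P_i,\cdot)}$---this is exactly the content of Lemma~\ref{wholespaceeigenvalue0}, and your phrase ``local expansion\ldots matched against the Riccati analysis'' hides all of it. Separately, you misidentify the origin of the positive part: the hypothesis $\partial_{xx}m(P_i(t),t)\ne 0$ forces this quantity to have \emph{fixed} sign in $t$ for each $i$, so there are no ``complementary $t$-phases'' along a single orbit. The dichotomy is between orbits: when $\partial_{xx}m(P_i,\cdot)<0$ (attracting) the Riccati equation $\dot q=-4q^2+2q\,\partial_{xx}m(P_i,t)$ has no positive periodic solution and a flat sub-solution gives cost $\hat V(P_i)$; when $\partial_{xx}m(P_i,\cdot)>0$ (repelling) the positive periodic Riccati solution satisfies $2\hat q=\widehat{\partial_{xx}m(P_i,\cdot)}$ and the Gaussian sub-solution gives $\hat V(P_i)+\widehat{\partial_{xx}m(P_i,\cdot)}$.
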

If $V$ and $m$ are independent of time,  all solutions of \eqref{eq:P} are constants which correspond to the critical points of function $m$, and part {\rm(i)} of Theorem \ref{sdthmP} is reduced to Theorem \ref{independentt}.
When $m(x,t)$ is monotone in $x$, part {\rm(ii)} of Theorem \ref{sdthmP} was first established in \cite{PZ2015}.

One potential application of Theorem \ref{sdthmP} is the  study of large-time behaviours of solutions to the Cauchy problem for singularly perturbed parabolic equations in  spatio-temporal  media \cite{BH1984,FM2018,Hess}, in which the growth or decay rate of the solutions can be described in terms of  $\lambda(D)$.
In a very recent work \cite{FM2019}, the asymptotics of $\lambda(D)$ for small $D$  was considered in a  case of underlying advection $\partial_x m$ being a constant, when analyzing the effect of small mutations on phenotypically-structured populations in a shifting and fluctuating environment.

The restriction $\partial_{xx}m\left(P(t),t\right)\neq 0$
in Theorem \ref{sdthmP}, in fact guarantees the non-degeneracy of advection $\partial_x m$ along  periodic solution $P$ of \eqref{eq:P}. See \cite{CL2008,LLPZ20192} for the definitions of degeneracy and non-degeneracy. To complement Theorem \ref{sdthmP},  we consider
a type of degenerate advection 
in the following result:
\begin{Thm}\label{thm1.1}
 Suppose that for each $1\leq i\leq N$, $\partial_x m(\kappa_i, t)\equiv0$ for all $t\in[0,T]$, and
 $0<\kappa_1<\cdots<\kappa_{N}<1$. Furthermore, assume that $\{i:0\leq i\leq N\}=\mathbf{A}\cup\mathbf{B}$, where
 \begin{itemize}
  \item [] $\mathbf{A}=\big\{i:\ \ 0\leq i\leq N,\,\, \partial_xm(x,t)\neq0,\ \,
      (x,t)\in(\kappa_{i},\kappa_{i+1})\times[0,T]\big\};$
  \item [] $\mathbf{B}=\big\{i:\ \ 0\leq i\leq N,\,\,\partial_x m(x,t)\equiv0,\ \,
      (x,t)\in[\kappa_{i},\kappa_{i+1}]\times[0,T]\big\},$
\end{itemize}
 where $\kappa_0=0$ and $\kappa_{N+1}=1$.
Then we have
\begin{equation}\label{limitlambdaD}
  \lim_{D\rightarrow 0}\lambda(D)=\min\left\{\min_{0\leq i\leq N+1}\left\{\hat{V}(\kappa_i)+[\partial_{xx}\hat{m}]_+(\kappa_i)\right\},\,\min_{i\in\mathbf{B}}\left\{\min_{x\in[\kappa_i,\kappa_{i+1}]}\hat{V}(x)\right\}\right\},
\end{equation}
where  $\partial_{xx}\hat{m}(0)$ and $\partial_{xx}\hat{m}(1)$ are defined by \eqref{boundarycondition}.
\end{Thm}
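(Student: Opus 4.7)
The plan is to establish the limit in \eqref{limitlambdaD} by matching upper and lower bounds on $\lambda(D)$, using the standard comparison characterizations
\[
\lambda(D) = \inf\{\mu: \exists \varphi > 0\ T\text{-periodic with } L_D\varphi \leq \mu\varphi\} = \sup\{\mu: \exists \varphi > 0\ T\text{-periodic with } L_D\varphi \geq \mu\varphi\},
\]
where $L_D\varphi := \partial_t\varphi - D\partial_{xx}\varphi - \partial_x m\partial_x\varphi + V\varphi$ and the trial functions satisfy the zero Neumann condition at $x=0,1$. The structural observation underlying the argument is that on each subinterval $[\kappa_i,\kappa_{i+1}]$ with $i\in\mathbf{B}$, the drift $\partial_x m$ vanishes identically, so \emph{every} point there is a constant $T$-periodic solution of \eqref{eq:P} and moreover $\partial_{xx}m\equiv 0$; hence candidate points inside such a subinterval contribute only $\hat V$. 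At each $\kappa_i$ we recover the Theorem \ref{sdthmP}(i)-type contribution $\hat V(\kappa_i)+[\partial_{xx}\hat m]_+(\kappa_i)$, with $\partial_{xx}\hat m(\kappa_i)$ interpreted via one-sided limits (or via \eqref{boundarycondition} when $\kappa_i\in\{0,1\}$).

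For the upper bound, I localize around each candidate minimizer. If the minimum is attained at $x^\star=\kappa_i$, then $\kappa_i$ is a stationary $T$-periodic solution of \eqref{eq:P} and the Gaussian-type trial function from the proof of Theorem \ref{sdthmP}(i) with $P(t)\equiv\kappa_i$ applies directly, yielding an admissible $\varphi>0$ with $L_D\varphi/\varphi \le \hat V(\kappa_i)+[\partial_{xx}\hat m]_+(\kappa_i)+o(1)$ uniformly on its support. If instead $x^\star\in[\kappa_i,\kappa_{i+1}]$ for some $i\in\mathbf{B}$, the absence of drift permits a much simpler trial function
\[
\varphi(x,t) = \chi_\delta(x-x^\star)\,\exp\!\left(\int_0^t\!\big[\hat V(x^\star)-V(x^\star,s)\big]\,\mathrm{d}s\right),
\]
where $\chi_\delta$ is a positive $C^2$ function on $[0,1]$ compatible with Neumann data and concentrated near $x^\star$ at scale $\delta$. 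The exponential factor is $T$-periodic since its integrand has zero mean, and a direct computation yields $L_D\varphi/\varphi \le \hat V(x^\star)+O(\delta)+O(D\delta^{-2})$ on the support of $\chi_\delta$; choosing $\delta\to 0$ with $D\delta^{-2}\to 0$ (say $\delta=D^{1/3}$) delivers the upper bound.

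For the lower bound, I construct, for arbitrary $\varepsilon>0$, a global positive $T$-periodic function $\psi_D$ satisfying $L_D\psi_D \ge (\lambda^\star-\varepsilon)\psi_D$, where $\lambda^\star$ denotes the right-hand side of \eqref{limitlambdaD}. I assemble $\psi_D$ piecewise from local super-solutions of $L_D-(\lambda^\star-\varepsilon)$ on each subinterval: on an $\mathbf{A}$-subinterval, arguments paralleling the proof of Theorem \ref{sdthmP}(ii) (but with the endpoints treated as stationary periodic solutions of \eqref{eq:P} where the drift vanishes) furnish a positive local super-solution whose eigenvalue lower bound is $\min\bigl\{\hat V(\kappa_i)+[\partial_{xx}\hat m]_+(\kappa_i),\,\hat V(\kappa_{i+1})+[\partial_{xx}\hat m]_+(\kappa_{i+1})\bigr\}-\varepsilon$. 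On a $\mathbf{B}$-subinterval, the restricted operator reduces to $\partial_t-D\partial_{xx}+V$, whose small-$D$ principal eigenvalue converges to $\min_{[\kappa_i,\kappa_{i+1}]}\hat V$; a WKB-type Gaussian concentration at the minimizer of $\hat V$ supplies the corresponding local super-solution. These local pieces are then glued into a single global $\psi_D$ by a pointwise minimum, which preserves the super-solution inequality for the linear operator $L_D-(\lambda^\star-\varepsilon)$ in the viscosity sense.

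The main technical obstacle will be the gluing across interior $\kappa_i$'s that separate $\mathbf{A}$- from $\mathbf{B}$-subintervals: on the $\mathbf{A}$-side the super-solution carries a gauge factor of the form $e^{(m-\hat m)/D}$ together with a Gaussian envelope, both highly singular as $D\to 0$, while on the $\mathbf{B}$-side the super-solution is essentially flat in $x$ (or localized at a point perhaps far from the interface). Reconciling these profiles into one positive $T$-periodic $\psi_D$ on which the super-solution inequality holds distributionally, while correctly realizing the one-sided interpretation of $\partial_{xx}\hat m(\kappa_i)$, demands sharp quantitative estimates on the decay of the $\mathbf{A}$-side super-solution near the interface together with a careful matching argument; this is where I expect the delicate core of the proof to lie.
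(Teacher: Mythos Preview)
Your upper-bound sketch is in line with the paper's Step~1. The lower-bound plan, however, has two genuine gaps.

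First, on a $\mathbf{B}$-subinterval a ``Gaussian concentration at the minimizer of $\hat V$'' is a \emph{sub}-solution, not a super-solution: with no drift, $-D\partial_{xx}\psi/\psi = D\sigma^{-2} - D\sigma^{-4}(x-x^\star)^2$ becomes large and negative away from $x^\star$, so $L_D\psi\ge(\lambda^\star-\varepsilon)\psi$ fails there. The paper's super-solution on a $\mathbf{B}$-interval (Lemma~\ref{sdlemdegen}) is instead the \emph{non}-concentrated function $\exp\!\big[-\int_0^t V_\varepsilon(x,s)\,\mathrm{d}s + t\hat V_\varepsilon(x)\big]\beta_\varepsilon(x)$, which yields $L_D\psi/\psi\ge \hat V_\varepsilon(x)-\varepsilon-O(D)\ge\min\hat V-2\varepsilon$ pointwise.

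Second, the pointwise-minimum gluing does not fit the available comparison principle (Proposition~\ref{appendixprop}), which requires $C^2$-regularity away from finitely many \emph{vertical} slices $\{\kappa\}\times[0,T]$ with the one-sided derivative ordering $\partial_x\overline\varphi(\kappa^+,t)<\partial_x\overline\varphi(\kappa^-,t)$; a minimum of two smooth pieces produces kinks along a $t$-dependent curve. More substantively, you correctly locate the hard point at the $\mathbf{A}$-side of an interface but underestimate it: near an unstable $\kappa_i$ the super-solution built from the rescaled profile $\overline\psi\big((x-\kappa_i)/\sqrt{D},t\big)$ has $\partial_x\log\overline\varphi$ of order $D^{-(1-\ell/2)}$ at distance $\delta$, whereas the $\mathbf{B}$-side (or the far field on an $\mathbf{A}$-interval) needs bounded derivatives. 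The paper does not take a minimum; it performs an explicit \emph{iterated} transition (Steps~4--7 of Proposition~\ref{sdlem2}) across $n_\ast\sim\log_2(1/\ell)$ buffer layers of width $\delta$, driving the scale of $\partial_x\log\overline\varphi$ down through $D^{-(1-\ell/2)},\,D^{-(1-\ell)},\,D^{-(1-2\ell)},\ldots,D^0$, using the drift term $-\partial_x m\,\partial_x\overline\varphi$ to dominate the diffusive error at each stage. Only after this reduction can the piece be matched to the bounded-derivative profile near the boundary or the adjacent $\mathbf{B}$-interval. This iterative scale-matching is the technical core of the lower bound and is not captured by a minimum-of-super-solutions argument.
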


The main contribution of Theorem \ref{thm1.1} is to allow  $\mathbf{B}\neq\emptyset$, i.e. the spatial-temporal  degeneracy of function $\partial_x m$.
When $\mathbf{B}=\emptyset$, which means $\partial_xm(x,t)\neq0$ for all $x\neq \kappa_i, 0\leq i\leq N+1$,  all solutions of \eqref{eq:P} are nothing but constant solutions $P\equiv\kappa_i, 1\leq i\leq N$, and consequently, Theorem \ref{thm1.1} becomes a special case of Theorem \ref{sdthmP} when $\mathbf{B}=\emptyset$.

The assumption $i\in \mathbf{A}$  implies there are no periodic solutions of \eqref{eq:P} in $[\kappa_i,\kappa_{i+1}]\times[0,T]$ except for constant solutions $P\equiv\kappa_i$ and $P\equiv\kappa_{i+1}$. Without this assumption, the situation becomes
even more complicated. 
To illustrate the complexity, we consider the special case
$m(x,t)=\alpha b(t)x$ as in \cite{LLPZ20192}, 
where $\alpha>0$ denotes the advection rate, and the $T$-periodic function $b$ is  Lipschitz continuous. 
In this case, problem \eqref{SD_eq 1} becomes
\begin{equation}\label{SD_eq 4}
\begin{cases}
\partial_t\varphi-D\partial_{xx}\varphi- \alpha b(t)\partial_x\varphi+V\varphi=\lambda(D) \varphi & \text{in } (0,1)\times[0,T],\\
\partial_x\varphi(0,t)=\partial_x\varphi(1,t)=0 &\text{on }[0,T],\\
\varphi(x,0)=\varphi(x,T) &\text{on } (0,1).
\end{cases}
\end{equation}

For different $\alpha$ and $b$,  we have the following result:

\begin{Thm}\label{sdthm4}
Let $\lambda(D)$ denote the principal eigenvalue of \eqref{SD_eq 4}. \smallskip \\
   \noindent{\rm(i)} If $\hat{b}\neq0$, then for all $\alpha>0$,
  $$\lim\limits_{D\rightarrow0}\lambda(D)=\begin{cases}
  \hat{V}(1) &\text{for }\,\,\hat{b}>0,\\
  \hat{V}(0) &\text{for }\,\,\hat{b}<0;
  \end{cases}$$

 \noindent{\rm(ii)} If $\hat{b}=0$, set $P(t)=-\int_{0}^{t}b(s)\mathrm{d}s$,
  $\overline{P}=\max_{[0,T]}P$, and $\underline{P}=\min_{[0,T]}P.$
      Then
      $$\lim\limits_{D\rightarrow0}\lambda(D)=\begin{cases}
      \medskip
      \min\limits_{y\in[-\alpha\underline{P}, \, 1-\alpha\overline{P}]}\left\{\frac{1}{T}\int_{0}^{T}V(\alpha P(s)+y,s)\mathrm{d}s\right\}, &0<\alpha\leq \frac{1}{\overline{P}-\underline{P}},\\
      \frac{1}{T}\int_{0}^{T}V( \tilde{P}_\alpha(s),s)\mathrm{d}s, &\alpha>\frac{1}{\overline{P}-\underline{P}},
      \end{cases}
$$
       where $\tilde{P}_\alpha\in C([0,T];[0,1])$ is the unique $T$-periodic solution  of $\dot{\tilde{P}}(t)=-\alpha F(\tilde{P}(t),t)$ in $[0,1]$, and $F$ 
       is given  by
 \begin{equation}\label{definition_F}
 F(x,t)=\begin{cases}
  b(t)  &0<x<1, \, t\in [0, T],
  \\
  \min\{b(t), 0\},  &x=0, \, t\in [0, T],
  \\
  \max\{b(t), 0\},   &x=1, \, t\in [0, T].
  \end{cases}
  \end{equation}
\end{Thm}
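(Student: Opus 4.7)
The plan is to treat the three cases separately, exploiting the fact that the drift $-\alpha b(t)\partial_x$ in \eqref{SD_eq 4} depends only on $t$, so the ODE \eqref{eq:P} can be solved by direct integration and the change of variables $\xi = x-\alpha P(t)$ (with $P$ the paper's primitive) will eliminate the first-order term.

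\textbf{Part (i).} The explicit solution of $\dot Q=-\alpha b(t)$ is $Q(t)=Q(0)-\alpha\int_0^t b(s)\,\mathrm{d}s$, which satisfies $Q(T)=Q(0)-\alpha T\hat b$; thus \eqref{eq:P} admits no $T$-periodic solution when $\hat b\neq 0$, placing us in the regime of Theorem \ref{sdthmP}(ii). Since $\hat m(x)=\alpha\hat b\,x$, the convention \eqref{boundarycondition} yields $[\partial_{xx}\hat m]_+(0)=+\infty$ and $[\partial_{xx}\hat m]_+(1)=0$ when $\hat b>0$ (symmetrically when $\hat b<0$), so that Theorem \ref{sdthmP}(ii) simplifies exactly to the claimed $\hat V(1)$ (resp.\ $\hat V(0)$). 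The sole technical wrinkle is that Theorem \ref{sdthmP} requires $\partial_xm(0,t),\partial_xm(1,t)\neq 0$ for all $t$, which fails if $b$ has zeros. I would repair this by applying the theorem to the perturbed coefficient $b_\varepsilon:=b+\varepsilon\,\mathrm{sgn}(\hat b)$ (so that $b_\varepsilon$ is of definite sign near zeros of $b$ and $\hat b_\varepsilon$ keeps the sign of $\hat b$), then passing to $\varepsilon\to 0$ using the uniform-in-$D$ continuity of $\lambda(D)$ in the coefficient $b$ (a standard consequence of the comparison principle).

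\textbf{Part (ii), case $0<\alpha\le 1/(\overline P-\underline P)$.} The change of variables $\xi=x-\alpha P(t)$ cancels the advection term because $\dot P=-b(t)$, transforming \eqref{SD_eq 4} into
\begin{equation*}
\partial_t u-D\partial_{\xi\xi}u+W(\xi,t)\,u=\lambda(D)\,u,\qquad W(\xi,t):=V(\xi+\alpha P(t),t),
\end{equation*}
on the moving domain $\xi\in[-\alpha P(t),\,1-\alpha P(t)]$ with zero-Neumann boundary conditions. The intersection of these moving intervals over $t\in[0,T]$ is exactly $[-\alpha\underline P,\,1-\alpha\overline P]$, non-empty by hypothesis, and for each $y$ in this intersection the curve $t\mapsto\alpha P(t)+y$ is a $T$-periodic solution of \eqref{eq:P} lying in $(0,1)$. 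For the upper bound I would build a subsolution $\psi(x,t)=\eta(t)\rho_D\bigl(x-\alpha P(t)-y^{\ast}\bigr)$ concentrated along the optimal trajectory, where $\rho_D$ is a Gaussian of width $\sqrt D\,L(D)$ with $L(D)\to\infty$ slowly, $y^\ast$ minimizes $y\mapsto\tfrac{1}{T}\int_0^T W(y,s)\,\mathrm{d}s$, and $\eta>0$ is $T$-periodic satisfying $\dot\eta/\eta=\overline\mu-W(y^\ast,t)$ with $\overline\mu=\tfrac{1}{T}\int_0^T W(y^\ast,s)\,\mathrm{d}s$; the advection cancellation makes the subsolution inequality hold up to an $o(1)$ error. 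For the matching lower bound I would use an Agmon-type exponentially weighted supersolution in the $\xi$-variable that forces concentration near $y^\ast$, exploiting the self-adjointness of $-D\partial_{\xi\xi}+W(\xi,t)$ in $\xi$.

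\textbf{Part (ii), case $\alpha>1/(\overline P-\underline P)$.} Now the common intersection is empty, so no single shift embeds the moving domain into a fixed window. The replacement is the reflected ODE $\dot{\tilde P}=-\alpha F(\tilde P,t)$: $\tilde P$ follows $-\alpha b(t)$ in the interior, sticks to $\{0\}$ whenever $b>0$, and sticks to $\{1\}$ whenever $b<0$. Existence, uniqueness and global attractivity of a $T$-periodic orbit $\tilde P_\alpha$ follow from a Poincar\'e-map contraction argument using monotonicity of the one-dimensional reflected flow. The sub/super-solution construction then mirrors the preceding case with $\tilde P_\alpha(t)$ replacing $\alpha P(t)+y$, except that during the sticking intervals the Gaussian profile must be replaced by a half-Gaussian (or more generally a one-sided boundary layer of width $\sqrt D$ extended by even reflection across $x=0$ or $x=1$) so as to respect the Neumann condition automatically.

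\textbf{Main obstacle.} The technical heart is the second subcase of Part (ii): smoothly matching the interior Gaussian concentration with the boundary-layer profile across the attachment and detachment times of $\tilde P_\alpha$ from the boundary. One must verify the subsolution (and the dual supersolution) inequality uniformly in the transition zone, preserve the Neumann boundary conditions globally, and control the effective potential so that its $T$-average stays within $o(1)$ of $\tfrac{1}{T}\int_0^T V(\tilde P_\alpha(s),s)\,\mathrm{d}s$. A multi-scale cut-off, with the transition region of length $O(\sqrt D\,L(D))$ and appropriately chosen interpolating amplitude, should close the estimate, but the bookkeeping of error terms near the corner points of the reflected orbit is where the proof becomes genuinely delicate.
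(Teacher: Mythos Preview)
Your overall architecture matches the paper's: the same moving-frame change of variables $\xi=x-\alpha P(t)$ (or $x=y+\tilde P_\alpha(t)$ in the large-$\alpha$ regime), followed by explicit sub/super-solutions and the comparison principle. Two points, however, are genuine gaps rather than merely sketches.

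\textbf{Part (i).} Your repair $b_\varepsilon=b+\varepsilon\,\mathrm{sgn}(\hat b)$ does not achieve what you claim. If $\hat b>0$ but $b$ changes sign (say $b(t_0)=-1$), then $b_\varepsilon$ still vanishes somewhere, so the hypothesis $\partial_x m(0,t)\neq 0$ for all $t$ of Theorem~\ref{sdthmP} still fails. The paper avoids this altogether: it applies the shear $x=y+\alpha\bigl[\hat b\,t-\int_0^t b\bigr]$, which converts the drift to the \emph{constant} $\alpha\hat b$. With constant nonzero drift the boundary hypothesis is trivially met and Theorem~\ref{sdthmP}(ii) applies directly (on the transformed, moving domain).

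\textbf{Part (ii), small $\alpha$, lower bound.} Your plan to use ``self-adjointness of $-D\partial_{\xi\xi}+W(\xi,t)$ in $\xi$'' and an Agmon-type argument is not available here: the time-periodic parabolic eigenvalue problem has no variational characterisation (the paper stresses this explicitly), and after the change of variables you are on a \emph{moving} domain $\xi\in[-\alpha P(t),\,1-\alpha P(t)]$ with Neumann conditions on moving walls, so a purely spatial energy estimate does not close. The paper's lower bound is obtained by a hand-built super-solution: on the fixed core $[-\alpha\underline P,\,1-\alpha\overline P]$ it takes the form $\exp\bigl[-\int_0^t W_\epsilon(\xi,s)\,\mathrm{d}s+t\,\hat W_\epsilon(\xi)\bigr]\beta_\epsilon(\xi)$, and outside the core it is multiplied by a time-growing factor $\eta_4$ that is activated only on the time-intervals where the moving boundary actually enters that region. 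This last device is what substitutes for the missing variational structure, and your proposal does not supply an analogue of it.

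For the large-$\alpha$ case your diagnosis of the difficulty (matching at attachment/detachment times of $\tilde P_\alpha$) is correct; the paper handles it not with half-Gaussians but with a piecewise construction in $y$ (four zones around $y=0$) combined with a time-localised growth factor $\eta_5$ active precisely off the sticking interval, in the same spirit as $\eta_4$ above.
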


 \begin{remark}
{\rm   When $\hat{b}=0$ and $\alpha=\frac{1}{\overline{P}-\underline{P}}$, part {\rm(ii)} of Theorem {\rm \ref{sdthm4}} implies  that $\lambda(D)\to\frac{1}{T}\int_{0}^{T}V\left(\tfrac{P(s)-\underline{P}}{\overline{P}-\underline{P}},s\right)\mathrm{d}s$ as $D\rightarrow0$.
Direct calculation yields that $\tfrac{P(t)-\underline{P}}{\overline{P}-\underline{P}}$ is in fact a periodic solution of $\dot{\tilde{P}}(t)=- \frac{1}{\overline{P}-\underline{P}} F(\tilde{P}(t),t)$, so that the uniqueness  part in Lemma {\rm \ref{existenceandunique}} implies  $\tilde{P}_\alpha(t)\to\tfrac{P(t)-\underline{P}}{\overline{P}-\underline{P}}$ as $\alpha\to \frac{1}{\overline{P}-\underline{P}}$.
This means that the limit of $\lambda(D)$ as $D\to 0$ is continuous  at $\alpha=\frac{1}{\overline{P}-\underline{P}}$.
}
\end{remark}

 For  $m(x,t)=\alpha b(t) x$, Theorem \ref{sdthm4}  gives a complete description of the behaviors of $\lambda(D)$ as $D\to0$, and it provides a type of complicated spatial-temporal degeneracy not covered by Theorem \ref{thm1.1}. To further illustrate Theorem \ref{sdthm4},
 consider the case $b(t)=-\frac{\pi}{T}\sin\left(\frac{2\pi t}{T}\right)$, in which
 $$P(t)=\tfrac{1}{2}\cos\left(\tfrac{2\pi t}{T}\right)-\tfrac{1}{2}, \quad\overline{P}=0, \quad \underline{P}=-1.$$
 More precisely, {\rm{(i)}} when $0<\alpha< 1$, we could find some $y_\alpha\in[\alpha,1]$ such that $\lambda(D)\rightarrow\frac{1}{T}\int_{0}^{T}V(\alpha P(s)+y_\alpha,s)\mathrm{d}s$ as $ D\rightarrow0$, and the trajectory $\{\alpha P(t)+y_\alpha:t\in[0,T]\}$ in $x$-$t$ plane is illustrated by the red solid curve in  Fig.\ref{figure1.1}{\rm(a)}, where the two red dotted curves represent $\{\alpha P(t)+\alpha:t\in[0,T]\}$ and $\{\alpha P(t)+1:t\in[0,T]\}$, respectively; {\rm{(ii)}} When $\alpha=1$,  we have $\lambda(D)\rightarrow\frac{1}{T}\int_{0}^{T}V( P(s)+1,s)\mathrm{d}s$ as $ D\rightarrow0$, and the trajectory $\{P(t)+1:t\in[0,T]\}$ is shown  in Fig.\ref{figure1.1}{\rm(b)}; {\rm{(iii)}} When $\alpha>1$, it follows that $\lambda(D)\rightarrow\frac{1}{T}\int_{0}^{T}V( \tilde{P}_\alpha(s),s)\mathrm{d}s$, and the corresponding trajectory $\{\tilde{P}_\alpha(t):t\in[0,T]\}$ is given in Fig.\ref{figure1.1}{\rm(c)}-{\rm(d)}.
\begin{figure}[http!!]
  \centering
\includegraphics[height=2.0in]{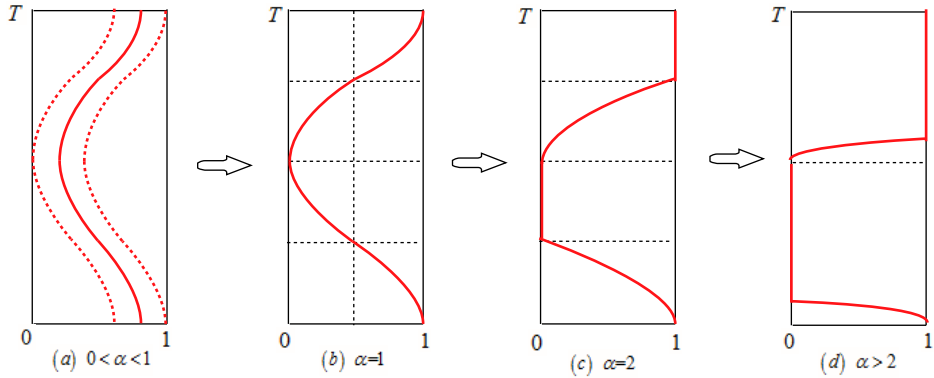}
  \caption{Each rectangle corresponds to the region $[0,1]\times[0,T]$ in  $x$-$t$ plane.
  The limit of $\lambda(D)$ as $D\to 0$ is determined by the  average of $V$ over the red solid curves, illustrated   for various ranges of $\alpha$ and  $m(x,t)=-\frac{\alpha\pi x}{T}\sin\left(\frac{2\pi t}{T}\right)$.}\label{figure1.1}
  \end{figure}



As the proofs of Theorems \ref{sdthmP}, \ref{thm1.1}, and \ref{sdthm4} are  fairly technical, in the following we briefly outline the main strategies in proving Theorems \ref{sdthmP} and \ref{thm1.1}:
\begin{itemize}

\smallskip
\item[{\rm{(i)}}]  We note that 
 $\lambda(D)$ for \eqref{elliptic} in the elliptic situation can be characterized by
 variational formulation 
\cite{CL2008,CL2012, PZZ2018,PZ2018}. In contrast, the time-periodic parabolic problem \eqref{SD_eq 1} has no variational formulations.
Our general strategy is to 
construct super/sub-solutions and apply generalized comparison principle developed in \cite[Theorem A.1]{LLPZ20192}.
This technique was first introduced by Berestycki and  Lions \cite{BL1980}
to  the elliptic scenario,  whereas its adaptation to our context is more subtle 
because of the presence of temporal variable; see \cite{PZ2015} for further discussions.

\smallskip
\item [{\rm{(ii)}}] We first  establish Theorem \ref{thm1.1}  which assumes that 
$\partial_x m$ is strictly positive,  negative, or identically zero  in each
sub-interval $(\kappa_i, \kappa_{i+1})$. 
The main difficulty is to establish the lower bound of the principal eigenvalue 
in \eqref{limitlambdaD}.
The construction of super-solutions
near the curves  $\{(\kappa_i,t),t\in[0,T]\}$ 
is rather subtle, due to the fact that the spatio-temporal derivatives of the principal eigenfunction of  \eqref{SD_eq 1} restricted to  the curves may be unbounded as $D$ tends to zero. Our strategy is to construct  the super-solution  almost coinciding with the principal eigenfunction of \eqref{SD_eq 1}  near these curves,
and then use  an  iterated argument to extend the super-solution to the whole domain. 

\smallskip
\item [{\rm{(iii)}}] A key ingredient in the proof of Theorem \ref{sdthmP}
is to recognize the critical role of the solutions of \eqref{eq:P}. 
Our idea is to reduce the proof of Theorem \ref{sdthmP} to that of 
Theorem \ref{thm1.1} with $\mathbf{B}=\emptyset$.
As Theorem \ref{thm1.1} assumes that $\partial_x m$ is either strictly  positive or  negative in each
sub-interval $(\kappa_i, \kappa_{i+1})$, there are two difficulty in doing so: First,
the solutions $P_i(t)$ of \eqref{eq:P} are not constant ones as specified  in Theorem \ref{thm1.1}.
This difficulty can be overcome by introducing a proper
transformation so that 
$P_i(t)$ become constant after the transformation. 
The second difficulty is that
a {\it priori} we do not know the sign of the term $\partial_x m$ 
in each
 $(\kappa_i, \kappa_{i+1})$.
  Our idea is to introduce another transformation,  which is associated with the trajectories of \eqref{eq:P}.
We prove that after the second transformation,
$\partial_x m$  is indeed either  strictly positive or negative in each
$(\kappa_i, \kappa_{i+1})$,
so that the proof of 
Theorem \ref{thm1.1} is directly applicable to complete the proof of Theorem \ref{sdthmP}.
\end{itemize}

\smallskip

This paper is organized as follows: In Section \ref{S2} we present some results associated with the case when all of periodic solutions of \eqref{eq:P} are constants and establish Theorem \ref{thm1.1}. These results are used in Section \ref{S3} to give the proof of Theorem \ref{sdthmP}, by combining with an idea of ``straightening periodic solutions''. Section \ref{S4} is devoted to the proof of Theorem \ref{sdthm4}. A generalized comparison result will be
presented 
in the Appendix.

\medskip

\section{Proof of Theorem \ref{thm1.1}}\label{S2}
This section is devoted to the proof of Theorem \ref{thm1.1}. 
Hereafter, we use
$\mathcal{L}_{D}$ to denote the time-periodic parabolic operator
$$\mathcal{L}_{D}:=\partial_{t}-D\partial_{xx}- \partial_xm\partial_x+V.$$

For any $x\in[0,1]$, we define a $T$-periodic function $f_x:[0,T]\to(0,\infty)$ by
\begin{equation}\label{deff}
  f_x(t)=\mbox{exp} \left[-\int_0^tV(x,s)\mathrm{d}s+\hat{V}(x)t\right],
\end{equation}
which solves, for fixed $x\in[0,1]$, that $(\log f_x)'=\hat{V}(x)-V(x,t)$.
\begin{prop}\label{sdlem1}
For any constant $\kappa\in(0,1)$, suppose that
$$\begin{cases}
\partial_xm(x,t)>0,& (x,t)\in[0,\kappa)\times[0,T],\\
\partial_xm(x,t)<0,& (x,t)\in(\kappa,1]\times[0,T].\\
\end{cases}$$
Then we have
$$\lim_{D\rightarrow 0}\lambda (D)=\hat{V}(\kappa).$$
\end{prop}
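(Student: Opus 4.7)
The plan is to bracket $\lambda(D)$ by $\hat V(\kappa)\pm\epsilon$ for arbitrary $\epsilon>0$, by producing a positive $T$-periodic sub-solution that forces $\limsup_{D\to 0}\lambda(D)\le \hat V(\kappa)+\epsilon$ and a positive $T$-periodic super-solution that forces $\liminf_{D\to 0}\lambda(D)\ge \hat V(\kappa)-\epsilon$; both bounds then follow from the generalized comparison principle (Theorem A.1 of \cite{LLPZ20192}). A useful preliminary observation is that, since $\partial_x m$ is continuous and changes sign across $x=\kappa$, one must have $\partial_x m(\kappa,t)\equiv 0$, so that $x\equiv\kappa$ is itself a constant periodic solution of \eqref{eq:P} and the assumption on $\partial_x m$ exactly says that the drift $-\partial_x m$ is globally attracting toward $\kappa$.

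For the upper bound I would try $\underline\varphi(x,t)=f_\kappa(t)\,\eta(x)$, where $\eta\in C^2$ is a nonnegative smooth bump supported in $(\kappa-\delta,\kappa+\delta)$ and peaked at $\kappa$. Using the identity $(\log f_\kappa)'=\hat V(\kappa)-V(\kappa,t)$ from \eqref{deff}, a direct calculation gives
\[
\frac{\mathcal L_D\underline\varphi}{\underline\varphi}=\hat V(\kappa)+\bigl(V(x,t)-V(\kappa,t)\bigr)-D\,\frac{\eta''(x)}{\eta(x)}-\partial_x m(x,t)\,\frac{\eta'(x)}{\eta(x)}.
\]
On the support of $\eta$ the fluctuation $V(x,t)-V(\kappa,t)$ is controlled by the modulus of continuity of $V$ and can be made smaller than $\epsilon/2$ by shrinking $\delta$; the drift contribution is nonpositive because $\partial_x m$ and $\eta'$ have opposite signs on each side of $\kappa$; and the diffusion term is $O(D)$ on the bulk where $\eta$ stays bounded below. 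This yields the sub-solution inequality $\mathcal L_D\underline\varphi\le (\hat V(\kappa)+\epsilon)\underline\varphi$ for $D$ small.

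The reverse inequality is the main obstacle, because the super-solution $\overline\varphi$ must be globally positive on $[0,1]\times[0,T]$ and compatible with the Neumann boundary conditions, so a cutoff trick is not available. My plan is the ansatz $\overline\varphi(x,t)=f_\kappa(t)\,g(x)$ with $g\in C^2([0,1])$ positive and $D$-independent, designed so that the drift term $-\partial_x m\cdot g'/g$ uniformly dominates $(V(x,t)-V(\kappa,t))-\epsilon$ on $[0,1]\times[0,T]$. Since $\mathrm{sgn}(\partial_x m)=\mathrm{sgn}(\kappa-x)$, one chooses $g$ with $g'(x)$ having the sign of $x-\kappa$, so that $-\partial_x m\cdot g'\ge 0$ automatically; the magnitude of $g'/g$ can then be tuned by scaling so as to absorb the $V$-fluctuations, while the shape of $g$ near the endpoints must be adjusted to obtain $g'(0)=g'(1)=0$.

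The delicate point in this construction is the inner region $|x-\kappa|\le\delta$, where $\partial_x m$ vanishes and so the favorable drift term degenerates: there the drift alone cannot offset $V-V(\kappa,\cdot)$. The strategy is to match an inner piece, on which $|V-V(\kappa,\cdot)|\le\epsilon/2$ by uniform continuity and no compensation beyond the $f_\kappa$ factor is needed, to an outer piece on which $|\partial_x m|\ge c>0$ and the drift compensation can be exploited. Verifying that such a piecewise construction can be smoothed into a single $C^2$, strictly positive, Neumann-compatible $g$, and that the remaining $O(D)$ diffusion contribution can be absorbed by shrinking $D$, will be the technical heart of the argument; once achieved, it delivers $\mathcal L_D\overline\varphi\ge (\hat V(\kappa)-\epsilon)\overline\varphi$ and the proposition follows from the comparison principle.
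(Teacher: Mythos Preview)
Your approach is essentially the paper's: separate the temporal factor $f_\kappa$ via \eqref{deff}, then build a bump sub-solution localized near $\kappa$ and a global super-solution of the form $f_\kappa(t)g(x)$ with $g$ growing away from $\kappa$, splitting the analysis into an inner region $|x-\kappa|\le\delta$ (where $|V-V(\kappa,\cdot)|<\epsilon/2$) and an outer region (where $|\partial_x m|\ge\epsilon_0>0$ and the drift dominates). Two simplifications from the paper are worth noting: for the super-solution the single explicit choice $g(x)=e^{M_1(x-\kappa)^2}$ with $M_1$ large handles both regions at once and needs no patching, and you do not need $g'(0)=g'(1)=0$---the generalized super-solution framework only requires $\partial_x\overline\varphi(0,t)\le 0\le\partial_x\overline\varphi(1,t)$, which the Gaussian satisfies automatically. (Also, a minor slip: on each side of $\kappa$, $\partial_x m$ and $\eta'$ have the \emph{same} sign, so their product is positive and $-\partial_x m\,\eta'/\eta\le 0$; your conclusion is right but the stated reason is inverted.)
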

\begin{proof}

We first prove the upper bound
\begin{equation}\label{ldn1}
 \limsup_{D\to0}\lambda(D)\leq\hat{V}(\kappa).
\end{equation}

Fix any $\epsilon>0$. For sufficiently small $D$, we  construct a strict non-negative sub-solution $\underline \varphi$ in the sense of Definition \ref{appendixldef} (see Appendix A) such that
 \begin{equation}\label{ldn2}
 \begin{cases}
 \smallskip
 \mathcal{L}_D\underline{\varphi}\leq\left[\hat{V}(\kappa)+\epsilon\right]\underline{\varphi}
 &\text {in}\,\,((0,1)\setminus\mathbb{X})\times(0,T),\\
 \medskip
 \partial_x\underline{\varphi}(0,t)=
 \partial_x\underline{\varphi}(1,t)=0  &\text {on}\,\,[0,T], \\
 \underline{\varphi}(x,0)=\underline{\varphi}(x,T) &\text{on}\,\,(0,1),
 \end{cases}
\end{equation}
 for some point set $\mathbb{X}$ determined later.

To this end, by continuity of $V$,  we  choose  small $\delta\in(0,1)$ such that
\begin{equation}\label{liu0004}
    |V(x,t)-V(\kappa,t)|<\epsilon/2\quad\text{on} \quad [\kappa-\delta,\kappa+\delta]\times[0,T].
\end{equation}
 Then we define  $\underline{\varphi}$ 
 by
 $$\underline{\varphi}(x,t):=f_{\kappa}(t) \cdot \underline{z}(x),$$
 where $f_\kappa(t)$ is defined in \eqref{deff} with $x=\kappa$, %
 and $\underline{z}\in C([0,1])$ is given by
\begin{equation}\label{underlinez}
  \underline{z}(x):=\begin{cases}
  \smallskip
-(x-\kappa)^2+\delta ^2 &\text{ on } [\kappa-\delta,\kappa+\delta],\\
0 & \text{ on } [0,\kappa-\delta)\cup(\kappa+\delta,1].
\end{cases}
\end{equation}

Observe that
$\partial_x\underline{\varphi}\left(\left(\kappa\pm\delta\right)^+,\cdot\right)>\partial_x\underline{\varphi}\left(\left(\kappa\pm\delta\right)^-,\cdot\right)$.  We now identify $\mathbb{X}$ in \eqref{ldn2} as
$$\begin{array}{c}
\mathbb{X}=\{\kappa\pm\delta\}.
\end{array}$$
To verify \eqref{ldn2}, 
 note from definition \eqref{deff} that $f'_{\kappa}=(\hat{V}(\kappa)-V(\kappa,t))f_{\kappa}$,
direct calculations on $[\kappa-\delta,\kappa+\delta]\times[0, T]$ yield that for small $D$,
\begin{equation*}
\begin{split}
\mathcal{L}_D\underline{\varphi}
&=  f'_{\kappa}(t)  \underline{z}-D \partial_{xx} \underline{\varphi}- \partial_xm\partial_x\underline{\varphi} +V(x,t) \underline{\varphi} \\
&= \left[\hat{V}(\kappa)-V\left(\kappa,t\right)+V(x,t)\right]\underline{\varphi}-\partial_{x}m\partial_{x}\underline{\varphi}-D\partial_{xx}\underline{\varphi}\\
&\leq \left[\hat{V}(\kappa)+\epsilon/2\right]\underline{\varphi}-\partial_{x}m\partial_{x}\underline{\varphi}-D\partial_{xx}\underline{\varphi}\\
&\leq \left[\hat{V}(\kappa)+\epsilon\right]\underline{\varphi},
\end{split}
\end{equation*}
 where $-\partial_{x}m\partial_{x}\underline{\varphi}-D\partial_{xx}\underline{\varphi}\leq \frac{\epsilon}{2} \underline{\varphi}$ in the last inequality is due to the fact that $-\partial_{x}m\partial_{x}\underline{\varphi}<0\leq \epsilon\underline{\varphi}$ in the neighborhoods of $\{\kappa\pm\delta\}\times[0,T]$. Hence \eqref{ldn2} holds, and  \eqref{ldn1} follows from \eqref{ldn2} and Proposition \ref{appendixprop} by letting $\epsilon\to0^+$.

Next, we show that
\begin{equation}\label{ldn1.1}
 \liminf_{D\to0}\lambda(D)
 \geq\hat{V}(\kappa).
\end{equation}

Define $\overline{\varphi}\in C^{2,1}([0,1]\times[0,T])$ by
 $$\overline{\varphi}(x,t):=f_\kappa(t)\cdot e^{M_1(x-\kappa)^2}$$
 with  $M_1>0$ to be specified later. For any given $\epsilon>0$, we  shall choose  $M_1$ large so that for sufficiently small $D$, $\overline{\varphi}$ satisfies
 \begin{equation}\label{supersoltion100}
 \begin{cases}
  \smallskip
 \mathcal{L}_D\overline{\varphi}\geq \left[\hat{V}(\kappa)-\epsilon\right]\overline{\varphi} &\text {in}\,\,(0,1)\times(0,T),\\
 \smallskip
 \partial_x\overline{\varphi}(0,t)<0 <\partial_x\overline{\varphi}(1,t)  &\text {on}\,\,[0,T], \\
 \overline{\varphi}(x,0)=\overline{\varphi}(x,T) &\text{on}\,\,(0,1).
 \end{cases}
\end{equation}

To establish \eqref{supersoltion100}, we first recall that $\delta$ is chosen as in \eqref{liu0004}. For $x\in(0,\kappa-\delta]\cup[\kappa+\delta,1)$, there exists some $\epsilon_0>0$  such that $|\partial_xm|\geq\epsilon_0$,
and thus
\begin{equation*}
  -\partial_xm\partial_x(\log\overline{\varphi})=2M_1\partial_xm\cdot(x-\kappa)\geq 2M_1\delta\epsilon_0,
\end{equation*}
from which direct calculation leads to
\begin{equation}\label{liu0002}
\begin{split}
\mathcal{L}_D\overline{\varphi}&\geq\left\{\hat{V}(\kappa)+V(x,t)-V\left(\kappa,t\right)-D\left[2M_1+4M_1^2(x-\kappa)^2\right]+2M_1\delta\epsilon_0\right\}\overline{\varphi}. 
\end{split}
\end{equation}
We  choose    $M_1$ large such that $2M_1\delta\epsilon_0>2\|V\|_{L^\infty}$. Letting  $D$ be small enough in \eqref{liu0002}, we deduce $
\mathcal{L}_D\overline{\varphi}\geq \hat{V}(\kappa)\overline{\varphi}$ as desired.

 For $x\in[\kappa-\delta,\kappa+\delta]$,  by $-\partial_xm\partial_x\overline{\varphi}\geq0$ and the definition of $\delta$ we have
\begin{equation*}
\begin{split}
\mathcal{L}_D\overline{\varphi}\geq \left\{\hat{V}(\kappa)+V(x,t)-V\left(\kappa,t\right)-D\left[2M_1+4M_1^2(x-\kappa)^2\right]\right\}\overline{\varphi}\geq \left[\hat{V}(\kappa)-\epsilon\right]\overline{\varphi}
\end{split}
\end{equation*}
for sufficiently small $D$.

 Therefore, \eqref{supersoltion100} holds and \eqref{ldn1.1} follows from Proposition \ref{appendixprop} with $\mathbb{X}=\emptyset$.
\end{proof}

%
To proceed further, we will need the following result:
\begin{lemma}\label{wholespaceeigenvalue0}
Let $\rho(t)\geq0\,(\not\equiv0)$ be any $T$-periodic function. For each  $R>0$, denote by $\mu_R$  the principal eigenvalue of the following problem:
\begin{equation}\label{unboundeigenvalue}
\begin{cases}
\smallskip
\partial_t\varphi-\partial_{xx}\varphi- x\rho(t)\partial_x\varphi=\mu_R \varphi &\text {in}\,\,(-R,R)\times(0,T),\\
\smallskip
\varphi(-R,t)=\varphi(R,t)=0 &\text {on}\,\,[0,T], \\
 \varphi(x,0)=\varphi(x,T) &\text{on}\,\,[-R,R].
\end{cases}
\end{equation}
Then we have
$$\lim_{R\to\infty}\mu_R=\hat{\rho}. 
$$
\end{lemma}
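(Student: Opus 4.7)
My plan is to squeeze $\mu_R$ between $\hat\rho$ and $\hat\rho+o_R(1)$ using explicit super- and sub-solutions built from a Gaussian eigenfunction of the whole-line problem.

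\emph{Step 1 (eigenfunction on $\mathbb R$).} First I solve the ODE $\dot a = 2a(\rho - a)$ via the substitution $u := 1/a$, which linearizes it to $\dot u + 2\rho(t)u = 2$; the integrating factor $e^{2\int_0^t\rho}$ together with the hypothesis $\hat\rho>0$ yields a unique positive $T$-periodic solution $u$, hence a positive $T$-periodic $a = 1/u$. Integrating $\dot a/a = 2(\rho-a)$ over $[0,T]$ gives $\hat a = \hat\rho$, so the linear ODE $\dot g/g = \hat\rho - a(t)$ admits a positive $T$-periodic solution $g$. A direct computation then shows
\[
\varphi_*(x,t) := g(t)\,\exp\!\bigl(-\tfrac{1}{2}a(t)x^2\bigr)
\]
satisfies $\mathcal L\varphi_* = \hat\rho\,\varphi_*$ on $\mathbb R\times[0,T]$, where $\mathcal L := \partial_t - \partial_{xx} - x\rho(t)\partial_x$.

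\emph{Step 2 (lower bound).} Since $\varphi_*>0$ on $\overline{(-R,R)}\times[0,T]$ and satisfies $\mathcal L\varphi_* = \hat\rho\varphi_*$, the ratio $\psi := \varphi_R/\varphi_*$ is positive inside, vanishes on $\{\pm R\}\times[0,T]$, and is $T$-periodic, so it attains an interior maximum at some $(x_0,t_0)$. A short computation gives
\[
\psi_t - \psi_{xx} - x\bigl(\rho(t) - 2a(t)\bigr)\psi_x = (\mu_R - \hat\rho)\psi,
\]
and evaluating at $(x_0,t_0)$, where $\psi_t = \psi_x = 0$ and $\psi_{xx}\leq 0$, forces $\mu_R \geq \hat\rho$. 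This is formally the lower-bound content of the generalized comparison principle (Proposition A.1) with $\varphi_*$ as super-solution.

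\emph{Step 3 (upper bound).} Set $\tilde\rho(t) := 2a(t) - \rho(t)$, so that $\hat{\tilde\rho} = 2\hat a - \hat\rho = \hat\rho$. Let $\Phi(t) := \int_0^t(\tilde\rho(s) - \hat\rho)\,ds$, which is $T$-periodic with $\Phi(0)=\Phi(T)=0$, and let $M := \max_{[0,T]}|\Phi|$. Define the $T$-periodic scale
\[
\xi(t) := \tfrac{1}{2}R\,e^{-M}\,e^{\Phi(t)},
\]
which satisfies $\xi(t)\in[c_0R,\,R/2]$ for $c_0 := e^{-2M}/2>0$ independent of $R$, and $\dot\xi/\xi = \tilde\rho(t) - \hat\rho$. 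Set $u(x,t) := \max\{1 - x^2/\xi(t)^2,\,0\}$ and define
\[
\underline\varphi(x,t) := \varphi_*(x,t)\cdot u(x,t).
\]
Then $\underline\varphi\geq 0$ is $T$-periodic, vanishes on $\{\pm R\}\times[0,T]$ and on $\{|x|\geq\xi(t)\}$, is smooth where positive, and along the curves $|x|=\xi(t)$ the spatial derivative $\partial_x\underline\varphi$ jumps upward — the correct sign for a generalized sub-solution. On the support $\{|x|<\xi(t)\}$, the product rule yields $\mathcal L(\varphi_*u) = \hat\rho\varphi_*u + \varphi_*\,\tilde{\mathcal L}u$ with $\tilde{\mathcal L}u := u_t - u_{xx} + x\tilde\rho u_x$, and the identity $\dot\xi/\xi - \tilde\rho = -\hat\rho$ produces the clean simplification
\[
\tilde{\mathcal L}u \,=\, \frac{2\bigl(1-\hat\rho\,x^2\bigr)}{\xi(t)^2}.
\]
Once $\hat\rho\,\xi^2 > 1$ (i.e., for $R$ large), the map $x\mapsto(1-\hat\rho x^2)/(\xi^2-x^2)$ is maximized at $x=0$ with value $1/\xi^2$, so
\[
\frac{\mathcal L\underline\varphi}{\underline\varphi} \,\leq\, \hat\rho + \frac{2}{\xi(t)^2} \,\leq\, \hat\rho + \frac{2}{(c_0R)^2}.
\]
Given $\epsilon>0$, choosing $R$ so large that $2/(c_0R)^2<\epsilon$ and invoking the generalized comparison principle (which tolerates the upward-jump kinks of $\underline\varphi$) yields $\mu_R \leq \hat\rho + \epsilon$. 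Combining with Step 2 and letting $\epsilon\to 0^+$ completes the proof of $\lim_{R\to\infty}\mu_R = \hat\rho$.

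The main difficulty I anticipate is identifying the correct \emph{time-dependent} truncation scale $\xi(t)$: any time-independent truncation leaves behind a residual term $-x^2\tilde\rho(t)$ in $\tilde{\mathcal L}u$ whose sign is not uniformly controllable (since $\tilde\rho$ may change sign on $[0,T]$), whereas the specific ODE $\dot\xi/\xi = \tilde\rho - \hat\rho$ exactly replaces this oscillating residual by the uniformly favourable $-\hat\rho\,x^2$, and the required $T$-periodicity of $\xi$ is automatic precisely because $\hat{\tilde\rho} = \hat\rho$.
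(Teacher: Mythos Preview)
Your proof is correct, and Steps~1--2 coincide with the paper's argument (your $a$, $g$, $\varphi_*$ are the paper's $\beta_0$, $\alpha_0$, $\psi_0$). Step~3, however, is genuinely different. The paper obtains the upper bound by perturbing $\rho$ to $\rho+\delta$, building the corresponding Gaussian $\psi_\delta$ with eigenvalue $\hat\rho+\delta$, and then using the \emph{additively} truncated function $\max\{\psi_\delta-\epsilon,0\}$ as sub-solution: the extra drift term $\delta x\,\partial_x\psi_\delta=\delta x^2\beta_\delta\psi_\delta$ dominates for large $|x|$, while the $\epsilon$-subtraction handles small $|x|$. You instead keep the exact Gaussian $\varphi_*$ and truncate \emph{multiplicatively} by a time-dependent parabolic profile $(1-x^2/\xi(t)^2)_+$; the ODE $\dot\xi/\xi=\tilde\rho-\hat\rho$ is precisely what converts the oscillating residual $-\tilde\rho(t)x^2$ into the clean $-\hat\rho\,x^2$. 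Your route is somewhat cleaner in that it avoids the auxiliary $\delta$-family and delivers the quantitative rate $\mu_R\le\hat\rho+2/(c_0R)^2$; the paper's route has the advantage that its sub-solution has kinks along \emph{fixed} vertical lines $\{\psi_\delta=\epsilon\}$ (after choosing $\tilde R_\delta$), so it matches Definition~\ref{appendixldef} verbatim, whereas your kinks lie along the moving curves $x=\pm\xi(t)$. This last point is only a technicality: the direct maximum-principle argument you gave in Step~2 applied to $\underline\varphi/\varphi_R$ works unchanged here (the maximum is attained in the open region $\{|x|<\xi(t)\}$ where everything is smooth), so you do not actually need the generalized comparison principle as stated.
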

\begin{proof}
 For each $\delta\geq 0$, in view of $\rho(t)\geq0\,(\not\equiv0)$ in $[0,T]$,  we choose  $\beta_\delta(t)$ as the unique positive 
solution of the problem
  \begin{equation}\label{beta}
  \begin{cases}
  \smallskip
 \frac{\dot{\beta}(t)}{2}=\beta(t)\left[\rho(t)+\delta-\beta(t)\right]\,\,\text{ in }\,[0,T],\\
 \beta(0)=\beta(T).
 \end{cases}
 \end{equation}
Denote by $(\alpha_\delta(t),\mu_\delta)$
an eigenpair, with $\alpha_\delta(t)>0$, of the eigenvalue problem
 \begin{equation}\label{alpha}
  \begin{cases}
  \smallskip
 \dot{\alpha}(t)+\beta_\delta(t)\alpha(t)=\mu \alpha(t) \,\,\text{ in }[0,T],\\
 \alpha(0)=\alpha(T).
 \end{cases}
 \end{equation}
  Dividing both sides of \eqref{beta} by $\beta_\delta$, and integrating the resulting equation over $[0,T]$,
by periodicity of $\beta_\delta$ we have
 $\hat{\beta}_\delta=\hat{\rho}+\delta. 
 $
Similarly,  \eqref{alpha} implies $\mu_\delta=\hat{\beta}_\delta$. Therefore,
 \begin{equation}\label{inqulitytildalambda}
   \mu_\delta=\hat{\beta}_\delta=\hat{\rho}+\delta. 
 \end{equation}

For any $\delta\geq 0$, we define $T$-periodic function $ \psi_\delta\in C^{2,1}(\mathbb{R}\times[0,T])$ by
  \begin{equation}\label{underlinevarphi2}
 \psi_\delta(x,t):=\alpha_\delta(t) e^{-\frac{\beta_\delta(t)}{2}x^2},
 \end{equation}
which, by definitions \eqref{beta} and \eqref{alpha}, solves
\begin{equation}\label{eq:hatphi}
  \mathcal{L}_\delta\psi_\delta:=
  \partial_t\psi_\delta-\partial_{xx}\psi_\delta-x(\rho(t)+\delta)\partial_{x}\psi_\delta=\mu_\delta\psi_\delta \quad\text{ in }\quad\mathbb{R}\times[0,T].
\end{equation}


 We first show $\liminf_{R\to\infty} \mu_R\geq \hat{\rho}$.
 For $\delta=0$, $\psi_0$ defined by \eqref{underlinevarphi2} is
 a super-solution
 to \eqref{unboundeigenvalue} in the sense of Definition \ref{appendixldef} for any $R>0$.
 By Proposition \ref{appendixprop},
 we have $\mu_R\geq \mu_0=\hat{\rho}$ for any $R>0$, and thus  $\liminf_{R\to\infty} \mu_R\geq \hat{\rho}$.

Next, we show $\limsup_{R\to\infty} \mu_R\leq \hat{\rho}$. Fix any $\delta>0$. Choose $R_\delta>0$ large such that $\delta x^2 \beta_\delta(t)\geq \hat{\rho}+\delta$ for all $|x|\geq R_\delta$ and $t\in[0,T]$.
Then let $\epsilon=\epsilon(\delta)>0$ be small so that  $\delta \psi_\delta(R_\delta, t)\geq \epsilon (\hat{\rho}+2\delta)$ for all $t\in[0,T]$.
Set $\tilde{\psi}_\delta=\max\{ \psi_\delta-\epsilon, 0\}$.
Note that we can choose
$\epsilon$ smaller if necessary such that
$\tilde{\psi}_\delta(x,t)>0$ holds for all $|x|\le R_\delta$ and $t\in [0, T]$.

On $\{(x,t): \tilde{\psi}_\delta(x,t)>0, |x|\ge R_\delta\}$, by \eqref{inqulitytildalambda} and \eqref{eq:hatphi} we calculate that
\begin{equation}\label{liu007-1}
\begin{split}
&\partial_t\tilde{\psi}_\delta-\partial_{xx}\tilde{\psi}_\delta- x\rho(t)\partial_x\tilde{\psi}_\delta-(\hat{\rho}+2\delta)\tilde{\psi}_\delta\\
=&(\hat{\rho}+\delta)\psi_\delta-\delta x^2 \beta_\delta(t) \psi_\delta -(\hat{\rho}+2\delta)(\psi_\delta-\epsilon)\\
\leq & (\hat{\rho}+\delta)\psi_\delta-\delta x^2 \beta_\delta(t) \psi_\delta\\
\leq & 0,
\end{split}
\end{equation}
where the last inequality follows from the choice of $R_\delta$.

On $\{(x,t): \tilde{\psi}_\delta(x,t)>0, |x|\le R_\delta\}$, we have
\begin{equation}\label{liu007-2}
\begin{split}
&\partial_t\tilde{\psi}_\delta-\partial_{xx}\tilde{\psi}_\delta- x\rho(t)\partial_x\tilde{\psi}_\delta-(\hat{\rho}+2\delta)\tilde{\psi}_\delta\\
=&(\hat{\rho}+\delta)\psi_\delta-\delta x^2 \beta_\delta(t) \psi_\delta -(\hat{\rho}+2\delta)(\psi_\delta-\epsilon)\\
\leq&(\hat{\rho}+\delta)\psi_\delta-(\hat{\rho}+2\delta)(\psi_\delta-\epsilon)
\\
\leq& \epsilon(\hat{\rho}+2\delta)
-\delta\psi_\delta(R_\delta, t)\\
\leq & 0,
\end{split}
\end{equation}
where the last inequality is due to the choice of $\epsilon$.

Choose some large $\tilde{R}_\delta>R_\delta$  such that  $\tilde{\psi}_\delta(\tilde{R}_\delta,t)=0$ for all $t\in[0,T]$.
By \eqref{liu007-1} and \eqref{liu007-2}, the constructed  $\tilde{\psi}_\delta$ is a sub-solution to \eqref{unboundeigenvalue} in the sense of Definition \ref{appendixldef} for any $R\geq \tilde{R}_\delta$.
A direct application of  Proposition \ref{appendixprop}  yields  $\mu_R\leq \hat{\rho}+2\delta$ for all $R\geq \tilde{R}_\delta$, and thus
$\limsup_{R\to\infty} \mu_R\leq \hat{\rho}+2\delta$. Letting $\delta\to 0$ completes the proof.
\end{proof}

\begin{prop}\label{sdlem2} For any $\kappa\in(0,1)$, suppose that
$$\begin{cases}
\partial_x m(x,t)<0,& (x,t)\in(0,\kappa)\times[0,T],\\
\partial_x m(x,t)>0,& (x,t)\in(\kappa,1)\times[0,T],\\
\partial_{xx} m(\kappa,t)\geq (\not\equiv) 0, &t\in[0,T],\\
m(x,0)=m(x,T),& x\in[0,1].
\end{cases}$$
Then we have
$$\lim_{D\rightarrow 0}\lambda (D)=\min\left\{ \hat{V}(0),\ \hat{V}(\kappa)+\partial_{xx} \hat{m}(\kappa),\ \hat{V}(1)\right\}.
$$
\end{prop}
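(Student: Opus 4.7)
The plan is to establish matching upper and lower asymptotic bounds, $\limsup_{D\to 0}\lambda(D) \leq \mu$ and $\liminf_{D\to 0}\lambda(D) \geq \mu$, where $\mu := \min\{\hat V(0),\ \hat V(\kappa) + \partial_{xx}\hat m(\kappa),\ \hat V(1)\}$. Both directions rely on explicit sub-/super-solution constructions together with the generalized comparison principle of Proposition~\ref{appendixprop}. Throughout, write $\Lambda_0 = \hat V(0)$, $\Lambda_\kappa = \hat V(\kappa) + \partial_{xx}\hat m(\kappa)$, $\Lambda_1 = \hat V(1)$, and $\rho(t) = \partial_{xx}m(\kappa, t) \geq 0$ (not identically zero by hypothesis). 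Let $\alpha_0(t),\beta_0(t)$ be the functions produced by Lemma~\ref{wholespaceeigenvalue0} with $\delta = 0$, so that $\psi_0(y, t) := \alpha_0(t) e^{-\beta_0(t) y^2/2}$ is the positive whole-line eigenfunction with eigenvalue $\hat\rho = \partial_{xx}\hat m(\kappa)$.

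For the \textbf{upper bound}, I would construct three separate sub-solutions, one per $\Lambda_i$. Near $\kappa$: rescale via $y = (x-\kappa)/\sqrt{D}$ and take $\underline\varphi(x, t) = f_\kappa(t)\bigl(\psi_0((x-\kappa)/\sqrt{D}, t) - \epsilon\bigr)_+$. The Taylor expansion $\partial_x m(x, t) = \rho(t)(x-\kappa) + O((x-\kappa)^2)$ together with the equation satisfied by $\psi_0$ yields $\mathcal{L}_D\underline\varphi \leq (\Lambda_\kappa + O(\epsilon))\underline\varphi$ on the $O(\sqrt{D|\log\epsilon|})$-neighborhood of $\kappa$ containing its support, closely paralleling the sub-solution half of the proof of Lemma~\ref{wholespaceeigenvalue0}. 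Near $x = 0$: take $\underline\varphi(x, t) = f_0(t) z(x)$ with $z$ supported in $[0, \delta]$ for a fixed small $\delta > 0$, $z'(0) = 0$, and $z$ non-increasing, e.g.\ $z(x) = \tfrac{1}{2}(1 + \cos(\pi x/\delta))$. The hypothesis $\partial_x m \leq 0$ on $[0, \delta]$ combined with $z' \leq 0$ forces the drift contribution $-\partial_x m \cdot z'$ to be non-positive, so only the $O(D)$-diffusion defect must be controlled, giving $\mathcal{L}_D\underline\varphi \leq (\Lambda_0 + \epsilon)\underline\varphi$. The construction near $x = 1$ is symmetric. Proposition~\ref{appendixprop} applied to each, followed by $\epsilon \to 0^+$, delivers $\limsup_{D\to 0}\lambda(D) \leq \mu$.

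The \textbf{lower bound} is the main difficulty. I need a single positive super-solution $\overline\varphi$ on $[0, 1] \times [0, T]$, $T$-periodic with Neumann boundary data, satisfying $\mathcal{L}_D\overline\varphi \geq (\mu - \epsilon)\overline\varphi$. My plan is to display it as a sum $\overline\varphi = \overline\varphi_0 + \overline\varphi_\kappa + \overline\varphi_1$ of three globally positive pieces, each satisfying $\mathcal{L}_D\overline\varphi_i \geq (\Lambda_i - O(\epsilon))\overline\varphi_i$ on $[0, 1] \times [0, T]$; by linearity the sum is then a super-solution with eigenvalue $\min_i\Lambda_i - O(\epsilon) = \mu - O(\epsilon)$. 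For the central piece one takes the rescaled Gaussian $\overline\varphi_\kappa(x, t) = f_\kappa(t)\alpha_0(t) e^{-\beta_0(t)(x-\kappa)^2/(2D)}$, and for the endpoint pieces the Proposition~\ref{sdlem1}-style exponentials $\overline\varphi_j(x, t) = f_j(t) e^{M_j q_j(x)}$ with nonnegative profiles $q_j$ and large coefficients $M_j$, tuned so that the definite signs of $\partial_x m$ on $(0, \kappa)$ and $(\kappa, 1)$ make the drift contribution dominate the diffusive defect. The principal obstacle is verifying that $\overline\varphi_\kappa$ is \emph{globally} a super-solution: a direct computation using \eqref{beta}--\eqref{alpha} yields
\[
\frac{\mathcal{L}_D\overline\varphi_\kappa}{\overline\varphi_\kappa} = \hat V(\kappa) + \hat\rho + V(x, t) - V(\kappa, t) + \frac{\beta_0(t)(x-\kappa)\bigl[\partial_x m(x, t) - \rho(t)(x-\kappa)\bigr]}{D},
\]
and the $1/D$-term has a priori uncontrolled sign once $|x - \kappa|$ leaves an $O(\sqrt{D})$-window around $\kappa$. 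The structural input that saves the construction is the stability hypothesis, which forces $(x-\kappa)\partial_x m(x, t) \geq 0$ throughout $[0, 1]\times[0, T]$; combined with replacing $\overline\varphi_\kappa$ outside this window by a Proposition~\ref{sdlem1}-style exponential $f_\kappa(t) e^{M(x-\kappa)^2}$ compensator, one obtains a global hybrid super-solution at eigenvalue $\Lambda_\kappa - O(\epsilon)$, smooth except at finitely many interface points where the one-sided derivative jumps satisfy the convexity condition of Proposition~\ref{appendixprop}. Assembling the three pieces and letting $\epsilon \to 0^+$ gives $\liminf_{D\to 0}\lambda(D) \geq \mu$, completing the proof.
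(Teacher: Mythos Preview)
Your upper-bound strategy is essentially the paper's (modulo the small technical point that the truncated whole-line eigenfunction $(\psi_0-\epsilon)_+$ should be built from $\psi_\delta$ with $\delta>0$ rather than $\psi_0$, exactly as in the sub-solution half of Lemma~\ref{wholespaceeigenvalue0}, to create the slack needed near the edge of the support).

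The lower bound, however, has a genuine gap. Your scheme asks each summand $\overline\varphi_i$ to satisfy $\mathcal{L}_D\overline\varphi_i\ge(\Lambda_i-O(\epsilon))\overline\varphi_i$ \emph{globally} on $[0,1]\times[0,T]$. But if this held for a positive $\overline\varphi_i$ with admissible boundary behaviour, then Proposition~\ref{appendixprop} applied to $\overline\varphi_i$ alone (not the sum) would already give $\lambda(D)\ge\Lambda_i-O(\epsilon)$ for each $i$, forcing $\liminf_{D\to 0}\lambda(D)\ge\max_i\Lambda_i$ and contradicting the claimed limit $\min_i\Lambda_i$ whenever the three values differ. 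Concretely, the compensator $f_\kappa(t)e^{M(x-\kappa)^2}$ carries the wrong drift sign here: in Proposition~\ref{sdlem1} the point $\kappa$ is a local \emph{maximum} of $m$, so $(x-\kappa)\partial_x m\le 0$ and the term $-\partial_x m\cdot 2M(x-\kappa)$ is nonnegative; in the present proposition $\kappa$ is a local \emph{minimum}, $(x-\kappa)\partial_x m\ge 0$, and the same term is nonpositive, working against the super-solution inequality. The endpoint pieces $\overline\varphi_j=f_j(t)e^{M_jq_j(x)}$ suffer the analogous sign reversal on the far side of $\kappa$.

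The paper's construction is therefore quite different and is the heart of the proof. Near $\kappa$ it replaces the pure Gaussian $\psi_0$ by a profile with sub-Gaussian tails $\exp(-c\,|y|^{2-\ell})$ for a small parameter $\ell>0$; this produces a strictly positive margin in the differential inequality for large $|y|$, but leaves $|\partial_x\log\overline\varphi(\kappa\pm\delta,\cdot)|$ of order $D^{-(1-\ell/2)}$, still divergent. The real work is an \emph{iterated} gluing over successive intervals $(\kappa+n\delta,\kappa+(n+1)\delta]$, $n=1,\dots,n_*$, using decaying exponentials with rates of order $D^{-(1-2^{n-2}\ell)}$; after $n_*\sim\log_2(1/\ell)$ steps the rate becomes $D$-independent, and only then can one close off near $x=0,1$ with $O(1)$-scale profiles exploiting the definite sign of $\partial_x m$ there. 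This passage from the $D^{-1/2}$ scale at $\kappa$ to the $O(1)$ scale at the endpoints cannot be short-circuited by a sum of three pieces or a single compensator.
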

\begin{proof} For any given $\epsilon>0$, we choose some small $\delta>0$ such that
\begin{equation}\label{defdelta}
\begin{array}{ll}
\medskip
|V(x,t)-V(0,t)|<\epsilon/2,\ &(x,t)\in[0,\delta]\times[0,T],\\
\medskip
|V(x,t)-V(\kappa,t)|<\epsilon/2,\ &(x,t)\in\left[\kappa-\delta,\kappa+\delta\right]\times[0,T],\\
|V(x,t)-V(1,t)|<\epsilon/2,\ &(x,t)\in[1-\delta,1]\times\in[0,T].
\end{array}
\end{equation}
\noindent {\bf Part I. } In this part, we establish the upper bound 
\begin{equation*}
  \limsup_{D\rightarrow 0}\lambda (D)\leq \lambda_{\min}:=\min\left\{ \hat{V}(0),\ \hat{V}(\kappa)+\partial_{xx} \hat{m}(\kappa),\ \hat{V}(1)\right\}.
\end{equation*}

By a similar argument as  in Proposition \ref{sdlem1}, it is straightforward to show that
$$\limsup_{D\rightarrow 0}\lambda (D)\leq\min\left\{ \hat{V}(0),\   \hat{V}(1)\right\}.$$
It remains to prove
\begin{equation}\label{upperlambda2.200}
  \limsup_{D\rightarrow 0}\lambda (D)\leq  \hat{V}(\kappa)+\partial_{xx}\hat{m}(\kappa). 
\end{equation}

Fix any $\epsilon>0$. For sufficiently small  $D$, we  construct a sub-solution $\underline \varphi$ such that
 \begin{equation}\label{ldnprop2.200}
 \begin{cases}
 \smallskip
 \mathcal{L}_D\underline{\varphi}\leq\left[\hat{V}(\kappa)+\partial_{xx}\hat{m}(\kappa)+2\epsilon\right]\underline{\varphi}
 &\text {in}\,\,((0,1)\setminus\mathbb{X})\times[0,T],\\
 \smallskip
 \partial_x\underline{\varphi}(0,t)=
 \partial_x\underline{\varphi}(1,t)=0  &\text {on}\,\,[0,T], \\
 \underline{\varphi}(x,0)=\underline{\varphi}(x,T) &\text{on}\,\,(0,1),
 \end{cases}
\end{equation}
 where the set $\mathbb{X}$ will be determined later.

To this end, we define 
 \begin{equation*}
   \bar{m}(x,t):=[\partial_{xx}m(\kappa,t)+\epsilon]\cdot\tfrac{\left(x-\kappa\right)^2}{2},
 \end{equation*}
and further choose $\delta$  smaller if necessary such that
 \begin{equation}\label{hatm}
    \begin{array}{c}
       |\partial_x\bar{m}|\geq|\partial_xm|~\,\,~\text{ in }~\left[\kappa-\delta,\kappa+\delta\right]\times[0,T].
     \end{array}
 \end{equation}

Let  $\bar\lambda_D$ denote the principal eigenvalue of the problem
\begin{equation}\label{auxiliaryproblem2.200}
\begin{cases}
\smallskip
\partial_t\psi-D\partial_{xx}\psi-\partial_x\bar{m}\partial_{x}\psi=\bar\lambda_D\psi  & \text{in}\,\,(\kappa-\delta,\kappa+\delta)\times[0,T],\\
\psi(\kappa-\delta,t)=\psi(\kappa+\delta,t)=0  &\text {on}\,\,[0,T], \\
 \psi(x,0)=\psi(x,T)  &\text{on}\,\,[\kappa-\delta,\kappa+\delta],
 \end{cases}
 \end{equation}
and the corresponding eigenfunction $\underline\psi_D$ is chosen to be positive in $(\kappa-\delta,\kappa+\delta)\times[0,T]$. Under the scaling $y=\frac{x-\kappa}{\sqrt{D}}$, we set $\underline\varphi_D(y,t):=\underline\psi_D(\sqrt{D}y+\kappa,t)$, which is the principal eigenfunction (associated to $\bar\lambda_D$) of the  problem
 \begin{equation*}
\begin{cases}
\smallskip
\partial_t\varphi-\partial_{yy}\varphi- y[\partial_{xx}m(\kappa,t)+\epsilon]\partial_y\varphi=\bar\lambda_D \varphi &\text {in}\,\,\left(-\frac{\delta}{\sqrt{D}},\frac{\delta}{\sqrt{D}}\right)\times[0,T],\\
\smallskip
\varphi(-\frac{\delta}{\sqrt{D}},t)=\varphi(\frac{\delta}{\sqrt{D}},t)=0  &\text {on}\,\,[0,T], \\
 \varphi(x,0)=\varphi(x,T)  &\text{on}\,\,\left[-\frac{\delta}{\sqrt{D}},\frac{\delta}{\sqrt{D}}\right].
\end{cases}
\end{equation*}
By Lemma \ref{wholespaceeigenvalue0}, we deduce that
 \begin{equation}\label{limithatlambda1}
   \lim_{D\to 0}\bar\lambda_D=\partial_{xx}\hat{m}(\kappa)+\epsilon. 
 \end{equation}

We extend $\underline\psi_D$, the principal eigenfunction of \eqref{auxiliaryproblem2.200}, to $[0,1]\times[0,T]$ by setting
$$\underline\psi_D\equiv0 \quad \text{on} \quad ([0,\kappa-\delta]\cup[\kappa+\delta,1])\times[0,T].$$
Applying the Hopf boundary lemma to \eqref{auxiliaryproblem2.200}, we have
$$\partial_{x}\underline\psi_D\left((\kappa-\delta)^+,\cdot\right)>0=\partial_{x}\underline\psi_D\left((\kappa-\delta)^-,\cdot\right),$$ 
$$\partial_{x}\underline\psi_D\left((\kappa+\delta)^+,\cdot\right)=0>\partial_{x}\underline\psi_D\left((\kappa+\delta)^-,\cdot\right),$$
so that we choose $\mathbb{X}$ by
$
\mathbb{X}=\{\kappa\pm\delta\}$.

Define 
 \begin{equation*}
 \underline{\varphi}(x,t)=f_\kappa(t)\cdot\underline\psi_D(x,t) \quad \text{in} \quad[0,1]\times[0,T],
 \end{equation*}
where $f_\kappa(t)$ is given by \eqref{deff} with $x=\kappa$.
We verify that $\underline{\varphi}$  satisfies \eqref{ldnprop2.200}.
By properties of $\underline\psi_D$ and \eqref{hatm} we can derive that
$$-\partial_xm\partial_{x}\underline\psi_D\leq-\partial_x\bar m\partial_{x}\underline\psi_D\quad\text{ in }\quad [0,1]\times[0,T].$$
Hence, direct calculations on $((0,1)\setminus\mathbb{X})\times[0,T]$ 
give
 \begin{equation*}
\begin{split}
\mathcal{L}_D\underline{\varphi}=&\left[-V\left(\kappa,t\right)+\hat{V}\left(\kappa\right)+V(x,t)\right]\underline{\varphi}+\left[\partial_{t}\underline\psi_D-D\partial_{xx}\underline\psi_D-\partial_xm\partial_{x}\underline\psi_D\right]f_{\kappa}(t)\\
\leq & \left[\hat{V}\left(\kappa\right)+\epsilon/2\right]\underline{\varphi}+\left[\partial_{t}\underline\psi_D-D\partial_{xx}\underline\psi_D-\partial_x\bar m\partial_{x}\underline\psi_D\right]f_{\kappa}(t)\\
= &\left[\hat{V}\left(\kappa\right)+\bar\lambda_D+\epsilon/2\right]\underline{\varphi}\\
\leq& \left[\hat{V}\left(\kappa\right)+\partial_{xx}\hat{m}(\kappa)+2\epsilon\right]\underline{\varphi},
\end{split}
\end{equation*}
provided that $D$ is small enough, where the last inequality  is a consequence of  \eqref{limithatlambda1}. Therefore,  $\underline \varphi$ defines a sub-solution satisfying \eqref{ldnprop2.200}, 
which together with  Proposition \ref{appendixprop} implies \eqref{upperlambda2.200}.
\medskip

\noindent {\bf Part II. } We shall establish the lower bound
\begin{equation}\label{lowerbound2.200}
  \liminf_{D\rightarrow 0}\lambda (D)\geq \lambda_{\min}:=\min\left\{ \hat{V}(0),\ \hat{V}(\kappa)+\partial_{xx} \hat{m}(\kappa),\ \hat{V}(1)\right\}. 
\end{equation}

For each small $\epsilon>0$, the main ingredient in the proof is to construct a positive continuous super-solution $\overline\varphi$  in the sense of  Definition \ref{appendixldef}, i.e. for sufficiently small $D$,
 \begin{equation}\label{supersoltion}
 \begin{cases}
  \smallskip
 \mathcal{L}_D\overline{\varphi}\geq (1-\epsilon)(\lambda_{\min}-\epsilon)\overline{\varphi}  &\text {in}\,\,((0,1)\setminus\mathbb{X})\times[0,T],\\
 \smallskip
 \partial_x\overline{\varphi}(0,t)=\partial_x\overline{\varphi}(1,t)=0  &\text {on}\,\,[0,T], \\
 \overline{\varphi}(x,0)=\overline{\varphi}(x,T) &\text{on}\,\,(0,1),
 \end{cases}
\end{equation}
 where the point set $\mathbb{X}$ will be determined in Step 3. Then \eqref{lowerbound2.200} follows from  Proposition \ref{appendixprop} and arbitrariness of $\epsilon$.


\medskip

\noindent {\bf Step 1.} We prepare some notations. 
First, we  choose suitable $T$-periodic function $\underline{\rho}(t)\geq\not\equiv0$ and small $\delta>0$ such that
\begin{equation}\label{defhatm1}
 \begin{cases}
\smallskip
\max\{\partial_{xx}m(\kappa,t)-\epsilon,0\}\leq\underline{\rho}(t)\leq\partial_{xx}m(\kappa,t), \quad t\in[0,T],\\
\underline{\rho}(t)|x-\kappa|\leq |\partial_x m(x,t)|, \quad (x,t)\in[\kappa-\delta,\kappa+\delta]\times[0,T].
\end{cases}
\end{equation}
Due to $\hat{\underline{\rho}}>0$, 
define   $r(t)$ as the unique positive $T$-periodic solution of
 \begin{equation}\label{r(t)}
 \frac{\dot{r}(t)}{2-\ell}=r(t)\left[\underline{\rho}(t)-\left(\frac{4}{(2-\ell)^2}+\frac{\epsilon}{2}\right)r(t)\right], 
 \end{equation}
where the small parameter $\ell\in(0,\epsilon/2]$ can be specified as follows: Note that there exist $0<\underline{r}<\overline{r}$ independent of $\ell\in(0,\epsilon/2]$ such that
$$0<\underline{r}< r(t)< \overline{r}\quad \text{ for all }t\in[0,T] \text{ and } \ell\in[0,\epsilon/2].$$
We fix $\ell\in(0,\epsilon/2]$ small such that
 \begin{equation}\label{conditionofell}
  \begin{cases}
  \smallskip
 \frac{\frac{2}{2-\ell}}{\frac{4}{(2-\ell)^2}+\frac{\epsilon}{2}}\geq 1-\epsilon,\\
 \nu:=\frac{\ell \overline{r}}{2-\ell}< \left[\sqrt{\frac{4}{(2-\ell)^2}+\frac{\epsilon}{2}}-1\right]\underline{r}.
 \end{cases}
 \end{equation}
Without loss of generality, we assume there is some $n_*\in \mathbb{N}$ ($n_*>3$) such that
 \begin{equation}\label{n*}
  1/\ell= 2^{n_*-2},
 \end{equation}
 and further choose $\delta$  smaller if necessary  such that
 $$\delta<\kappa-(n_*+1)\delta<\kappa+(n_*+1)\delta<1-\delta.$$

For fixed $r(t)$ and $\ell$, we define $(\alpha_1(t),\lambda_\ell)$ as the eigenpair of
\begin{equation}\label{r(t)1}
  \begin{cases}
  \smallskip
  \dot{\alpha}_1(t)+\frac{2}{2-\ell}\alpha_1(t)r(t)=\lambda_\ell\alpha_1(t) &\text{ in }[0,T],\\
 \alpha_1(0)=\alpha_1(T).
 \end{cases}
 \end{equation}
 Similar to \eqref{inqulitytildalambda}, we deduce from \eqref{r(t)} and \eqref{r(t)1} that
 $$\lambda_\ell=\frac{2}{2-\ell}\hat{r}=\frac{\frac{2}{2-\ell}}{\frac{4}{(2-\ell)^2}+\frac{\epsilon}{2}}\hat{\underline{\rho}},$$
which, together with \eqref{conditionofell}, leads to
\begin{equation}\label{relationlambdaell}
  \lambda_\ell\geq(1-\epsilon)\hat{\underline{\rho}}.
\end{equation}

\smallskip

\noindent {\bf Step 2.} We construct a positive super-solution $\overline\psi\in C(\mathbb{R}\times[0,T])$ for the auxiliary problem
\begin{equation}\label{wholespaceeigenvalue}
\begin{cases}
\smallskip
\partial_t\psi-\partial_{yy}\psi- y\underline{\rho}(t)\partial_y\psi=(1-\epsilon)\hat{\underline{\rho}} \psi &\text {in}\,\,\mathbb{R}\times[0,T],\\
\psi(x,0)=\psi(x,T) &\text{on}\,\,\mathbb{R}.
\end{cases}
\end{equation}

Using the notations introduced in Step 1, we define 
\begin{equation}\label{overlinepsi}
\overline{\psi}(y,t):=
\begin{cases}
 \smallskip
\alpha_1(t)e^{-\frac{r(t)}{2-\ell}y^2} &\mbox{ on } [-y_0,y_0]\times[0,T],\\
 \smallskip
\eta_1(t)e^{-\frac{(r(t)+\nu)y_0^\ell }{2-\ell}y^{2-\ell}} &\mbox{ on } (y_0,\infty)\times[0,T],\\
\eta_1(t)e^{-\frac{(r(t)+\nu)y_0^\ell }{2-\ell}(-y)^{2-\ell}} &\mbox{ on } (-\infty,-y_0)\times[0,T],\\
 \end{cases}
\end{equation}
where $y_0$ is a constant to be determined later, and $\eta_1(t)=\alpha_1(t)e^{\frac{\nu y_0^2}{2-\ell}}$, so that $\overline{\psi}\in C(\mathbb{R}\times[0,T])$ and $(\log\eta_1)'=(\log\alpha_1)'$ independent of $y_0$.

By the definition of $\nu$ in \eqref{conditionofell}, we may assert that for any $y_0>0$,
\begin{equation}\label{relationy0}
  \partial_y(\log\overline{\psi})(y_0^-,\cdot)=\left[-\frac{2r(\cdot)}{2-\ell}\right]y_0>\left[-(r(\cdot)+\nu)\right]y_0=\partial_y(\log\overline{\psi})(y_0^+,\cdot),
\end{equation}
and similarly, $\partial_y\overline{\psi}((-y_0)^-,\cdot)>\partial_y\overline{\psi}((-y_0)^+,\cdot)$. Therefore, in view of \eqref{relationlambdaell}, to verify that $\overline\psi$ defined by \eqref{overlinepsi} is a super-solution of \eqref{wholespaceeigenvalue}, it remains to choose large $y_0$ such that
\begin{equation}\label{supersolutioncondition}
  \partial_t\overline{\psi}-\partial_{yy}\overline{\psi}- y\underline{\rho}(t)\partial_y\overline{\psi}\geq \lambda_\ell \overline{\psi} \quad \text {in }\,\,(\mathbb{R}\setminus\{\pm y_0\})\times[0,T],
\end{equation}
which can be verified by the following computations:
\begin{itemize}
  \item [{\rm(i)}] For $y\in(-y_0,y_0)$, by \eqref{r(t)} and \eqref{overlinepsi}, direct calculations yield
   \begin{equation*}
\begin{split}
&\partial_t\overline{\psi}-\partial_{yy}\overline{\psi}- y\underline{\rho}(t)\partial_y\overline{\psi}\\
=& \left[(\log\alpha_1)'-\frac{\dot{r}(t)}{2-\ell}y^2+ \frac{2r(t)}{2-\ell}-\frac{4r^2(t)}{(2-\ell)^2}y^2+\frac{2r(t)\underline{\rho}(t)}{2-\ell}y^2 \right]\overline{\psi}\\
\geq& \left[(\log\alpha_1)'+ \frac{2r(t)}{2-\ell}\right]\overline{\psi}+\left[-\frac{\dot{r}(t)}{2-\ell}-\frac{4r^2(t)}{(2-\ell)^2}+r(t)\underline{\rho}(t) \right]y^2\overline{\psi}\\
\geq& \left[(\log\alpha_1)'+ \frac{2r(t)}{2-\ell}\right]\overline{\psi}\\
=&\lambda_\ell\overline{\psi};
\end{split}
\end{equation*}
  \item [{\rm(ii)}] For $y\in(y_0,\infty)$, again by \eqref{r(t)} and \eqref{overlinepsi}, we calculate that
   \begin{equation*}
\begin{split}
&\partial_t\overline{\psi}-\partial_{yy}\overline{\psi}- y\underline{\rho}(t)\partial_y\overline{\psi}-\lambda_\ell\overline{\psi}\\
= &\left[(\log\eta_1)'-\lambda_\ell \right]\overline{\psi}+y_0^\ell y^{2-\ell}\left[-\frac{\dot{r}(t)}{2-\ell}+ (1-\ell)(r(t)+\nu) y^{-2}\right]\overline{\psi}\\
&+y_0^\ell y^{2-\ell}\left[-(r(t)+\nu)^2y_0^{\ell} y^{-\ell}+(r(t)+\nu)\underline{\rho}(t)\right]\overline{\psi}\\
\geq& \left[(\log\alpha_1)'-\lambda_\ell \right]\overline{\psi}+y_0^\ell y^{2-\ell}\left[-\frac{\dot{r}(t)}{2-\ell}-(r(t)+\nu)^2+r(t)\underline{\rho}(t)\right]\overline{\psi}\\
=&\left[(\log\alpha_1)'-\lambda_\ell \right]\overline{\psi}+y_0^\ell y^{2-\ell}\left[\left(\frac{4}{(2-\ell)^2}+\frac{\epsilon}{2}\right)r^2(t)-(r(t)+\nu)^2\right]\overline{\psi}.
\end{split}
\end{equation*}
In light of $\left(\frac{4}{(2-\ell)^2}+\frac{\epsilon}{2}\right)r^2(t)>(r(t)+\nu)^2$ (due to \eqref{conditionofell}), we may pick  $y_0$ large enough to ensure  \eqref{supersolutioncondition} on $(y_0,\infty)\times[0,T]$;
 \item [{\rm(iii)}] For $y\in(-\infty,-y_0)$, we can verify \eqref{supersolutioncondition} by the same argument as in {\rm(ii)}.
\end{itemize}

Consequently, \eqref{supersolutioncondition} holds true, and $\overline\psi$ constructed by \eqref{overlinepsi} is a super-solution of \eqref{wholespaceeigenvalue} in the sense of Definition \ref{appendixldef}.

In what follows, we  divide the construction of super-solution  $\overline\varphi$ which satisfies \eqref{supersoltion}  into the following several steps via separating different regions; see Fig.\ref{figure1} for the profile of  $\overline\varphi$ to be constructed.
\begin{figure}[http!!]
  \centering
\includegraphics[height=3in]{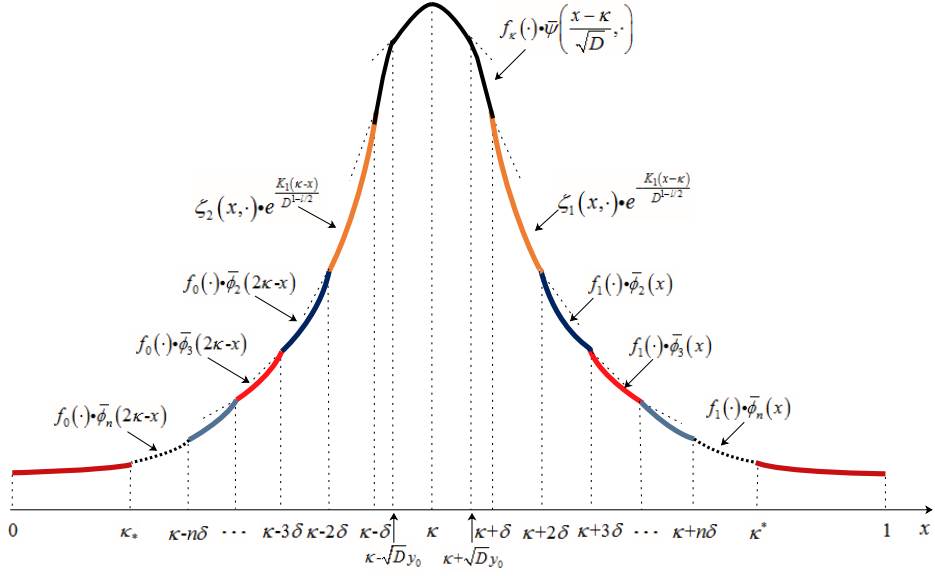}
  \caption{The profile of  $\overline\varphi$ for fixed $t\in[0,T]$.}\label{figure1}
  \end{figure}

\medskip
\noindent {\bf Step 3.} We construct super-solution $\overline\varphi$ on $[\kappa-\delta,\kappa+\delta]\times[0,T]$ satisfying \eqref{supersoltion}.
Let $\overline\psi$ be given by \eqref{overlinepsi} with fixed $y_0$ chosen in Step 2. 
We assume $\sqrt{D}y_0<\delta$, and define $\mathbb{X}$ by
 \begin{equation}\label{mathbbX}
  \mathbb{X}=\bigcup\limits_{n=1}^{n_*} \{\kappa\pm n\delta\}\cup\{\delta,1-\delta\} \cup\left\{\kappa\pm\sqrt{D}y_0\right\}.
 \end{equation}
where $n_*$ is chosen in \eqref{n*}. Set
\begin{equation}\label{overlinevarphi100}
\overline{\varphi}(x,t):=f_\kappa(t)\cdot\overline\psi\left(\frac{x-\kappa}{\sqrt{D}},t\right) \quad \text{on}\quad [\kappa-\delta,\kappa+\delta]\times[0,T],
\end{equation}
where  $f_\kappa(t)$ is defined by \eqref{deff} with $x=\kappa$. Note that  $\overline{\varphi}$ is symmetric in $x$ 
with respect to $x=\kappa$, and is decreasing  in $x$ for $x\geq \kappa$ and $t\in[0,T]$. Thus by \eqref{defhatm1} and \eqref{overlinevarphi100} we arrive at
\begin{equation}\label{liu000011}
-\partial_x m\partial_x \overline{\varphi}=|\partial_x m|\cdot|\partial_x \overline{\varphi}|\geq f_\kappa(t) \underline{\rho}(t)\frac{|x-\kappa|}{\sqrt{D}}\left|\partial_y\overline\psi\left(\frac{x-\kappa}{\sqrt{D}},t\right)\right|=-f_\kappa(t) \underline{\rho}(t)\cdot y\partial_y\overline\psi\left(y,t\right),
\end{equation}
where $y=\frac{x-\kappa}{\sqrt{D}}$. This  implies that on  $([\kappa-\delta,\kappa+\delta]\setminus\{\kappa\pm\sqrt{D}y_0\})\times[0,T]$, 
\begin{equation*}
\begin{split}
\mathcal{L}_D\overline{\varphi}\geq&\left[-V\left(\kappa,t\right)+\hat{V}\left(\kappa\right)+V(x,t)\right]\overline{\varphi}+\left[\partial_{t}\overline{\psi}-\partial_{yy}\overline{\psi}- \underline{\rho}(t)y\partial_y\overline\psi\right]f_{\kappa}(t)\\
\geq &\left[\hat{V}\left(\kappa\right)-\epsilon/2+(1-\epsilon)\hat{\underline{\rho}}\right]\overline{\varphi}\\
\geq & \left[\hat{V}(\kappa)+(1-\epsilon)\partial_{xx}\hat{m}(\kappa)-\epsilon(1-\epsilon)\right]\overline{\varphi}\\
\geq& (1-\epsilon)(\lambda_{\min}-\epsilon)\overline{\varphi},
\end{split}
\end{equation*}
where the first inequality is due to \eqref{liu000011},  the second inequality follows from \eqref{defdelta} and the fact that $\overline\psi$ is a super-solution of \eqref{wholespaceeigenvalue} (see Step 2), and the third inequality follows from \eqref{defhatm1}.

On the other hand, by \eqref{relationy0}, we have
$$\partial_x(\log\overline{\varphi})((\kappa+\sqrt{D}y_0)^+,\cdot)<\partial_x(\log\overline{\varphi})((\kappa+\sqrt{D}y_0)^-,\cdot)\quad\quad(\text{as}\quad \kappa+\sqrt{D}y_0\in\mathbb{X}).$$
Therefore,  $\overline{\varphi}$ defined by \eqref{overlinevarphi100}  satisfies \eqref{supersoltion} on  $[\kappa-\delta,\kappa+\delta]\times[0,T]$.

\medskip
\noindent {\bf Step 4.} We construct  $\overline\varphi$ which satisfies \eqref{supersoltion} on $(\kappa+\delta, \kappa+2\delta]\times[0,T]$.
Since $\sqrt{D}y_0<\delta$, by \eqref{overlinevarphi100} in Step 3 and  \eqref{overlinepsi} in Step 2, we have
\begin{equation}\label{relation3}
\begin{cases}
\smallskip
\log\overline{\varphi}(\kappa+\delta,t)= \log f_\kappa(t)+\log \eta_1(t)-\frac{(r(t)+\nu)y_0^\ell \delta^{2-\ell}}{(2-\ell)D^{1-\ell/2}},\\
  \partial_x(\log\overline{\varphi})((\kappa+\delta)^-,\cdot)=-(r(\cdot)+\nu) \frac{y_0^\ell\delta^{1-\ell}}{D^{1-\ell/2}},
\end{cases}
\end{equation}
whence there is some constant $K_0>0$ such that
\begin{equation}\label{relation4}
  \left|\partial_t(\log\overline{\varphi})(\kappa+\delta,\cdot)\right|=\left|(\log f_\kappa)'+(\log \eta_1)' 
  -\frac{\dot{r}(t)y_0^\ell \delta^{2-\ell}}{(2-\ell)D^{1-\ell/2}}\right|<\frac{K_0}{D^{1-\ell/2}}.
\end{equation}
We introduce a small parameter $\epsilon_0>0$ such that
 $$|\partial_x m|\geq \epsilon_0 \quad\text{on} \quad ([\delta,\kappa-\delta]\cup[\kappa+\delta,1-\delta])\times[0,T],$$
  and  fix constant $K_1$ so that
\begin{equation*}
  K_1>(\bar{r}+\nu)y_0^\ell\delta^{1-\ell}+2K_0/\epsilon_0.
\end{equation*}
Then we define
\begin{equation}\label{overlinevarphi101}
\overline{\varphi}(x,t):=\zeta_1(x,t)\cdot e^{-\frac{K_1(x-\kappa)}{D^{1-\ell/2}}}\quad \text{ on }\,\, (\kappa+\delta, \kappa+2\delta]\times[0,T].
\end{equation}
Here $\zeta_1\in C^{2,1}((\kappa+\delta, \kappa+2\delta)\times[0,T])$ is determined by
\begin{equation}\label{defeta}
\begin{array}{l}
\log\zeta_1(x,t)=\left[\frac{(\kappa+2\delta)-x}{\delta}\right]\cdot\left[\frac{K_1\delta}{D^{1-\ell/2}}+\log\overline\varphi(\kappa+\delta,t)\right]+\left[\frac{x-(\kappa+\delta)}{\delta}\right]\log f_1(t),
\end{array}
\end{equation}
with $T$-periodic function $f_1(t)$ defined in \eqref{deff} with $x=1$, so that
$$\zeta_1(\kappa+\delta,t)=e^{\frac{K_1\delta}{D^{1-\ell/2}}}\cdot\overline\varphi(\kappa+\delta,t). 
  $$
This implies immediately that $\overline{\varphi}$ defined by \eqref{overlinevarphi101} is continuous at $\{\kappa+\delta\}\times[0,T]$. In light of $\partial_x\zeta_1<0$ (for small $D$),  using \eqref{relation3} and \eqref{overlinevarphi101},   by choice of $K_1$ we can verify that
$$\partial_x(\log\overline{\varphi})((\kappa+\delta)^+,\cdot)<-K_1/D^{1-\ell/2}<\partial_x(\log\overline{\varphi})((\kappa+\delta)^-,\cdot)\quad\quad(\text{as}\quad \kappa+\delta\in\mathbb{X}).$$

  On the other hand, combined with \eqref{relation3}, \eqref{relation4}, and \eqref{defeta}, it is easily seen that
\begin{equation*}
\left|\partial_t(\log\zeta_1)\right|<\frac{2K_0}{D^{1-\ell/2}}, \quad -\frac{3K_1}{D^{1-\ell/2}}<\partial_x(\log\zeta_1)<0, \quad \text{and}\quad \partial_{xx}(\log\zeta_1)=0
\end{equation*}
for small $D$, and thus
$$\left|\frac{\partial_{xx}\zeta_1}{\zeta_1}\right|=\left|\partial_{xx}(\log\zeta_1)+[\partial_{x}(\log\zeta_1)]^2\right|<\frac{9K_1^2}{D^{2-\ell}},$$
from which, using \eqref{overlinevarphi101} and $-\partial_xm\cdot\partial_x(\log\zeta_1)\geq0$,
we may calculate that
\begin{equation*}
\begin{split}
\mathcal{L}_D\overline{\varphi}=&\left\{\partial_t(\log\zeta_1)-D\left[\frac{\partial_{xx}\zeta_1}{\zeta_1}-\frac{2K_1}{D^{1-\ell/2}}\partial_x(\log\zeta_1)+\frac{K_1^2}{D^{2-\ell}}\right]\right\}\overline{\varphi}\\
&+\left\{-\partial_xm\cdot\left[\partial_x(\log\zeta_1)-\frac{K_1}{D^{1-\ell/2}}\right]+V\right\}\overline{\varphi}\\
\geq &\left[-\frac{2K_0}{D^{1-\ell/2}}-\frac{16K_1^2}{D^{1-\ell}}+\frac{\epsilon_0K_1}{D^{1-\ell/2}}+V\right]\overline{\varphi}\\
=&\frac{1}{D^{1-\ell/2}}\left[-2K_0+\epsilon_0K_1-16K_1^2D^{\ell/2}+D^{1-\ell/2}V\right]\overline{\varphi}.
\end{split}
\end{equation*}
Since $\epsilon_0K_1>2K_0$ (by the definition of $K_1$),  we may choose $D$  small such that \eqref{supersoltion} holds.
Step 4 is thereby  completed.

\medskip
\noindent {\bf Step 5.} We construct $\overline\varphi$ on $(\kappa+2\delta, \kappa+3\delta]\times[0,T]$.
By  \eqref{overlinevarphi101} and \eqref{defeta}  in Step 4, we have
\begin{equation}\label{kappa+2delta}
  \log\overline{\varphi}(\kappa+2\delta,t)=\log f_1(t)-\frac{2K_1\delta}{D^{1-\ell/2}}\quad\text{and}\quad\partial_x(\log\overline{\varphi})((\kappa+2\delta)^-,\cdot)>-\frac{4K_1}{D^{1-\ell/2}}.
\end{equation}
Fix a constant $K_2$ such that $K_2>16K_1^2/\epsilon_0$, where $\epsilon_0$ is given in Step 4 such that $\partial_x m\geq \epsilon_0$ on $[\kappa+\delta,1-\delta]\times[0,T]$. Define 
\begin{equation}\label{overlinevarphi102}
\overline{\varphi}(x,t):=f_1(t)\cdot\overline{\phi}_2(x)\quad \text{on }\,\,(\kappa+2\delta, \kappa+3\delta]\times[0,T],
\end{equation}
where $\overline{\phi}_2$ solves
\begin{equation}\label{overlinephi}
\begin{cases}
\smallskip
(\log\overline{\phi}_2)'(x)=-\frac{4K_1}{D^{1-\ell/2}}\left[\frac{\kappa+3\delta-x}{\delta}\right]-\frac{K_2}{D^{1-\ell}}\left[\frac{x-(\kappa+2\delta)}{\delta}\right]\quad \text{in }\,\,(\kappa+2\delta, \kappa+3\delta],\\
 \log\overline{\phi}_2(\kappa+2\delta)=-\frac{2K_1\delta}{D^{1-\ell/2}}.
\end{cases}
\end{equation}
Together \eqref{overlinevarphi102} with \eqref{kappa+2delta} and \eqref{overlinephi}, we discover that $\overline{\varphi}$ is continuous at $\{\kappa+2\delta\}\times[0,T]$, and
$$\partial_x(\log\overline{\varphi})((\kappa+2\delta)^+,\cdot)=-4K_1/D^{1-\ell/2}<\partial_x(\log\overline{\varphi})((\kappa+2\delta)^-,\cdot)\quad(\text{as}\quad\kappa+2\delta\in\mathbb{X}).$$
For all $x\in(\kappa+2\delta, \kappa+3\delta]$, by \eqref{overlinephi} we  have
$$\left|\frac{\overline{\phi}_2''}{\overline{\phi}_2}\right|=\left|(\log\overline{\phi}_2)''+[(\log\overline{\phi}_2)']^2\right|\leq\frac{4K_1}{\delta D^{1-\ell/2}}+\frac{16K_1^2}{D^{2-\ell}},$$
from which we arrive at
\begin{equation*}
\begin{split}
\mathcal{L}_D\overline{\varphi}=&\left[(\log f_1)'-D\overline{\phi}_2''/\overline{\phi}_2-\partial_xm\cdot(\log\overline{\phi}_2)'+V\right]\overline{\varphi}\\
\geq &\left[(\log f_1)'-\frac{4K_1}{\delta}D^{\ell/2}-\frac{16K_1^2}{D^{1-\ell}}+\frac{\epsilon_0K_2}{D^{1-\ell}}+V\right]\overline{\varphi}. 
\end{split}
\end{equation*}
In view of $\epsilon_0K_2>16K_1^2$, we once more would select $D$  small enough such that \eqref{supersoltion} holds. 

\medskip
\noindent {\bf Step 6.} We construct $\overline\varphi$ on $(\kappa+3\delta, \kappa+(n_*+1)\delta]\times[0,T]$, where $n_*$ is determined by \eqref{n*} in Step 1. By the definition of $\overline{\phi}_2$ in \eqref{overlinephi}, we have
\begin{equation}\label{liu000018}
\partial_x(\log\overline{\varphi})\left((\kappa+3\delta)^-,\cdot\right)=(\log\overline{\phi}_2)'\left(\kappa+3\delta\right)=-K_2/D^{1-\ell}.
\end{equation}
We introduce a sequence $\{K_n\}_{n=3}^{n_*}$  independent of $D$ such that $K_{n}>K_{n-1}^2/\epsilon_0$.  With $\overline{\phi}_2$  in hand, by induction  we define $\overline{\phi}_n\in C^{2,1}([\kappa+n\delta,\kappa+(n+1)\delta])$ ($n=3,\ldots,n_*$) to solve
\begin{equation}\label{overlinephi_n}
\begin{cases}
\smallskip
(\log\overline{\phi}_{n})'(x)=-\left[\frac{K_{n-1}}{D^{1-2^{n-3}\ell}}+\epsilon\right]\cdot\left[\frac{\kappa+n\delta-x}{\delta}\right]-\frac{K_{n}}{D^{1-2^{n-2}\ell}}\left[\frac{x-(\kappa+n\delta)}{\delta}\right] \,\,\text{in }(\kappa+n\delta, \kappa+(n+1)\delta],\\
 \log\overline{\phi}_{n}(\kappa+n\delta)=\log\overline{\phi}_{n-1}(\kappa+n\delta).
\end{cases}
\end{equation}
Then we define
\begin{equation}\label{overlinevarphi103}
\overline{\varphi}(x,t):=f_1(t)\cdot\overline{\phi}_n(x)\quad \text{on}\quad (\kappa+n\delta, \kappa+(n+1)\delta]\times[0,T].
\end{equation}
By \eqref{liu000018} and \eqref{overlinephi_n}, it can be verified that 
\begin{equation*}
\begin{split}
\partial_x(\log\overline{\varphi})\left((\kappa+3\delta)^+,\cdot\right)=\partial_x(\log\overline{\phi}_3)\left(\kappa+3\delta\right)=-\left[\frac{K_2}{D^{1-\ell}}+\epsilon\right]<\partial_x(\log\overline{\varphi})\left((\kappa+3\delta)^-,\cdot\right),
\end{split}
\end{equation*}
and similarly for $4 \leq n\leq n_*$,
\begin{equation*}
\begin{split}
\partial_x(\log\overline{\varphi})\left((\kappa+n\delta)^+,\cdot\right)<\partial_x(\log\overline{\varphi})\left((\kappa+n\delta)^-,\cdot\right)\quad(\text{as}\,\,\kappa+n\delta\in\mathbb{X}).
\end{split}
\end{equation*}

For each $3\leq n\leq n_*$,  
it follows from \eqref{overlinephi_n} that for $x\in(\kappa+n\delta, \kappa+(n+1)\delta]$
\begin{equation}\label{liu000012}
-\left[\frac{K_{n-1}}{D^{1-2^{n-3}\ell}}+\epsilon\right]\leq(\log\overline{\phi}_n)'\leq -\frac{K_{n}}{D^{1-2^{n-2}\ell}},
\end{equation}
and then as in Step 5, we derive that
\begin{equation}\label{liu000013}
\left|\frac{\overline{\phi}_n''}{\overline{\phi}_n}\right|=\left|(\log\overline{\phi}_n)''+[(\log\overline{\phi}_n)']^2\right|\leq\frac{2K_{n-1}}{\delta D^{1-2^{n-3}\ell}}+\frac{K_{n-1}^2}{D^{2-2^{n-2}\ell}}.
\end{equation}
By \eqref{liu000012} and \eqref{liu000013},  on $(\kappa+n\delta, \kappa+(n+1)\delta]\times[0,T]$, we calculate that
\begin{equation*}
\begin{split}
\mathcal{L}_D\overline{\varphi}=&\left[(\log f_1)'- D\overline{\phi}_n''/\overline{\phi}_n-\partial_xm\cdot(\log\overline{\phi}_n)'+V\right]\overline{\varphi}\\
\geq &\left[(\log f_1)'-\frac{2K_{n-1}}{\delta}D^{2^{n-3}\ell}-\frac{K_{n-1}^2}{D^{1-2^{n-2}\ell}}+\frac{\epsilon_0K_{n}}{D^{1-2^{n-2}\ell}}+V\right]\overline{\varphi}.
\end{split}
\end{equation*}
 Since $\epsilon_0K_{n}>K_{n-1}^2$,  we choose $D$ to be small so that  $\overline{\varphi}$  satisfies \eqref{supersoltion}.

\medskip
\noindent {\bf Step 7.} We construct $\overline\varphi$ on $(\kappa+(n_*+1)\delta, 1]\times[0,T]$.
Set $\kappa^*=\kappa+(n_*+1)\delta$. 
Observe from Step 6 and the definition of $n_*$ in \eqref{n*} that
\begin{equation*}
  \partial_x(\log\overline{\varphi})((\kappa^*)^-,\cdot)=-K_{n_*}/D^{1-2^{n_*-2}\ell}=-K_{n_*}.
\end{equation*}

We define 
\begin{equation}\label{overlinevarphi104}
\overline{\varphi}(x,t):=f_1(t)\overline{\phi}_{n_*}(\kappa^*)\cdot
\begin{cases}
\medskip
 e^{-K_*(x-\kappa^*)} & \text{on } (\kappa^*,1-\delta]\times[0,T],\\
  e^{\frac{K_*+\epsilon}{2\delta}(1-x)^2+\theta_1} & \text{on } (1-\delta,1]\times[0,T],
\end{cases}
\end{equation}
where $K_*>K_{n_*}$ will  be determined later, and the parameter $\theta_1$ is chosen such that $\overline{\varphi}$ is continuous at $\{1-\delta\}\times[0,T]$. It 
follows that
$$\partial_x(\log\overline{\varphi})(x^+,\cdot)<\partial_x(\log\overline{\varphi})(x^-,\cdot)\quad\text{for}\quad x\in\{ \kappa^*,1-\delta\}\subset\mathbb{X}.$$

It remains to verify that $\overline{\varphi}$ defined by \eqref{overlinevarphi104} satisfies \eqref{supersoltion}.
For $x\in(\kappa^*,1-\delta]$, since $\partial_x m\geq\epsilon_0$, using \eqref{overlinevarphi104} we deduce that
\begin{equation*}
\begin{split}
\mathcal{L}_D\overline{\varphi}\geq\left[(\log f_1)'- DK_*^2+\epsilon_0 K_*+V\right]\overline{\varphi}. 
\end{split}
\end{equation*}
By choosing $K_*$  large and then choosing $D$  small, we see that $\overline{\varphi}$ satisfies \eqref{supersoltion}.

For $x\in(1-\delta,1]$, since $-\partial_xm\partial_x\overline{\varphi}\geq0$, by  \eqref{overlinevarphi104}, letting $D$ be so small that
 \begin{equation*}
\begin{split}
\mathcal{L}_D\overline{\varphi}\geq&\left\{(\log f_1)'- D(K_*+\epsilon)/\delta\cdot\Big[(K_*+\epsilon)(x-1)^2/\delta+1\Big]+V\right\}\overline{\varphi}\\
\geq &\left[\hat{V}(1)-V(1,t)+V(x,t)-\epsilon/2\right]\overline{\varphi}\\
\geq&(\lambda_{\min}-\epsilon)\overline{\varphi},
\end{split}
\end{equation*}
where the last inequality is  due to \eqref{defdelta}.

By Steps 3-7, we have already constructed the strict super-solution $\overline{\varphi}$ satisfying \eqref{supersoltion} on $[\kappa-\delta,1]\times[0,T]$ with the set $\mathbb{X}$ given by \eqref{mathbbX}, which is summarized  in the following table for the convenience of readers; see also Fig.\ref{figure1}.
 \begin{table}[!htbp]
\centering
\footnotesize
\begin{tabular}{|c|c|c|}
\hline
\cline{1-3}
\multicolumn{3}{|c|}{Construction of $\overline{\varphi}$ on $[\kappa-\delta,1]\times[0,T]$}\\
\hline
\cline{1-3}
$\overline{\varphi}(x,t)$ & Region &Defined ~in \\
\hline
\cline{1-3}
$f_\kappa(t)\cdot\overline\psi\left(\frac{x-\kappa}{\sqrt{D}},t\right)$ & $[\kappa-\delta,\kappa+\delta]\times[0,T]$ & \eqref{overlinevarphi100}  in Step 3 \\
\hline
\cline{1-3}
$\zeta_1(x,t)\cdot e^{-\frac{K_1(x-\kappa)}{D^{1-\ell/2}}}$ & $[\kappa+\delta,\kappa+2\delta]\times[0,T]$ & \eqref{overlinevarphi101} and \eqref{defeta} in Step 4 \\
\hline
\cline{1-3}
$f_1(t)\cdot\overline{\phi}_n(x)$ & \tabincell{c}{$(\kappa+n\delta, \kappa+(n+1)\delta]\times[0,T] $\\
 $(n=2,\ldots, n_*)$} & \tabincell{c}{\eqref{overlinephi_n} and \eqref{overlinevarphi103}  in \\
  Steps 5 and 6 }\\
 \hline
\cline{1-3}
$f_1(t)\overline{\phi}_{n_*}(\kappa^*)\cdot e^{-K_*(x-\kappa^*)}$ & $( \kappa^*,1-\delta]\times[0,T]$ & \eqref{overlinevarphi104}  in Step  7 \\
\hline
\cline{1-3}
$f_1(t)\overline{\phi}_{n_*}(\kappa^*)\cdot e^{\frac{K_*+\epsilon}{2\delta}(1-x)^2+\theta_1}$ & $(1-\delta,1]\times[0,T]$ & \eqref{overlinevarphi104}  in Step  7\\
\hline
\cline{1-3}
\end{tabular}
\end{table}

Finally, we construct  $\overline{\varphi}$ on $[0,\kappa-\delta)\times[0,T]$ symmetrically; and precisely, we define
\begin{equation}\label{overlinevarphi105}
  \overline{\varphi}(x,t)=
\begin{cases}
\zeta_2(x,t)\cdot e^{\frac{K_1(\kappa-x)}{D^{1-\ell/2}}} & \text{on } [\kappa-2\delta, \kappa-\delta)\times[0,T], \\
f_0(t)\cdot\overline{\phi}_n(2\kappa-x) &\text{on } \begin{cases} [\kappa-(n+1)\delta, \kappa-n\delta)\times[0,T],\\
(n=2,\ldots, n_*),\end{cases}\\
\medskip
f_0(t)\overline{\phi}_{n_*}(\kappa_*)\cdot e^{K_*(\kappa_*-x)} & \text{on }[\delta, \kappa_*)\times[0,T],\\
f_0(t)\overline{\phi}_{n_*}(\kappa_*)\cdot e^{\frac{K_*+\epsilon}{2\delta}x^2+\theta_2} & \text{on } [0,\delta)\times[0,T],
\end{cases}
\end{equation}
where $\kappa_*=\kappa-(n_*+1)\delta$, and similar to  \eqref{defeta}, $\zeta_2$ solves
\begin{equation*}
\begin{array}{l}
\log\zeta_2(x,t)=\left[\frac{x-(\kappa-2\delta)}{\delta}\right]\cdot\left[\frac{K_1\delta}{D^{1-\ell/2}}+\log\overline\varphi(\kappa-\delta,t)\right]+\left[\frac{(\kappa-\delta)-x}{\delta}\right]\log f_0(t),
\end{array}
\end{equation*}
with $f_0$ defined in \eqref{deff} with $x=0$, and $\theta_2$ is chosen such that $\overline{\varphi}$ is continuous at $\{\delta\}\times[0,T]$.
Using the same arguments as in Steps 4-7, we may conclude that $\overline{\varphi}$ defined by \eqref{overlinevarphi105} verifies \eqref{supersoltion}, and thus $\overline{\varphi}$ constructed above defines a super-solution  on the entire region $[0,1]\times[0,T]$ with $\mathbb{X}$ given by \eqref{mathbbX}. Therefore, \eqref{lowerbound2.200} follows from Proposition \ref{appendixprop}.
\end{proof}


By assuming $\partial_xm(0,t)>0$ and $\partial_xm(1,t)<0$ for each $t\in[0,T]$, it is shown in Proposition \ref{sdlem1}  that the limit of $\lambda(D)$ as $D\to 0$ does not depend upon the value of $V$ on  boundary points $\{0,1\}\times[0,T]$. However, without the positivity assumption of $\partial_xm(0,t)$, one can prove

\begin{lemma}\label{sdlem5}
Suppose that $\partial_x m(x,t)>0$ for all $(x,t)\in(0,1)\times[0,T]$.
Then
$$\lim_{D\rightarrow 0}\lambda (D)=\min\left\{\hat{V}(0)+[\partial_{xx}\hat{m}]_{+}(0),\,\, \hat{V}(1)\right\},$$
where $\partial_{xx}\hat{m}(0)$ is defined by \eqref{boundarycondition}.
\end{lemma}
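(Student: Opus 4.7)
The proof will be structured as a matching upper-lower bound argument. Write $\lambda_{\min}:=\min\{\hat V(0)+[\partial_{xx}\hat m]_+(0),\,\hat V(1)\}$. My plan is to adapt the sub- and super-solution machinery of Propositions \ref{sdlem1} and \ref{sdlem2} to the present one-sided setting, in which $\partial_x m>0$ throughout $(0,1)\times[0,T]$ but $\partial_x m(0,t)$ is allowed to vanish at the boundary. Since the ODE $\dot P=-\partial_x m(P,t)$ then has no interior periodic solution and all trajectories drift toward $x=0$, the boundary points play asymmetric roles: $x=1$ contributes the elementary candidate $\hat V(1)$, while $x=0$ carries the more delicate contribution $\hat V(0)+[\partial_{xx}\hat m]_+(0)$.

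For the upper bound $\limsup_{D\to 0}\lambda(D)\le\lambda_{\min}$, I will exhibit two separate sub-solutions. For the $\hat V(1)$ contribution, a direct transcription of Proposition \ref{sdlem1} suffices: take $\underline\varphi(x,t)=f_1(t)\underline z(x)$ with $\underline z(x)=-(x-1)^2+\delta^2$ on $[1-\delta,1]$ and $\underline z\equiv 0$ on $[0,1-\delta)$. Then $\partial_x\underline z\ge 0$ and $\partial_x m\ge 0$ make the drift term $-\partial_x m\,\partial_x\underline\varphi$ non-positive on the support, and the discontinuity at $x=1-\delta$ satisfies the favourable jump $\partial_x\underline\varphi((1-\delta)^+,\cdot)>\partial_x\underline\varphi((1-\delta)^-,\cdot)$ automatically. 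For the $\hat V(0)+[\partial_{xx}\hat m]_+(0)$ contribution (trivial when this equals $+\infty$), I will mimic Part~I of Proposition \ref{sdlem2}: introduce $\bar m(x,t):=[\partial_{xx}m(0,t)+\epsilon]x^2/2$ on a small $[0,\delta]$ chosen so that $|\partial_x\bar m|\ge|\partial_x m|$, let $\underline\psi_D$ be the positive principal eigenfunction of $\partial_t\psi-D\partial_{xx}\psi-\partial_x\bar m\,\partial_x\psi=\bar\lambda_D\psi$ on $(0,\delta)$ with $\partial_x\psi(0,t)=0$ and $\psi(\delta,t)=0$, and rescale by $y=x/\sqrt D$. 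A half-line variant of Lemma \ref{wholespaceeigenvalue0}, obtained from the full-line statement by even reflection across $y=0$, yields $\bar\lambda_D\to\partial_{xx}\hat m(0)+\epsilon$ as $D\to 0$. Multiplying by $f_0(t)$ and extending by zero past $\delta$ produces the required sub-solution.

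The lower bound $\liminf_{D\to 0}\lambda(D)\ge\lambda_{\min}$ will require a single continuous positive super-solution $\overline\varphi$ on $[0,1]\times[0,T]$ satisfying $\mathcal{L}_D\overline\varphi\ge(1-\epsilon)(\lambda_{\min}-\epsilon)\overline\varphi$ off a finite set $\mathbb X$. My plan is to transpose the seven-step construction of Proposition \ref{sdlem2}, Part~II, with $\kappa$ now taken to be the boundary point $0$ and only the right half of the construction retained: the inner Gaussian layer of \eqref{overlinepsi} becomes one-sided on $y\in[0,y_0]$, with $\partial_y\overline\psi(0,t)=0$ automatic by the built-in symmetry; the exponential iteration of Steps~4-6 then extends $\overline\varphi$ across $[\delta,1-\delta]$, exploiting $\partial_x m\ge\epsilon_0>0$ to ensure the drift gain $-\partial_x m\,\partial_x(\log\overline\varphi)>0$ dominates the singular contributions at each dyadic scale $D^{-(1-2^{n-2}\ell)}$; and near $x=1$ the Gaussian profile $f_1(t)\exp\bigl[(K_*+\epsilon)(1-x)^2/(2\delta)+\theta\bigr]$ from Step~7 yields $\mathcal{L}_D\overline\varphi\ge(\hat V(1)-\epsilon)\overline\varphi$ via $-\partial_x m\,\partial_x\overline\varphi\ge 0$ and the uniform closeness of $V$ to $V(1,t)$ on $[1-\delta,1]$.

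The principal obstacle will be the degenerate boundary regime $\partial_x\hat m(0^+)>0$, in which $[\partial_{xx}\hat m]_+(0)=+\infty$ and $\lambda_{\min}=\hat V(1)$. This case has no analogue in Proposition \ref{sdlem2}, because there $\kappa$ is interior with $\partial_x m(\kappa,t)\equiv 0$ and the inner Gaussian scaling is genuinely needed; here $\partial_x m(0,t)$ may be strictly positive throughout, so the inner-layer construction degenerates. My remedy is to bypass the local construction altogether and take the single global super-solution $\overline\varphi=f_1(t)\exp\bigl[M_1(x-1-\eta)^2\bigr]$ on $[0,1]\times[0,T]$, in the spirit of the super-solution of Proposition \ref{sdlem1}: for $\eta>0$ small and $M_1$ large, one has $\partial_x\overline\varphi\le 0$, the drift gain $-\partial_x m\,\partial_x\overline\varphi=2M_1(1+\eta-x)\partial_x m\,\overline\varphi\ge 2M_1\delta\epsilon_0\overline\varphi$ on $[0,1-\delta]$ absorbs the oscillation of $V$ and the diffusion-order terms for small $D$, while on $[1-\delta,1]$ the closeness of $V$ to $V(1,t)$ handles the estimate, giving $\mathcal{L}_D\overline\varphi\ge(\hat V(1)-\epsilon)\overline\varphi$ as required.
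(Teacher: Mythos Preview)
Your overall architecture matches the paper's: split into the two boundary cases $\partial_x m(0,t)\equiv 0$ (handled by a one-sided transcription of Proposition~\ref{sdlem2}) and $\partial_x\hat m(0^+)>0$ (where $\lambda_{\min}=\hat V(1)$), and the upper bounds are fine.

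There is, however, a genuine gap in your lower bound for the case $\partial_x\hat m(0^+)>0$. Your global super-solution $\overline\varphi=f_1(t)\exp\bigl[M_1(x-1-\eta)^2\bigr]$ requires, on $[0,1-\delta]$, a uniform bound $\partial_x m\ge\epsilon_0>0$ to make the drift gain $2M_1(1+\eta-x)\partial_x m$ absorb the oscillation $V(x,t)-V(1,t)$. But the hypothesis $\partial_x\hat m(0^+)>0$ only gives $\partial_x m(0,t)\ge 0$ with strict inequality on a set of positive measure; it is perfectly possible that $\partial_x m(0,t_*)=0$ for some $t_*$, in which case no such $\epsilon_0$ exists near $(0,t_*)$ and your differential inequality fails there. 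The paper's construction confronts exactly this: near $x=0$ it replaces $f_1(t)$ by a $T$-periodic factor $\eta_2(t)$ chosen so that $(\log\eta_2)'$ is very large on the \emph{bad} time set where $\partial_x m(0,\cdot)$ is small, and relies on the genuine drift bound $\partial_x m>\epsilon_0$ only on a short \emph{good} time interval $[t_0-\delta_0,t_0+\delta_0]$ (which exists because $\partial_x m(0,\cdot)\not\equiv 0$). The price is that $\eta_2$ must shed the accumulated growth on the good interval, and that is precisely where the drift term, now bounded below, pays for it. Your single-profile Gaussian cannot perform this temporal redistribution; you need a construction that separates good and bad times near $x=0$.
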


\begin{proof}
If $\partial_x m(0,t)=0$ for all $t\in[0,T]$, Lemma \ref{sdlem5} can be proved directly by constructing the same super/sub-solutions
as those in Proposition \ref{sdlem2} defined on $[\kappa,1]\times[0,T]$. It suffices to consider the remaining case $\partial_x \hat{m}(0)>0$ and in view of $\partial_{xx}\hat{m}_{+}(0)=\infty$ in this case, i.e. to show
$$\lim_{D\rightarrow 0}\lambda (D)=\hat{V}(1).$$

First, similarly as in the proof of Proposition \ref{sdlem1}, we may construct a sub-solution to prove  $\limsup_{D\rightarrow 0}\lambda (D)\leq\hat{V}(1)$. In the sequel, we show
\begin{equation}\label{lowerbound11}
 \liminf_{D\rightarrow 0}\lambda (D)\geq\hat{V}(1).
\end{equation}

 For any given $\epsilon>0$, we fix some small  $\delta>0$  such that
$$
|V(x,t)-V(1,t)|<\epsilon/2\,\,\text{ on }\,\,[1-\delta,1]\times[0,T].$$
The strategy is to construct a positive super-solution $\overline \varphi\in C^{2,1}([0,1]\times[0,T])$, which satisfies
 \begin{equation}\label{supersoltionlem2.5}
 \begin{cases}
  \smallskip
 \mathcal{L}_D\overline{\varphi}\geq \left[\hat{V}(1)-\epsilon\right]\overline{\varphi} &\text {in}\,\,(0,1)\times[0,T],\\
 \smallskip
 \partial_x\overline{\varphi}(0,t)<0, \ \
 \partial_x\overline{\varphi}(1,t)=0  &\text {on}\,\,[0,T], \\
 \overline{\varphi}(x,0)=\overline{\varphi}(x,T) &\text{on}\,\,(0,1)
 \end{cases}
\end{equation}
for sufficiently small $D$. To this end, we proceed as follows: 

On $[1-\delta,1]\times[0,T]$,  we define 
$$\overline{\varphi}(x,t):=f_1(t)\cdot   e^{\frac{M_2}{2\delta}(1-x)^2}\quad \text{on} \quad [1-\delta,1]\times[0,T], 
$$
where  $M_2>0$ will be determined later, and $f_1(t)$ is given by \eqref{deff} with $x=1$.
As Step 2 in Proposition \ref{sdlem1}, one can verify that \eqref{supersoltionlem2.5} holds on $[1-\delta,1]\times[0,T]$.

On  $[0,\delta]\times[0,T]$, since $\partial_x m(0,t)\geq(\not\equiv)0$ for $t\in[0,T]$ (due to $\partial_x \hat{m}(0)>0$ ),  one can find some $t_0\in(0,T)$ and positive constants $\epsilon_0,\delta_0$ such that
$$\partial_x m(x,t)>\epsilon_0\quad \text{for any}\quad (x,t)\in[0,\delta]\times[t_0-\delta_0,t_0+\delta_0].$$ 
Fix $\eta_2\in C^\infty([0,T])$ to be  a positive $T$-periodic function such that
\begin{equation}\label{theta1}
 \left(\log\eta_2(t)\right)'>\|V(\cdot,t)\|_{L^{\infty}}+|\hat{V}(1)|\quad\text{for }\,\,t\in [0,t_0-\delta]\cup[t_0+\delta,T].
\end{equation}

We then define, for $(x,t)\in[0,\delta]\times [0,T]$,
$$\overline{\varphi}(x,t):=\eta_2(t)\cdot e^{-M_2x}.$$
On $[0,\delta]\times [t_0-\delta_0,t_0+\delta_0]$, since $\partial_x m(x,t)>\epsilon_0$, by  straightforward computations we deduce
\begin{equation*}
\begin{split}
\mathcal{L}_D\overline{\varphi}
\geq \Big[(\log\eta_2)'-D M^2_2+M_2 \epsilon_0-V\Big]\overline{\varphi},
\end{split}
\end{equation*}
whence by choosing $M_2$  large and then choosing $D$  small, we have $\mathcal{L}_D\overline{\varphi}\geq \hat{V}(1)\overline{\varphi}$.

On the other hand, on  $[0,\delta]\times\left( [0,t_0-\delta]\cup[t_0+\delta,T]\right)$, in view of \eqref{theta1} and $-\partial_xm \partial_xm\overline{\varphi}\geq 0$, by letting $D$ be small, we arrive at
\begin{equation*}
\begin{split}
\mathcal{L}_D\overline{\varphi}&\geq\Big[(\log\eta_2)'-D M^2_2-V\Big]\overline{\varphi}\geq \hat{V}(1)\overline{\varphi},
\end{split}
\end{equation*}
whence \eqref{supersoltionlem2.5} is  verified on $[0,\delta]\times [0,T]$.

On  $(\delta,1-\delta)\times[0,T]$, notice  from the definitions of $\overline{\varphi}$ above that $$\partial_{x}(\log\overline{\varphi})(\delta)=\partial_{x}(\log\overline{\varphi})(1-\delta)=-M_2.$$
We can always find $\overline{\varphi}\in C^{2,1}([\delta,1-\delta]\times[0,T])$ such that $\overline{\varphi}(\cdot,0)=\overline{\varphi}(\cdot,T)$ and
$$
\partial_{x}\log\overline{\varphi}\leq-M_2 \quad \text{and}\quad |\partial_t\log\overline{\varphi}|\leq 2
\left\||(\log f_1)'|+|(\log \eta_2)'|\right\|_{L^\infty},
$$
and then  \eqref{supersoltionlem2.5} can be verified directly  by further choosing   $M_2$ large and  $D$ small.

Therefore, such a super-solution $\overline{\varphi}$ defined above satisfies \eqref{supersoltionlem2.5}, and  Proposition \ref{appendixprop} concludes the proof.
\end{proof}

\begin{corollary}\label{coro0001}
Assume
$V(x,t)=V(x)$ and $\partial_xm(x,t)=m'(x)$. Suppose that $m'(x)>0$ for all $x\in(0,1)$. Then we have
$$\lim_{D\rightarrow 0}\lambda (D)=\min\left\{V(0)+[m'']_{+}(0),\,\, V(1)\right\}.$$
\end{corollary}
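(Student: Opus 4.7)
The plan is to deduce Corollary \ref{coro0001} directly from Lemma \ref{sdlem5} by specializing to the time-independent setting. Concretely, since the $T$-periodicity requirement in \eqref{SD_eq 1} is satisfied trivially by any $t$-independent coefficients, the hypotheses $V(x,t)=V(x)$ and $\partial_x m(x,t)=m'(x)$ together with $m'(x)>0$ on $(0,1)$ reproduce exactly the standing assumption $\partial_x m(x,t)>0$ on $(0,1)\times[0,T]$ in Lemma \ref{sdlem5}. Therefore the principal eigenvalue $\lambda(D)$ of \eqref{SD_eq 1}, which here coincides with that of the elliptic problem \eqref{elliptic}, satisfies
\begin{equation*}
\lim_{D\to 0}\lambda(D)=\min\left\{\hat{V}(0)+[\partial_{xx}\hat{m}]_{+}(0),\,\,\hat{V}(1)\right\}.
\end{equation*}

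It remains only to translate the right-hand side back into the notation of the corollary. Since $V$ and $m$ are time-independent, the temporal averages reduce to $\hat V(0)=V(0)$, $\hat V(1)=V(1)$, and at any point where the classical second derivative exists one has $\partial_{xx}\hat m=m''$. Moreover, the boundary convention \eqref{boundarycondition} applied to $\hat m(x)=m(x)$ yields $\partial_{xx}\hat m(0)=+\infty$ when $m'(0^+)>0$ and $\partial_{xx}\hat m(0)=m''(0)$ when $m'(0^+)=0$, which agrees with the convention used in Theorem \ref{independentt} for $m''(0)$. Substituting these identities into the preceding limit produces
\begin{equation*}
\lim_{D\to 0}\lambda(D)=\min\left\{V(0)+[m'']_{+}(0),\,\,V(1)\right\},
\end{equation*}
which is the desired formula. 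Since the entire argument is a direct invocation of Lemma \ref{sdlem5}, there is no genuine obstacle; the only subtlety is verifying that the boundary convention \eqref{boundarycondition} matches the one implicit in the statement of the corollary, which we have just checked.
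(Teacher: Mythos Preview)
Your proposal is correct and is exactly the approach the paper takes: the corollary is stated immediately after Lemma~\ref{sdlem5} with no separate proof, precisely because it is the time-independent specialization of that lemma, with the temporal averages $\hat V$ and $\partial_{xx}\hat m$ collapsing to $V$ and $m''$. Your check of the boundary convention \eqref{boundarycondition} is also in line with how the paper uses it.
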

\begin{remark}
{\rm Corollary {\rm \ref{coro0001}} cannot be covered by Theorem {\rm \ref{independentt}}. 
It  also provides an example such that Theorem 1.2 in \cite{CL2012}  fails without the assumption  $|\nabla m|\neq 0$ on $\partial\Omega$ therein.}
\end{remark}

To establish Theorem \ref{thm1.1}, we  prepare the following
\begin{lemma}\label{sdlemdegen}
Given any $0\leq\underline{\kappa}<\overline{\kappa}\leq 1$, let $\lambda(D)$ be the principal eigenvalue of the problem
\begin{equation}\label{SD_eq 2}
\begin{cases}
\partial_t\varphi-D\partial_{xx}\varphi+V\varphi=\lambda(D) \varphi & \text{in }(\underline{\kappa},\overline{\kappa})\times [0,T],\\
c_1\partial_x\varphi(\underline{\kappa},t)-(1-c_1)\varphi(\underline{\kappa},t)=0 & \text{on } [0,T],\\
c_2\partial_x\varphi(\overline{\kappa},t)+(1-c_2)\varphi(\overline{\kappa},t)=0 & \text{on } [0,T],\\
\varphi(x,0)=\varphi(x,T) & \text{on } [\underline{\kappa},\overline{\kappa}],
\end{cases}
\end{equation}
where $c_1,c_2\in [0,1]$. Then we have
$$\lim_{D\rightarrow 0}\lambda (D)=\min_{x\in[\underline{\kappa},\overline{\kappa}]}\hat{V}(x).$$
\end{lemma}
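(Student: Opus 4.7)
The argument follows the paper's sub- and super-solution paradigm, combined with the generalized comparison result in Proposition~\ref{appendixprop}. Since equation \eqref{SD_eq 2} contains no drift term, both constructions are considerably simpler than those in Propositions~\ref{sdlem1}--\ref{sdlem2}. The plan is to establish matching upper and lower bounds on $\lim_{D\to 0}\lambda(D)$ via explicit barriers, then let $\epsilon\to 0$.

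For the upper bound, fix $\epsilon>0$ and, using continuity of $\hat V$, pick $x_*\in(\underline\kappa,\overline\kappa)$ with $\hat V(x_*)<\min_{[\underline\kappa,\overline\kappa]}\hat V+\epsilon$ (perturbing slightly into the open interval if the minimum lies on the boundary). Choose $\delta>0$ so small that $[x_*-\delta,x_*+\delta]\subset(\underline\kappa,\overline\kappa)$ and $|V(x,t)-V(x_*,t)|<\epsilon$ on this strip, and set
\[
\underline\varphi(x,t):=f_{x_*}(t)\cdot\underline z(x),\qquad \underline z(x):=\begin{cases}\cos\bigl(\tfrac{\pi(x-x_*)}{2\delta}\bigr)& |x-x_*|\le\delta,\\ 0 & \text{otherwise},\end{cases}
\]
with $f_{x_*}$ from \eqref{deff}. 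Since $\underline z''/\underline z=-\pi^2/(4\delta^2)$ on the support, a direct calculation gives $\mathcal{L}_D\underline\varphi/\underline\varphi=\hat V(x_*)+[V(x,t)-V(x_*,t)]+D\pi^2/(4\delta^2)\le\min\hat V+3\epsilon$ on the interior of the support for $D$ small. The derivative $\underline z'$ has an upward jump at each of $x_*\pm\delta$, so $\underline\varphi$ is a weak sub-solution in the sense of Definition~\ref{appendixldef} with $\mathbb X=\{x_*\pm\delta\}$, and the Robin conditions hold trivially because $\underline\varphi\equiv 0$ near the endpoints. Proposition~\ref{appendixprop} then yields $\limsup_{D\to 0}\lambda(D)\le\min\hat V+3\epsilon$.

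For the lower bound, first approximate $V$ by a smooth $\widetilde V\in C^{2,1}([\underline\kappa,\overline\kappa]\times[0,T])$, $T$-periodic in $t$, with $\|V-\widetilde V\|_{C^0}<\epsilon$; by continuity of the principal eigenvalue in the zeroth-order coefficient it suffices to prove the bound with $\widetilde V$ in place of $V$. Set $\widetilde f_x(t):=\exp\bigl[-\int_0^t\widetilde V(x,s)\,\mathrm{d}s+t\widehat{\widetilde V}(x)\bigr]$, so that $\partial_t\log\widetilde f_x=\widehat{\widetilde V}(x)-\widetilde V(x,t)$, and define
\[
\overline\varphi(x,t):=\widetilde f_x(t)\cdot e^{M(x-c)^2/2},\qquad c:=\tfrac{\underline\kappa+\overline\kappa}{2},
\]
with $M>0$ to be chosen. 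A direct calculation gives
\[
\frac{\mathcal{L}_D\overline\varphi}{\overline\varphi}=\widehat{\widetilde V}(x)+[V(x,t)-\widetilde V(x,t)]-D\Bigl[\partial_{xx}\log\widetilde f_x+M+\bigl(\partial_x\log\widetilde f_x+M(x-c)\bigr)^2\Bigr],
\]
in which the bracket is bounded in $(x,t)$ by a constant depending only on $M$ and $\|\widetilde V\|_{C^{2,1}}$. Fix $M$ large so that $\partial_x\log\overline\varphi$ is strongly negative at $\underline\kappa$ and strongly positive at $\overline\kappa$ uniformly in $t$ -- feasible because the weight contributes $-M(\overline\kappa-\underline\kappa)/2$ and $+M(\overline\kappa-\underline\kappa)/2$ at the two endpoints, dominating the bounded quantity $\partial_x\log\widetilde f_x(\cdot,t)$ -- so that the strict super-solution Robin inequalities $c_1\partial_x\overline\varphi(\underline\kappa,t)-(1-c_1)\overline\varphi(\underline\kappa,t)<0$ and $c_2\partial_x\overline\varphi(\overline\kappa,t)+(1-c_2)\overline\varphi(\overline\kappa,t)>0$ hold for every $t\in[0,T]$. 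Then take $D$ small so that $D\cdot[\text{bracket}]<\epsilon$; combined with $|V-\widetilde V|<\epsilon$ and $|\widehat{\widetilde V}-\hat V|<\epsilon$, this gives $\mathcal{L}_D\overline\varphi/\overline\varphi\ge\min\hat V-3\epsilon$. Proposition~\ref{appendixprop} then delivers $\liminf_{D\to 0}\lambda(D)\ge\min\hat V-4\epsilon$, where the extra $\epsilon$ absorbs the perturbation from $V$ to $\widetilde V$ in the eigenvalue itself, and $\epsilon\to 0$ finishes the proof.

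The main technical point is matching the Robin conditions for arbitrary $c_1,c_2\in[0,1]$ in the super-solution sense: the Gaussian-in-$x$ weight $e^{M(x-c)^2/2}$ is introduced precisely to force $(\log\overline\varphi)_x$ to be large and of the correct sign at both endpoints, while the preliminary smoothing of $V$ is what keeps the spatial derivatives of $\log\widetilde f_x$ bounded uniformly in $D$ so that $M$ can be fixed once $\epsilon$ is fixed and the resulting interior error is only $O(D)$ and hence harmless as $D\to 0$.
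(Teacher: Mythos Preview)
Your proof is correct and follows essentially the same approach as the paper: the upper bound uses the identical cosine-bump sub-solution $f_{x_*}(t)\cos(\pi(x-x_*)/(2\delta))$ supported in the interior, and the lower bound smooths $V$ to $V_\epsilon\in C^{2,1}$ and takes the super-solution $\exp\bigl[-\int_0^t V_\epsilon(x,s)\,\mathrm{d}s+t\hat V_\epsilon(x)\bigr]\beta_\epsilon(x)$, with the paper merely asserting that a positive $\beta_\epsilon\in C^2$ can be chosen to satisfy the Robin inequalities while you explicitly take $\beta_\epsilon(x)=e^{M(x-c)^2/2}$. The aside about ``continuity of the principal eigenvalue in the zeroth-order coefficient'' and the resulting extra $\epsilon$ are unnecessary, since your displayed computation already keeps $V$ in the operator and directly yields $\mathcal{L}_D\overline\varphi/\overline\varphi\ge\min\hat V-3\epsilon$.
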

\begin{remark}
{\rm Lemma {\rm\ref{sdlemdegen}} is proved in Lemma {\rm2.4(c)} of \cite{Hutson2001} for the case $c_1=c_2=1$.}
\end{remark}
\begin{proof}[Proof of Lemma {\rm\ref{sdlemdegen}}]
For the upper bound, it suffices to claim that
$$\limsup_{D\rightarrow 0}\lambda (D)\leq V(\tilde{x})\quad \text{for any}\,\, \tilde{x}\in (\underline{\kappa},\overline{\kappa}).$$
Indeed, we follow the ideas as in  Proposition \ref{sdlem1} and define a sub-solution
\begin{equation}\label{subsolutiondeg}
  \underline{\varphi}(x,t):=f_{\tilde{x}}(t)\cdot \tilde{z}(x)
\end{equation}
 with $f_{\tilde{x}}(t)$ defined in \eqref{deff} with $x=\tilde{x}$
and
$$\tilde{z}(x)=\begin{cases}
\cos \left( \frac{\pi}{2\tilde{\delta}}(x-\tilde{x})\right)
& \text{ on } [\tilde{x}-\tilde{\delta},\tilde{x}+\tilde{\delta}],\\
0,& \text{ on } [0,\tilde{x}-\tilde{\delta})\cup(\tilde{x}+\tilde{\delta},1].
\end{cases}$$
Here $\tilde{\delta}$ is chosen such that $|V(x,t)-V(\tilde{x},t)|<\epsilon/2$ in $[\tilde{x}-\tilde{\delta},\tilde{x}+\tilde{\delta}]\times[0,T]$ for any given $\epsilon>0$. One may verify readily that
\begin{equation*}
\mathcal{L}_D\underline{\varphi}\leq \left[\hat{V}(\tilde{x})+\epsilon\right]\underline{\varphi},
\end{equation*}
so that the upper bound follows from Proposition \ref{appendixprop}.

It remains to prove
\begin{equation}\label{liu000029}
\liminf_{D\rightarrow 0}\lambda (D)\geq\min_{x\in[\underline{\kappa},\overline{\kappa}]}\hat{V}(x).
\end{equation}
For any $\epsilon>0$,  we choose some  $T$-periodic function $V_\epsilon\in C^{2,1}([\underline{\kappa},\,\overline{\kappa}]\times[0,T])$ such that
$$\left\|V_\epsilon-V\right\|_{L^\infty([0,1]\times[0,T])}\leq \epsilon.$$
Then we define $T$-periodic function $\varphi_\epsilon$ by
\begin{equation}\label{liu000028}
\varphi_\epsilon(x,t):=\mbox{exp} \left[-\int_0^t V_\epsilon(x,s)\mathrm{d}s+t\hat{V_\epsilon}(x)\right]\beta_\epsilon (x),
\end{equation}
where  $\beta_\epsilon\in C^2([\underline{\kappa},\,\overline{\kappa}])$ is a positive function  and is chosen such that
\begin{equation}\label{liu000030}
c_1\partial_x\varphi_\epsilon(\underline{\kappa},t)- (1-c_1) \varphi_\epsilon(\underline{\kappa},t)\leq0 \quad\text{and}\quad c_2\partial_x\varphi_\epsilon(\overline{\kappa},t)+(1-c_2) \varphi_\epsilon(\overline{\kappa},t)\geq 0.
\end{equation}

By \eqref{liu000028} and the definition of $V_\epsilon$, we may choose  $D$ small to derive that
\begin{equation*}
\begin{split}
\partial_t\varphi_\epsilon-D\partial_{xx}\varphi_\epsilon+V\varphi_\epsilon&=\left[\hat{V}_\epsilon(x)-V_\epsilon(x,t)+V(x,t)\right]\varphi_\epsilon-D\partial_{xx}\varphi_\epsilon\\
&\geq \left[\min_{x\in[\underline{\kappa},\overline{\kappa}]}\hat{V}(x)-3\epsilon\right]\varphi_\epsilon,
\end{split}
\end{equation*}
which together with \eqref{liu000030} implies that $\varphi_\epsilon$  defined by \eqref{liu000028} is a super-solution in the sense of Definition \ref{appendixldef} with $\mathbb{X}=\emptyset$. Thus \eqref{liu000029} follows from Proposition \ref{appendixprop}, and the proof of Lemma \ref{sdlemdegen} is now complete.
\end{proof}

We are now in a position to prove Theorem \ref{thm1.1}.

\begin{proof}[Proof of Theorem {\rm\ref{thm1.1}}]
The proof can be carried out by the same ideas as in Propositions \ref{sdlem1} and \ref{sdlem2} with the help of Lemmas \ref{sdlem5} and \ref{sdlemdegen}. Here we just outline it for completeness.

\medskip
\noindent {\bf Step 1.} We establish the upper bound of $\limsup_{D\rightarrow 0}\lambda(D)$.
First, using a similar argument as in Lemma \ref{sdlemdegen}, one can  establish
$$
\limsup_{D\rightarrow 0}\lambda(D)\leq\min_{i\in\mathbf{B}}\left\{\min_{x\in[\kappa_i,\kappa_{i+1}]}\hat{V}(x)\right\} 
$$
by constructing a suitable sub-solution like \eqref{subsolutiondeg}. Similarly, the estimate
 $$
\limsup_{D\rightarrow 0}\lambda(D)\leq \min\left\{\hat{V}(0)+[\partial_{xx}\hat{m}]_+(0),\,\, \hat{V}(1)+[\partial_{xx}\hat{m}]_+(1)\right\}
$$
 can also be proved; the details are omitted here. It remains to show
 \begin{equation}\label{upperestimate}
   \limsup_{D\rightarrow 0}\lambda(D)\leq\hat{V}(\kappa_i)+[\partial_{xx}\hat{m}]_+(\kappa_i)\quad\text{ for all }1\leq i\leq N.
 \end{equation}

Fix any $\epsilon>0$.  Choose some small $\delta>0$ such that
$|V(x,t)-V(\kappa_i,t)|<\epsilon/2$ on $[\kappa_i-\delta,\kappa_i+\delta]\times[0,T]$ for all $1\leq i\leq N$.
To prove \eqref{upperestimate}, we define
$$\underline{\varphi}_i(x,t):=\begin{cases}\smallskip
f_{\kappa_i}(t)\cdot \underline{z}(x) & \text{ if } \partial_{xx}\hat{m}(\kappa_i)\leq0,\\
f_{\kappa_i}(t)\cdot\underline{\psi}_D(x,t) & \text{ if } \partial_{xx}\hat{m}(\kappa_i)>0,
\end{cases}$$
where $f_{\kappa_i}$ and $\underline{z}$ are defined respectively  by \eqref{deff} and \eqref{underlinez}, and $\underline{\psi}_D$ denotes the principal eigenfunction of \eqref{auxiliaryproblem2.200} with $\kappa=\kappa_i$. The same arguments as in Step 1 of Propositions \ref{sdlem1} and \ref{sdlem2}  allow us to verify
that such a function $\underline\varphi_i$ defines a sub-solution in the sense of Definition \ref{appendixldef} such that for sufficiently small  $D$,
 \begin{equation*}
 \begin{cases}
 \smallskip
 \mathcal{L}_D\underline{\varphi}_i\leq\left[\hat{V}(\kappa_i)+[\partial_{xx}\hat{m}]_+(\kappa_i)+2\epsilon\right]\underline{\varphi}_i
 &\text {in}\,\,((0,1)\setminus\{\kappa_i\pm\delta\})\times[0,T],\\
 \smallskip
 \partial_x\underline{\varphi}_i(0,t)=\partial_x\underline{\varphi}_i(1,t)=0  &\text {on}\,\,[0,T], \\
 \underline{\varphi}_i(x,0)=\underline{\varphi}_i(x,T) &\text{on}\,\,(0,1).
 \end{cases}
\end{equation*}
Then \eqref{upperestimate} is a direct consequence of  Proposition \ref{appendixprop}.

\medskip
\noindent {\bf Step 2. }
We  establish the lower bound of $\liminf_{D\rightarrow 0}\lambda(D)$.
It suffices to find a super-solution $\overline{\varphi}\in C([0,1]\times[0,T])$ satisfying \eqref{supersoltion}  with $\lambda_{\min}$ being replaced by the right hand side of \eqref{limitlambdaD} and  $\mathbb{X}$ will be determined later.
Recall the sets $\mathbf{A}$ and $\mathbf{B}$  defined in the statement of Theorem \ref{thm1.1}.
 The construction of $\overline\varphi$ can be given as follows; see Fig.\ref{figure1.4} for an illustrated example.  
\begin{figure}[http!!]
  \centering
\includegraphics[height=2.5in]{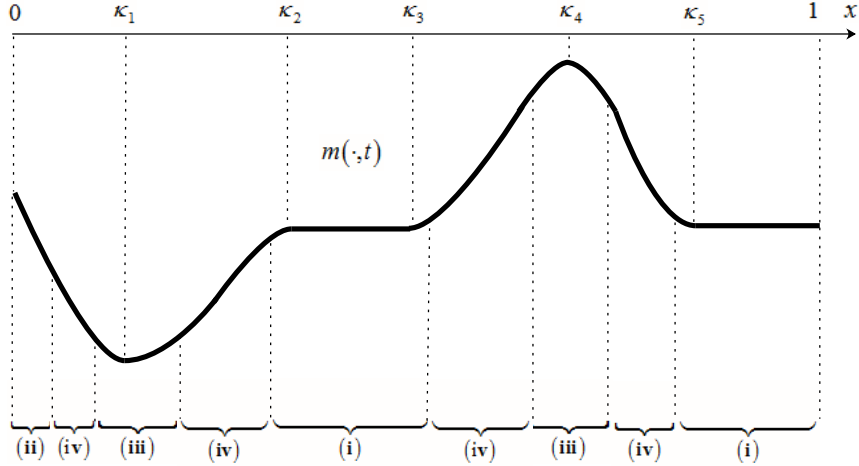}
  \caption{The black solid curve corresponds to an example of $m$ for fixed $t$.
 The super-solution $\overline\varphi$ is constructed respectively on different regions {\rm(i)-(v)}.
 }\label{figure1.4}
  \end{figure}

\noindent {\rm (i)} On $([\kappa_{i}-\delta,\kappa_{i+1}+\delta]\cap[0,1])\times[0,T]$ for $0\leq i\leq N$ and $i\in\mathbf{B}$ with  the small constant  $\delta>0$ to be determined later, we define $\overline\varphi$ as in the form of \eqref{liu000028} in Lemma \ref{sdlemdegen} with  $$\underline{\kappa}=\kappa_{i}-\delta, \quad \overline{\kappa}=\kappa_{i+1}+\delta,\quad\text{and}\quad c_1=c_2=\tfrac{1}{2}.$$

\noindent {\rm (ii)} On $([0,\frac{\kappa_1}{3}]\cup[\frac{2+\kappa_N}{3},1])\times[0,T]$, if $0\not\in\mathbf{B}$ or $N\not\in\mathbf{B}$, then such a super-solution $\overline{\varphi}$ can be constructed by adapting the same arguments as in the proof of Lemma \ref{sdlem5}; Otherwise, it has been constructed in {\rm (i)}.

\noindent {\rm (iii)} On $[\frac{\kappa_{i-1}+2\kappa_i}{3},\frac{2\kappa_{i}+\kappa_{i+1}}{3}]\times[0,T]$ for 
$i\in\mathbf{A}$ and $i-1\in\mathbf{A}$,  one constructs  $\overline{\varphi}$ by  Step 2 of  Proposition \ref{sdlem1} (with $\kappa=\kappa_i$) for the case $\partial_{xx}\hat{m}(\kappa_i)\leq 0$, and by Part II of  Proposition \ref{sdlem2} (with $\kappa=\kappa_i$)   for the  case $\partial_{xx}\hat{m}(\kappa_i)>0$.

\noindent {\rm (iv)} On the remaining region $\Omega\times [0,T]$, where
$$
\Omega=
\begin{cases}
\smallskip
(\frac{2\kappa_{i}+\kappa_{i+1}}{3}, \frac{\kappa_{i}+2\kappa_{i+1}}{3})&\text{ for } i\in\mathbf{A} \text{ and }i-1\in\mathbf{A},\\
\smallskip
(\kappa_{i-1}+\delta,\frac{2\kappa_{i}+\kappa_{i+1}}{3}) &\text{ for } i\in\mathbf{A} \text{ and }i-1\in\mathbf{B},\\
(\frac{\kappa_{i-1}+2\kappa_i}{3},\kappa_{i}-\delta) &\text{ for } i\in\mathbf{B},
\end{cases}
$$

we  construct $\overline\varphi$  by monotonically connecting the endpoints on $\partial \Omega$, such that
\begin{itemize}
  \item [{\rm(a)}] $\overline{\varphi}$ is continuous at  $\partial\Omega\times[0,T]$;
  \item [{\rm(b)}] $|\partial_x(\log\overline{\varphi})|>M_3$ for some large $M_3$;
  \item [{\rm(c)}] $\partial_x(\log\overline\varphi)(x^+,\cdot)<\partial_x(\log\overline\varphi)(x^-,\cdot)$ for $x\in \partial\Omega$.
\end{itemize}

Define $\mathbb{X}= \partial\Omega$.
By Lemmas \ref{sdlem5} and \ref{sdlemdegen},  explicit calculations as in Propositions \ref{sdlem1} and \ref{sdlem2} imply that we may choose $\delta$ smaller if necessary such that the super-solution $\overline\varphi$ defined above satisfies \eqref{supersoltion} with $\lambda_{\min}$ being replaced by the right hand side of \eqref{limitlambdaD}. Then  the lower bound of $\liminf_{D\rightarrow 0}\lambda(D)$ can be established by Proposition \ref{appendixprop}.
 The proof 
 is now complete.
\end{proof}

\smallskip
\section{Proof of Theorem \ref{sdthmP}}\label{S3}
In this section, we study the case when the ODE \eqref{eq:P} possesses finite periodic solutions and  establish Theorem \ref{sdthmP} with the help of Theorem \ref{thm1.1}. 

\begin{proof}[Proof of Theorem \rm\ref{sdthmP}]
 We first prove part {\rm(i)} of Theorem \ref{sdthmP}. Let $\{\kappa_i\}_{0\leq i\leq N+1}$ be any strictly increasing sequence  such that 
$$0=\kappa_0<\kappa_1<\ldots<\kappa_{N}<\kappa_{N+1}=1.$$
 Fix small $\delta$ such that $0<\delta<\min\limits_{0\leq i\leq N}(\kappa_{i+1}-\kappa_i)/3$ and
\begin{equation}\label{deltadef}
 \partial_{xx}m\left(x,t\right)\neq 0\quad\text{ for all }\,\, x\in[P_i(t)-\delta,P_i(t)+\delta],\, t\in[0,T], \,1\leq i\leq N.
\end{equation}

To ``straighten the periodic solution $P_i(t)$'', we first define a $C^{2,1}$-diffeomorphism  $\Psi:[0,1]\times[0,T]\rightarrow[0,1]$
such that  $\partial_y\Psi(y,t)\neq0$ 
and
\begin{equation}\label{eq:Psi}
\Psi(y,t)=
  \begin{cases}
  \smallskip
y-\kappa_i+P_i(t) & \text{ for  } y\in[\kappa_i-\delta,\kappa_i+\delta],\, t\in[0,T], \,1\leq i\leq N, \\
y  & \text{ for }y\in [0,\delta]\cup[1-\delta,1], \,t\in[0,T].
 \end{cases}
 \end{equation}
Define $\widetilde{V}(y,t)=V\left(\Psi(y,t),t\right)$.
By direct calculations,  $\lambda(D)$ is also the principal eigenvalue of
\begin{equation}\label{ldn_trans}
\begin{cases}
 \medskip
\partial_{t}\widetilde{\varphi}-D\frac{\partial_{yy}\widetilde{\varphi}}{(\partial_y\Psi)^{2}}- \Big[\partial_y\widetilde{m}
-D\frac{\partial_{yy}\Psi}{(\partial_y\Psi)^{3}}\Big]\partial_y\widetilde{\varphi}+\widetilde{V}(y,t)\widetilde{\varphi}
=\lambda(D)\widetilde{\varphi} \ \ &\text{ in } (0,1)\times [0,T],\\
\medskip
 \partial_{y}\widetilde{\varphi}(0,t)=\partial_{y}\widetilde{\varphi}(1,t)=0 \ \ & \text{ on } [0,T], \\
 \widetilde{\varphi}(y,0)=\widetilde{\varphi}(y,T) &\text{ on } (0,1),
\end{cases}
 \end{equation}
for  which the principal eigenfunction becomes $\widetilde{\varphi}(y,t)=\varphi\left(\Psi(y,t),t\right)$. Here   $\varphi$ denotes  the principal eigenfunction  of problem \eqref{SD_eq 1},  and $\widetilde{m}$ is given by
\begin{equation}\label{widehatm}
  \partial_y\widetilde{m}(y,t)=\frac{\partial_{x}m\left(\Psi(y,t),t\right)}{\partial_y\Psi}+\frac{\partial_{t}\Psi}{\partial_y\Psi}.
\end{equation}

 In what follows, we focus on problem \eqref{ldn_trans}, and divide the proof into several steps.

\medskip
\noindent {\bf Step 1.} We  show  that the ODE problem
 \begin{equation}\label{eq:hatP}
  \begin{cases}
  \smallskip
\dot{\widetilde{P}}(t)=-\partial_y\widetilde{m}(\widetilde{P}(t),t),\\
\widetilde{P}(t)=\widetilde{P}(t+T)
 \end{cases}
 \end{equation}
has only $N$ periodic solutions  $\widetilde{P}_i(t)\equiv\kappa_i$, and $\partial_{yy}\widetilde{m}\left(y,t\right)\neq 0$ for all $(y,t)\in[\kappa_i-\delta,\kappa_i+\delta]\times[0,T]$ and $i=1,\ldots,N$. 

First, we claim that  $\widetilde{P}_i(t)\equiv\kappa_i$ is a solution of \eqref{eq:hatP}. This is due to the following calculations:
\begin{equation*}
\begin{split}
  \partial_y\widetilde{m}(\kappa_i,t)&=\frac{\partial_{x}m(\Psi(\kappa_i,t),t)}{\partial_y\Psi(\kappa_i,t)}+\frac{\partial_{t}\Psi(\kappa_i,t)}{\partial_y\Psi(\kappa_i,t)}\\
&=\partial_{x}m(P_i(t),t)+\dot{P}_i(t)=0,
\end{split}
\end{equation*}
where the first equality follows from  \eqref{widehatm},
and the second equality is due to \eqref{eq:Psi}.

Suppose on the contrary that there exists a periodic solution $\widetilde{P}(t)$ such that $\widetilde{P}(t)\not\equiv\kappa_i$ for any $1\leq i\leq N$. Then by \eqref{eq:Psi} and \eqref{widehatm}, one can verify  that $\Psi(\widetilde{P}(t),t)\not\equiv P_i(t)$ is a periodic solution to \eqref{eq:P} by the following calculations:
\begin{equation*}
\begin{split}
  \dot{\Psi}(\widetilde{P}(t),t)=
\dot{\widetilde{P}}(t)\partial_y\Psi+\partial_t\Psi&=-\partial_y\widetilde{m}(\widetilde{P}(t),t)\partial_y\Psi+\partial_t\Psi\\
&=-\partial_{x}m(\Psi(\widetilde{P}(t),t),t)-\partial_{t}\Psi+\partial_{t}\Psi\\
&=-\partial_{x}m(\Psi(\widetilde{P}(t),t),t),
\end{split}
\end{equation*}
which is a contradiction. Therefore, \eqref{eq:hatP} has only $N$ periodic solutions  $\widetilde{P}_i(t)\equiv\kappa_i$ ($i=1,\ldots,N$). Furthermore,  from \eqref{deltadef} and \eqref{eq:Psi}, it is easily seen that $\partial_{yy}\widetilde{m}\left(y,t\right)\neq 0$ on $[\kappa_i-\delta,\kappa_i+\delta]\times[0,T]$,  
which completes Step 1.

%
In the sequel, we aim to find a proper $C^{2,1}$-transformation $\Phi:[0,1]\times\mathbb{R}\rightarrow[0,1]$ such that $\partial_z \Phi>0$, and
if for some $\overline{m}\in C^{2,1}([0,1]\times[0,T])$ satisfying
\begin{equation}\label{defoverlinem}
\partial_z\overline{m}(z,r)= \frac{\partial_y\widetilde{m}\left(\Phi(z,r),r\right)}{\partial_z\Phi}+\frac{\partial_r\Phi}{\partial_z\Phi},
\end{equation}
 then $\partial_z\overline{m}> 0$ or $\partial_z\overline{m}< 0$ holds on $(\kappa_i,\kappa_{i+1})\times[0,T]$
for each $0\leq i\leq N$.
Then we may apply Theorem \ref{thm1.1} to complete the proof.

Fix any $0\leq i\leq N$. We assume without loss of generality that $\partial_{yy}\widetilde{m}\left(\kappa_i,t\right)<0$, so that 
$\partial_{y}\widetilde{m}\left(\kappa_i+\delta/2,t\right)<0$. For any $s\in\mathbb{R}$, denote by $q_s(t)$ the unique solution of 
\begin{equation}\label{eq:q}
  \begin{cases}
  \smallskip
\dot{q}(t)=-\partial_y\widetilde{m}(q(t),t+s),\\
 q(0)=\kappa_i+\delta/2,
 \end{cases}
 \end{equation}
where $\widetilde{m}$ is given by \eqref{widehatm}.
Obviously, $q_s(t)=q_{s+T}(t)$ for all $s,t\in \mathbb{R}$.
We define
\begin{equation}\label{defQt}
 Q(t):=\left\{(q_{r-t}(t),r):r\in\mathbb{R}\right\}, 
\end{equation}
which is  a continuous curve and is referred  as the isochron of \eqref{eq:q}.

\medskip
\noindent {\bf Step 2.} Fix any $0<t_1<t_2$. We show  that $Q(t_1)\prec Q(t_2)$ (see Fig.\ref{figure2}) in the sense that
\begin{equation}\label{monotonicty1}
  q_{r-t_1}(t_1)<q_{r-t_2}(t_2)\quad\text{ for any } \,\,r\in \mathbb{R}.
\end{equation}

We argue by contradiction by assuming $Q(t_1)\cap Q(t_2)\neq\emptyset$ or $Q(t_2)\prec Q(t_1)$.

{\rm (i)} If $Q(t_1)\cap Q(t_2)\neq\emptyset$, then by definition \eqref{defQt}, there exists some $r_0\in\mathbb{R}$ such that
\begin{equation}\label{relation1}
  q_{r_0-t_1}(t_1)=q_{r_0-t_2}(t_2).
\end{equation}
Then we define
$$\overline{q}(t):=q_{r_0-t_1}(t-r_0+t_1)\quad\text{and}\quad\underline{q}(t):=q_{r_0-t_2}(t-r_0+t_2),$$
both of which satisfy $\dot{q}(t)=-\partial_y\widetilde{m}\left(q(t),t\right)$, and
\begin{equation}\label{initialcondition}
  \overline{q}(r_0-t_1)=\underline{q}(r_0-t_2)=\kappa_i+\delta/2\quad\text{ and }\quad \overline{q}(r_0)=\underline{q}(r_0),
\end{equation}
where $\overline{q}(r_0)=\underline{q}(r_0)$ follows from \eqref{relation1}. In view of $t_1<t_2$, we have $r_0-t_1>r_0-t_2$. Thanks to the uniqueness of solutions to $\dot{q}(t)=-\partial_y\widetilde{m}\left(q(t),t\right)$, we conclude from \eqref{initialcondition} that
 $ \overline{q}(t)=\underline{q}(t)$ for any $t\in [r_0-t_1,r_0]$,
and particularly, $\underline{q}(r_0-t_1)=\overline{q}(r_0-t_1)=\kappa_i+\delta/2=\underline{q}(r_0-t_2)$, i.e.
$q_{r_0-t_2}(t_2-t_1)=\kappa_i+\delta/2=q_{r_0-t_2}(0),$
 which contradicts $\partial_{y}\widetilde{m}\left(\kappa_i+\delta/2,t\right)<0$.

{\rm (ii)} If $Q(t_2)\prec Q(t_1)$, then given any $(q_{r_1-t_1}(t_1),r_1)\in Q(t_1)$, there is some $t_0\in(0,t_1)$ such that $(q_{r_1-t_1}(t_0),r_2)\in Q(t_2)$, where $r_2=r_1-(t_1-t_0)$.
By definition \eqref{defQt},  we also have $(q_{r_2-t_2}(t_2),r_2)\in Q(t_2)$, so that
$q_{r_1-t_1}(t_0)=q_{r_2-t_2}(t_2)$.
This, together with $r_2-t_0=r_1-t_1$, leads to $q_{r_2-t_0}(t_0)=q_{r_2-t_2}(t_2)$, whence
$(q_{r_2-t_2}(t_2),r_2)\in Q(t_0)\cap Q(t_2)$, i.e. $Q(t_0)\cap Q(t_2)\neq\emptyset$. Since $t_0<t_2$, we can apply  {\rm (i)} to reach a contradiction.

\medskip
\noindent {\bf Step 3.} We show
$$\lim_{t\to\infty}Q(t)=\{(\kappa_{i+1},r):r\in\mathbb{R}\},$$
in the sense that for any $r\in\mathbb{R}$, $q_{r-t}(t)\to \kappa_{i+1}$ as $t\to \infty$.

 By $\mathbb{M}$ we denote the set of all continuous curves in $[\kappa_i+\delta/2,\kappa_{i+1}]\times[0,T]$.  By Step 2, there is some curve $Q_\infty:=\{(q_\infty(s),s):s\in\mathbb{R}\}\in \mathbb{M}$ such that $Q(t)\to Q_\infty$ as $t\to\infty$.  It suffices to show $q_\infty\equiv\kappa_{i+1}$.
To this end, we  claim that $q_\infty$ is a periodic solution of \eqref{eq:hatP}, and then $q_\infty\equiv\kappa_{i+1}$  is a direct consequence of Step 1.

Indeed, the periodicity of $q_\infty$ is due to the fact that $q_s(t)=q_{s+T}(t)$ for all $s,t\in \mathbb{R}$.  We show that  $q_\infty$ is a solution to \eqref{eq:hatP}. Suppose not, then for given $s_0\in\mathbb{R}$, there exists  some $t_0>s_0$  such that the unique solution $p_{s_0}(t)$ of
\begin{equation*}
  \begin{cases}
  \smallskip
\dot{p}(t)=-\partial_y\widetilde{m}\left(p(t),t+s_0\right),\\
 p(0)=q_\infty(s_0),
 \end{cases}
 \end{equation*}
satisfies
$p_{s_0}(t_0-s_0)\neq q_\infty(t_0)$. Let $t_*=t_0-s_0$.
For any $\Sigma_q:=\{(q(s),s):s\in\mathbb{R}\}\in \mathbb{M}$, we denote by $p_s$ the unique solution of
\begin{equation*}
  \begin{cases}
  \smallskip
\dot{p}(t)=-\partial_y\widetilde{m}\left(p(t),t+s\right),\\
 p(0)=q(s),
 \end{cases}
 \end{equation*}
and define a continuous operator $F:\mathbb{M}\to \mathbb{M}$ by
$$F(\Sigma_q):=\left\{(p_s(t_*),t_*+s):s\in\mathbb{R}\right\}=\left\{(p_{r-t_*}(t_*),r):r\in\mathbb{R}\right\}.$$

It is straightforward to verify that $F(Q(t))=Q(t+t_*)$, and thus
$$F(Q_\infty)=Q_\infty,$$
from which  we deduce  in particular that $p_{t_0-t_*}(t_*)=q_\infty(t_0)$, that is $p_{s_0}(t_0-s_0)=q_\infty(t_0)$, a contradiction. Therefore, $q_\infty$ is a periodic solution of \eqref{eq:hatP}. Step 3 is thereby completed.

\medskip
\noindent {\bf Step 4.} We define the transformation $\Phi$ satisfying $\partial_z\Phi>0$, and for $\overline{m}$ given by \eqref{defoverlinem},
we show that  $\partial_z\overline{m}> 0$ or $\partial_z\overline{m}< 0$ holds in $(\kappa_i,\kappa_{i+1})\times[0,T]$ for each $0\leq i\leq N$.

For any $0\leq i \leq N$, we  define
$\Phi_i:[\kappa_i+\delta/2,\kappa_{i+1}-\delta/2]\times\mathbb{R}\rightarrow[\kappa_i,\kappa_{i+1}]$
such that for any $(z,r)\in [\kappa_i+\delta,\kappa_{i+1}-\delta]\times\mathbb{R}$,
\begin{equation}\label{Phii}
  \Phi_i(z,r)=q_{r-\tau_i(z)}(\tau_i(z)),
\end{equation}
where $q_{r-\tau_i(z)}$ is the solution of \eqref{eq:q} with $s=r-\tau_i(z)$ and $\tau_i(z)$ is determined by
\begin{equation}\label{tkappa}
 q_{-\tau_i(z)}(\tau_i(z))=z.
\end{equation}
Obviously, $\{(\Phi_i(z,r),r):r\in\mathbb{R}\}=Q(\tau_i(z))$. 
It is easily seen that $z\to\tau_i(z)$ is a bijection (where the surjection follows from Step 3), is of class $C^2$ and is increasing (by Step 2), so that
$\Phi_i\in C^{2,1}([\kappa_i+\delta,\kappa_{i+1}-\delta]\times\mathbb{R})$ and  $\partial_z\Phi_i\geq0$  by \eqref{monotonicty1}.


 We claim that for $(z,r)\in(\kappa_i+\delta/2,\kappa_{i+1}-\delta/2)\times\mathbb{R}$,
\begin{equation}\label{liu000014}
    \tau'_i(z)>0 \quad\text{and}\quad \partial_y\widetilde{m}\left(\Phi_i(z,r),r\right)+\partial_r\Phi_i=-\frac{\partial_z\Phi_i}{\tau'_i(z)}.
\end{equation}

For the sake of clarification, write $q_s(t)=q(t;s)$, where  $q_s$ is defined by \eqref{eq:q}. Differentiating both sides of \eqref{tkappa} by $z$, we derive that
$$\Big[\partial_t q_{-\tau_i(z)}(\tau_i(z))-\partial_s q_{-\tau_i(z)}(\tau_i(z))\Big]\tau'_i(z)=1,$$
which implies $\tau'_i(z)\neq 0$, and thus $\tau'_i(z)>0$  since $\tau_i(z)$ is increasing. Similarly, by \eqref{Phii}, we deduce that
$\partial_r\Phi_i(z,r)=\partial_sq_{r-\tau_i(z)}(\tau_i(z))$, and thus
\begin{equation}\label{liu000015}
    \begin{split}
    \partial_z\Phi_i(z,r)&=\Big[\partial_tq_{r-\tau_i(z)}(\tau_i(z))-\partial_sq_{r-\tau_i(z)}(\tau_i(z))\Big]\tau'_i(z)\\
    &=\Big[\partial_tq_{r-\tau_i(z)}(\tau_i(z))-\partial_r\Phi_i(z,r)\Big]\tau'_i(z).
\end{split}
\end{equation}
By the definition of $q_{r-\tau_i(z)}(\tau_i(z))$ in \eqref{eq:q} with $s=r-\tau_i(z)$ and $t=\tau_i(z)$, we note that
$$\partial_t q_{r-\tau_i(z)}(\tau_i(z))=-\partial_y\widetilde{m}(\Phi_i(z,r),r),$$
which, together with \eqref{liu000015}, implies \eqref{liu000014}.

  We then claim that
\begin{equation}\label{liu000020}
     \partial_z\Phi_i(z,r)>0 \quad\text{for any}\quad (z,r)\in(\kappa_i+\delta/2,\kappa_{i+1}-\delta/2)\times\mathbb{R}.
\end{equation}

To this end,  denote by  $\tilde{p}(t;s)$  the unique solution of the  problem
 \begin{equation*}
  \begin{cases}
  \smallskip
\dot{\tilde{p}}(t)=-\partial_y\widetilde{m}\left(\tilde{p}(t),t\right),\\
 \tilde{p}(s)=\kappa_i+\delta/2,
 \end{cases}
 \end{equation*}
 whence by \eqref{eq:q}, we observe that  $q_s(t)=\tilde{p}(t+s;s)$.
For any $\tau\in\mathbb{R}$, we have
$$\tilde{p}(\tau)=-\int_s^\tau\partial_y\widetilde{m}\left(\tilde{p}(t),t\right)\mathrm{d}t+\kappa_i+\delta/2,$$
so that 
$$\partial_s\tilde{p}(\tau)=\partial_y\widetilde{m}\left(\kappa_i+\delta/2,s\right)-\int_s^\tau\partial_{yy}\widetilde{m}\left(\tilde{p}(t),t\right)\partial_s\tilde{p}(t)\mathrm{d}t,$$
and thus $\partial_s\tilde{p}(s)=\partial_y\widetilde{m}\left(\kappa_i+\delta/2,s\right)<0$. We further calculate that
$$\partial_\tau\left(\partial_s\tilde{p}(\tau)\right)=-\partial_{yy}\widetilde{m}\left(\tilde{p}(\tau),\tau\right)\partial_s\tilde{p}(\tau),$$
which implies immediately that for any $r\in \mathbb{R}$,
\begin{equation}\label{liu000022}
    \partial_s\tilde{p}(r) =\partial_s\tilde{p}(s)\exp\left[-\int_{s}^{r}\partial_{yy}\widetilde{m}\left(\tilde{p}(\tau),\tau\right)\mathrm{d}\tau\right]<0.
\end{equation}
By \eqref{Phii} and the fact that  $q_s(t)=\tilde{p}(t+s;s)$, we can see   $\Phi_i(z,r)=\tilde{p}(r;r-\tau_i(z))$,
so that
$$\partial_z\Phi_i(z,r)=-\partial_s\tilde{p}(r)\cdot\tau'_i(z)>0$$
by noting that  $\tau'_i(z)>0$  in \eqref{liu000014} and $\partial_s\tilde{p}(r)<0$ in \eqref{liu000022}.

Then we define  a $C^{2,1}$-transformation $\Phi:[0,1]\times\mathbb{R}\rightarrow[0,1]$ such that
 $\partial_z\Phi>0$ and for any $0\leq i\leq N$,
\begin{equation}\label{defphi}
  \Phi(z,r):=\begin{cases}
  \smallskip
\Phi_i(z,r) &\text{ on } [\kappa_i+\delta_1,\kappa_{i+1}-\delta_1]\times\mathbb{R}, \\
 z  &\text{ on } \left([\kappa_i,\kappa_{i}+\delta/2]\cup[\kappa_{i+1}-\delta/2,\kappa_{i+1}]\right)\times\mathbb{R},\\
 \end{cases}
 \end{equation}
where $\delta_1\in(\delta/2,\delta]$ is  chosen to be   close to $\delta/2$ such that
\begin{equation}\label{delta1}
  \partial_y\widetilde{m}+\partial_r\Phi< 0\,\,\,\,\text{ on }\,\,\left([\kappa_i,\kappa_{i}+\delta_1]\cup[\kappa_{i+1}-\delta_1,\kappa_{i+1}]\right)\times\mathbb{R}. 
\end{equation}
This is possible since  by \eqref{defphi} and Step 1, it follows that
$$\partial_y\widetilde{m}+\partial_r\Phi=\partial_y\widetilde{m}<0 \,\,\text{ on } \left([\kappa_i,\kappa_{i}+\delta/2]\cup[\kappa_{i+1}-\delta/2,\kappa_{i+1}]\right)\times\mathbb{R}.$$

  Let $\overline{m}$ satisfy \eqref{defoverlinem} with $\Phi$ defined by \eqref{defphi}.
For any $z\in [\kappa_i,\kappa_{i}+\delta_1]\cup[\kappa_{i+1}-\delta_1,\kappa_{i+1}]$,  it follows from  \eqref{defoverlinem}, \eqref{defphi}, and \eqref{delta1}  that $\partial_z\overline{m}(z,r)< 0$; For $z\in[\kappa_i+\delta_1,\kappa_{i+1}-\delta_1]$, by \eqref{defphi} we have $\Phi(z,r)=\Phi_i(z,r)$, whence comparing \eqref{defoverlinem} with \eqref{liu000014} gives $\partial_z\overline{m}(z,r)=-\frac{1}{\tau'_i(z)}<0$.
This completes  Step 4.

\medskip
\noindent {\bf Step 5.}  We apply Theorem \ref{thm1.1} to complete the proof. Let the $C^{2,1}$-transformation  $\Phi$ be defined by \eqref{defphi}  in Step 4.
Denote
$$\overline{V}(z,r):=\widetilde{V}\left(\Phi(z,r),r\right)\quad\text{ and }\quad \overline{\varphi}(z,r):=\widetilde{\varphi}\left(\Phi(z,r),r\right),$$
where $\widetilde{V}$ and $\widetilde{\varphi}$ are defined in \eqref{ldn_trans}.
 Using the definition of $\overline{m}$ in \eqref{defoverlinem}, direct calculation enables us to transform \eqref{ldn_trans}  into the following equation:
\begin{equation}\label{ldn_trans2}
\begin{cases}
 \begin{array}{l}
 \smallskip
\partial_{r}\overline{\varphi}-\frac{D\partial_{zz}\overline{\varphi}}{(\partial_y\Psi)^{2}(\partial_z\Phi)^{2}} - \Big[\partial_{z}\overline{m}+
D\eta_3\Big]\partial_z\overline{\varphi}+\overline{V}\overline{\varphi}
=\lambda(D)\overline{\varphi}\end{array}
\ \ &\text{ in } (0,1)\times [0,T],\\
\medskip
 \partial_{z}\overline{\varphi}(0,r)=\partial_{z}\overline{\varphi}(1,r)=0 \ \ & \text{ on } [0,T], \\
 \overline{\varphi}(z,0)=\overline{\varphi}(z,T) &\text{ on } (0,1),
\end{cases}
 \end{equation}
where $\eta_3$ is given by
$$
\eta_3(z,r):=\frac{\partial_{yy}\Psi}{(\partial_y\Psi)^{3}\partial_z\Phi}+\frac{\partial_{zz}\Phi}{(\partial_z\Phi)^{3}(\partial_y\Psi)^2}.
$$

For each  $0\leq i\leq N$, by Step 4,  $\partial_z\overline{m}> 0$ or $\partial_z\overline{m}< 0$ holds for all $z\in(\kappa_i,\kappa_{i+1})$; by the definitions of $\Psi$ and $\Phi$ in \eqref{eq:Psi} and \eqref{defphi},  we find that for any  $z\in[\kappa_i,\kappa_{i}+\delta/2]\cup[\kappa_{i+1}-\delta/2,\kappa_{i+1}]$,  $\partial_{yy}\Psi=\partial_{zz}\Phi=0$, so that $\eta_3(z,r)=0$.
 Therefore, we conclude that for any small $\vartheta>0$, there exists some $\epsilon_0=\epsilon_0(\vartheta)>0$, independent of small $D$, such that
\begin{equation}\label{liu000023}
\partial_{z}\overline{m}+D\eta_3\geq\epsilon_0 \quad\text{or}\quad \partial_{z}\overline{m}+D\eta_3\leq-\epsilon_0\quad \text{on} \,\,\,[\kappa_i+\vartheta,\kappa_{i+1}-\vartheta]\times [0,T].
\end{equation}

Moreover, from \eqref{defoverlinem} and \eqref{defphi}, we observe  that for any $1\leq i\leq N$,
$$\partial_{z}\overline{m}(\kappa_i,r)=\partial_y\widetilde{m}\left(\Phi(\kappa_i,r),r\right)=\partial_y\widetilde{m}\left(\kappa_i,r\right)=0,$$
which implies that $\partial_{z}\overline{m}(\kappa_i,r)+D\eta_3(\kappa_i,r)=0$ since $\eta_3(\kappa_i,r)=0$. Together with \eqref{liu000023}, noting that $\partial_y\Psi=\partial_z\Phi=1$ on  $[\kappa_i,\kappa_{i}+\tilde\vartheta]\cup[\kappa_{i+1}-\tilde\vartheta,\kappa_{i+1}]$ with some $0<\tilde\vartheta\ll 1$ for any $0\leq i \leq N$, we can follow directly
the same proof of Theorem \ref{thm1.1} with $\mathbf{B}=\emptyset$ to \eqref{ldn_trans2} and
deduce that
\begin{equation}\label{liu000017}
\lim_{D\rightarrow 0}\lambda(D)=\min_{0\leq i\leq N+1}\left\{\hat{\overline{V}}(\kappa_i)+[\partial_{xx}\hat{m}]_+(\kappa_i)\right\}.
\end{equation}
Noting that $\hat{\overline{V}}(\kappa_i)=\frac{1}{T}\int_0^T V\left(P_i(s),s\right)\mathrm{d}s$ and
$$\partial_{zz}\overline{m}(\kappa_i,r)=\partial_{yy}\widetilde{m}\left(\kappa_i,r\right)=\partial_{xx}m\left(P_i(r),r\right),$$
part {\rm(i)} of Theorem \ref{sdthmP} follows from \eqref{liu000017}.

Finally, part {\rm(ii)} of Theorem \ref{sdthmP} can be established by  Steps 2-5 with $N=0$. The proof is now complete.
\end{proof}

\medskip

\section{Proof of Theorem \ref{sdthm4}}\label{S4}
This section is devoted to  the case 
$m(x,t)=\alpha b(t)x$ and the proof of Theorem \ref{sdthm4}. 
We start with the existence and uniqueness of $\tilde{P}_\alpha$ defined in Theorem \ref{sdthm4}.
\begin{lemma}\label{existenceandunique}
Let $F$ be defined by \eqref{definition_F}
and $\overline{P}$,$\underline{P}$ be given in Theorem {\rm\ref{sdthm4}}. 
If $\alpha\geq\frac{1}{\overline{P}-\underline{P}}$, then 
 \begin{equation}\label{eq:tildeP}
  \begin{cases}
  \smallskip
\dot{P}(t)=-\alpha F\left(P(t),t\right),\\
P(t)=P(t+T)
 \end{cases}
 \end{equation}
has a unique $T$-periodic solution in
$W^{1,\infty}(\mathbb{R})$.
\end{lemma}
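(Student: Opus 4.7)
The plan is to establish existence of $\tilde P_\alpha$ via a Brouwer fixed-point argument applied to the Poincar\'e map of the ODE on $[0,1]$, and uniqueness by showing that the difference of any two periodic solutions is a non-increasing $T$-periodic function (hence constant) whose constant value must vanish once the hypothesis $\alpha\ge(\overline P-\underline P)^{-1}$ is exploited.

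\emph{Existence.} The function $F$ in \eqref{definition_F} has been defined precisely so that $F(0,t)\le 0\le F(1,t)$ for every $t\in[0,T]$, which makes $[0,1]$ forward invariant for $\dot y=-\alpha F(y,t)$. In the interior of $[0,1]$ the dynamics reads $\dot y=-\alpha b(t)$, while at each endpoint the orbit either leaves immediately or remains stuck, according to the sign of $b(t)$. By splicing interior arcs with boundary constant pieces (equivalently, by invoking the standard theory of reflected one-dimensional ODEs), every initial datum $y_0\in[0,1]$ admits a unique $[0,1]$-valued Lipschitz solution $y(\cdot;y_0)\in W^{1,\infty}(\mathbb R)$ depending continuously on $y_0$. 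The Poincar\'e map $\Phi(y_0):=y(T;y_0)$ is then a continuous self-map of $[0,1]$, and Brouwer yields a fixed point $y_0^\ast$, whence $\tilde P_\alpha(t):=y(t;y_0^\ast)$ solves \eqref{eq:tildeP} in $W^{1,\infty}(\mathbb R)$.

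\emph{Uniqueness.} For two periodic solutions $\tilde P_1,\tilde P_2$, a scalar comparison argument gives $y_0\le y_0'\Rightarrow y(\cdot;y_0)\le y(\cdot;y_0')$, so we may assume $\tilde P_1\le\tilde P_2$ pointwise. A case analysis on whether each $\tilde P_i(t)$ lies in $\{0\}$, $(0,1)$, or $\{1\}$ shows that $F(\tilde P_2,t)\ge F(\tilde P_1,t)$ a.e., hence $D:=\tilde P_2-\tilde P_1$ satisfies $\dot D\le 0$ a.e.; being continuous and $T$-periodic, $D$ must equal a constant $c\ge 0$. Suppose, toward a contradiction, that $c>0$. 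Then $\tilde P_1\le 1-c<1$ and $\tilde P_2\ge c>0$; combining $\dot{\tilde P}_1=\dot{\tilde P}_2$ with the definition of $F$ shows that on $\{\tilde P_1=0\}$ (where $F(\tilde P_2,\cdot)=b$ since $c<1$) equality forces $\min\{b,0\}=b$, i.e.\ $b\le 0$, so $F(\tilde P_1,\cdot)=b$ on this set as well, and analogously on $\{\tilde P_2=1\}$ one has $b\ge 0$ with $F(\tilde P_2,\cdot)=b$; elsewhere both values of $F$ equal $b$ automatically. Thus $\dot{\tilde P}_1=-\alpha b$ almost everywhere, and integration gives $\tilde P_1(t)=\tilde P_1(0)+\alpha P(t)$, whose oscillation equals $\alpha(\overline P-\underline P)\ge 1$, contradicting $\tilde P_1([0,T])\subset[0,1-c]$ of diameter $<1$. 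The borderline case $c=1$ would force $\tilde P_1\equiv 0$ and $\tilde P_2\equiv 1$, hence $-\alpha\min\{b,0\}=0=-\alpha\max\{b,0\}$, forcing $b\equiv 0$ and $\overline P=\underline P$, incompatible with the hypothesis $(\overline P-\underline P)^{-1}<\infty$. Therefore $c=0$ and $\tilde P_1\equiv\tilde P_2$.

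\emph{Main obstacle.} The principal technical point is the existence step, namely the well-posedness and continuous dependence for the Cauchy problem of $\dot y=-\alpha F(y,t)$ whose right-hand side is discontinuous at $y=0,1$. Because the jumps of $F$ at the endpoints point in the ``confining'' direction, this can be handled either by the standard reflected-ODE framework or by regularizing $F$ to a Lipschitz $F_\varepsilon$, solving the approximating problem, and passing to the limit $\varepsilon\to 0^+$. Once continuity of the Poincar\'e map is secured, the remainder of the argument is elementary and uses the hypothesis $\alpha(\overline P-\underline P)\ge 1$ in exactly one place: the oscillation contradiction at the end of the uniqueness proof.
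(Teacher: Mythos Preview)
Your proof is correct and follows the same strategy as the paper: existence via forward invariance of $[0,1]$ (the paper phrases this as sub/super-solutions $P_*\equiv 0$, $P^*\equiv 1$ rather than a Poincar\'e--Brouwer argument, which in one dimension amounts to the same thing), and uniqueness via the monotonicity $x_1<x_2\Rightarrow F(x_1,t)\le F(x_2,t)$ forcing the difference of two ordered periodic solutions to be a nonnegative constant $c$, followed by an oscillation contradiction when $c>0$. The only cosmetic difference in the final step is that the paper works with the midpoint $\tilde P_+:=\tilde P_1+c/2$, which lies strictly in $(0,1)$ and hence directly obeys $\dot{\tilde P}_+=-\alpha b$ without any boundary case analysis, whereas you argue on $\tilde P_1$ itself and treat $c=1$ separately; your explicit attention to the well-posedness of the discontinuous ODE is in fact more careful than the paper's brief treatment.
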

\begin{proof}
Recalling the definition of $F$ in \eqref{definition_F}, 
we observe that  $F(0,t)=\min\{b(t), 0\}\leq0$  and $F(1,t)=\max\{b(t), 0\}\geq0$, so that
$P_*\equiv0$ and $P^*\equiv1$ are a pair of sub- and super-solutions to \eqref{eq:tildeP}. Hence,
as $F$ is bounded,
there exists at least one $T$-periodic solution in  $W^{1,\infty}(\mathbb{R})$.

 For the uniqueness, given any two $T$-periodic solutions $\tilde{P}$ and $\tilde{P}_\alpha$ of  \eqref{eq:tildeP}, we 
 show $\tilde{P}=\tilde{P}_\alpha$. Suppose not, 
without loss of generality
 we may assume $\tilde{P}(0)<\tilde{P}_\alpha(0)$. We 
 consider two cases:
\smallskip

  \noindent {\bf (i)} If $\tilde{P}(t_1)=\tilde{P}_\alpha(t_1)$ for some $t_1\in(0,T)$,  in view of  $\tilde{P}(T)=\tilde{P}(0)<\tilde{P}_\alpha(0)=\tilde{P}_\alpha(T)$, by continuity there is some $\tilde t_1\in [t_1,T)$ such that $\tilde{P}(\tilde t_1)=\tilde{P}_\alpha(\tilde t_1)$ and $\tilde{P}(t)<\tilde{P}_\alpha(t)$ for any $t\in (\tilde t_1, T]$. Then
  by the  definition of $F$,
  it can be verified that for any $t\in  [\tilde t_1, T]$,
  \begin{equation}\label{liu0010}
   [\tilde{P}_\alpha(t)-\tilde{P}(t)]- [\tilde{P}_\alpha(\tilde t_1)-\tilde{P}(\tilde t_1)] =\alpha \int_{\tilde t_1}^t\left[F(\tilde{P}(s),s)-F(\tilde{P}_\alpha(s),s)\right]\, ds\leq 0,
  \end{equation}
 which implies $\tilde{P}_\alpha(T)-\tilde{P}(T)\leq \tilde{P}_\alpha(\tilde t_1)-\tilde{P}(\tilde t_1)=0$, a contradiction.

\smallskip
  \noindent {\bf (ii)} If  $\tilde{P}(t)<\tilde{P}_\alpha(t)$ for all $t\in[0,T]$, then \eqref{liu0010} holds for all $t\in[0,T]$ and $\tilde{t}_1=0$.
In view of $\tilde{P}_\alpha(T)-\tilde{P}(T)=\tilde{P}_\alpha(0)-\tilde{P}(0)$, we deduce that $$\tilde{P}_\alpha(t)-\tilde{P}(t)\equiv\tilde{P}_\alpha(0)-\tilde{P}(0)\quad\text{for all}\quad t\in[0,T].$$
In such a case, again by the definition of $F$, we infer that
\begin{equation*}
 \tilde{P}_+:=\tilde{P}+(\tilde{P}_\alpha(0)-\tilde{P}(0))/2\in(0,1)
\end{equation*}
 defines a $T$-periodic solution of \eqref{eq:tildeP}, and thus $ \dot{\tilde{P}}_+=-\alpha b(t)$, where $\tilde{P}_+\in(0,1)$ is due to
 $0\leq\tilde{P}<\tilde{P}_+<\tilde{P}_\alpha\leq 1$.  By recalling $P(t)=-\int_{0}^{t}b(s)\mathrm{d}s$, this implies that
 $\tilde{P}_+=\alpha P(t)+c\in(0,1)$
 for some constant $c\in\mathbb{R}$, so that
 $$1>\max_{[0,T]}\tilde{P}_+-\min_{[0,T]} \tilde{P}_+=\alpha(\overline{P}-\underline{P}),$$
  which contradicts $\alpha\geq\frac{1}{\overline{P}-\underline{P}}$. Lemma \ref{existenceandunique} thus follows.
  \end{proof}
We are now ready to prove Theorem \ref{sdthm4}.

\begin{proof}[Proof of Theorem  {\rm\ref{sdthm4}}]
The proof is 
divided into 
three steps. 

\medskip

{\bf Step 1.} Assume $\hat{b}\neq0$ and show part {\rm(i)}  of Theorem  \ref{sdthm4}.
Let $\Psi_1:[0,1]\times[0,T]\rightarrow \mathbb{R}$ denote a $T$-periodic diffeomorphism
given by
$$\Psi_1(y,t)=\alpha\left[\hat{b}t-\int_{0}^{t}b(s)\,\mathrm{d}s\right]+y.$$
Under the transformation $x=\Psi_1(y,t)$, as in \eqref{ldn_trans}, direct calculation from  \eqref{SD_eq 4} yields that $\lambda(D)$ defines the principal eigenvalue of  the problem
\begin{equation*}
\begin{cases}
 \smallskip
\partial_{t}\varphi-D\partial_{yy}\varphi- \alpha\hat{b}\cdot\partial_y\varphi+V_1\varphi
=\lambda(D)\varphi, \ \ &y\in (-\Psi_1(0,t),1-\Psi_1(0,t)), t\in [0,T],\\
\smallskip
 \partial_{y}\varphi(-\Psi_1(0,t),t)=\partial_{y}\varphi(1-\Psi_1(0,t),t)=0, \ \ & t\in[0,T], \\
 \varphi(y,0)=\varphi(y,T), &y\in [-\Psi_1(0, 0),1-\Psi_1(0, 0)],
\end{cases}
 \end{equation*}
where $V_1(y,t)=V\left(\Psi_1(y,t),t\right)$.  Then we can conclude that part {\rm(i)}  of Theorem  \ref{sdthm4} is a direct consequence of Theorem \ref{sdthmP}. Indeed, if $\hat{b}>0$ for example, then ODE \eqref{eq:P} with $\partial_xm=\alpha \hat{b}>0$ has no periodic solutions, so that by part {\rm(ii)} of Theorem \ref{sdthmP} we deduce that
$$\lim_{D\to 0}\lambda(D)=\frac{1}{T}\int_{0}^{T}V_1(1-\Psi_1(0,s),s)\mathrm{d}s=\hat{V}(1).$$
The same argument can be adapted to the case $\hat{b}<0$, which completes Step 1.

\medskip
{\bf Step 2.} Assume $\hat{b}=0$ and $0<\alpha\leq \frac{1}{\overline{P}-\underline{P}}$. We prove the first part of {\rm(ii)} in Theorem  \ref{sdthm4}.
Recall $P(t)=-\int_{0}^{t}b(s)\mathrm{d}s$ defined in Theorem  \ref{sdthm4}. Taking the transformation $x=y+\alpha P(t)$ in \eqref{SD_eq 4}, we derive that $\lambda(D)$ is also the principal eigenvalue of the problem
\begin{equation*}
\begin{cases}
 \smallskip
\partial_{t}\varphi-D\partial_{yy}\varphi+V_2\varphi
=\lambda(D)\varphi, \ \ &y\in (-\alpha P(t),1-\alpha P(t)),\, t\in [0,T],\\
\smallskip
 \partial_{y}\varphi(-\alpha P(t),t)=\partial_{y}\varphi(1-\alpha P(t),t)=0, \ \ & t\in [0,T], \\
\varphi(y,0)=\varphi(y,T), &y\in (-\alpha P(0),1-\alpha P(0)),
\end{cases}
 \end{equation*}
 where $V_2(y,t)=V\left(\alpha P(t)+y,t\right)$. Under the transformation $x=y+\alpha P(t)$,  all periodic solutions of \eqref{eq:P} are constants in the interval $ [-\alpha\underline{P},1-\alpha\overline{P}]$. This includes the special case $\alpha=\frac{1}{\overline{P}-\underline{P}}$, for which the interval reduces to a single point.  It is desired to show that
 \begin{equation*}
   \lim\limits_{D\rightarrow0}\lambda(D)=\min_{y\in[-\alpha\underline{P}, \,1-\alpha\overline{P}]}\hat{V}_2(y).
 \end{equation*}

First, the upper bound 
$\limsup_{D\rightarrow 0}\lambda(D)\leq \hat{V}_2(y)$, for any $y\in[-\alpha\underline{P},1-\alpha\overline{P}]$,
can be established by the same arguments as in Step 1 of Lemma \ref{sdlemdegen} by constructing the sub-solution locally. We thus omit the details here.

It remains to show the lower bound of $\liminf_{D\rightarrow 0}\lambda(D)$.
For any $\epsilon>0$,  we define  $T$-periodic function $V_{2\epsilon}\in C^{2,1}(\mathbb{R}\times[0,T])$ satisfying $\|V_{2\epsilon}-V_2\|_{L^\infty}\leq \epsilon$, and choose small $\delta>0$ such  that
\begin{equation}\label{liu000027}
\tilde{\lambda}_{\rm min}:=\hspace{-0.2cm}\min_{y\in[-\alpha\underline{P}-2\delta,1-\alpha\overline{P}+2\delta]}\hat{V}_{2\epsilon}(y)\geq \min_{y\in[-\alpha\underline{P},1-\alpha\overline{P}]}\hat{V}_2(y)- 2\epsilon.
\end{equation}

We define $\overline{\phi}\in C^{2,1}([-\alpha\underline{P}-2\delta,\,1-\alpha\overline{P}+2\delta]\times[0,T])$ by
\begin{equation}\label{liu000025}
\overline{\phi}(y,t):=\mbox{exp} \left[-\int_0^t V_{2\epsilon}(y,s)\mathrm{d}s+t\hat{V}_{2\epsilon}(y)\right]\beta_{\epsilon} (y),
\end{equation}
where  $\beta_{\epsilon}\in C^2([-\alpha\underline{P}-2\delta,\,1-\alpha\overline{P}+2\delta])$ is a positive function chosen such that
\begin{equation}\label{liu000026}
\partial_y\overline{\phi}<0 \,\,\text{ on }[-\alpha\underline{P}-2\delta,-\alpha\underline{P}]\times[0,T]\quad\text{and}\quad\partial_y\overline{\phi}>0\,\,\text{ on }[1-\alpha\overline{P},1-\alpha\overline{P}+2\delta]\times[0,T].
\end{equation}

Next, we aim to find a super-solution  $\overline \varphi\in C([0,1]\times[0,T])$ which satisfies 
 \begin{equation}\label{supersoltionprop4.1}
 \begin{cases}
  \smallskip
 \partial_{t}\overline{\varphi}-D\partial_{yy}\overline{\varphi}+V_2\overline{\varphi}\geq \left[\tilde{\lambda}_{\rm min}-3\epsilon\right]\overline{\varphi},  &y\in (-\alpha P(t),1-\alpha P(t))\backslash\mathbb{X},\, t\in [0,T],\\
 \smallskip
 \partial_{y}\overline{\varphi}(-\alpha P(t),t)\leq0 \le \partial_{y}\overline{\varphi}(1-\alpha P(t),t), & t\in [0,T], \\
\overline{\varphi}(y,0)=\overline{\varphi}(y,T), &y\in (-\alpha P(0), 1-\alpha P(0)),
 \end{cases}
\end{equation}
 where $\mathbb{X}=\{-\alpha\underline{P}-2\delta,\,1-\alpha\overline{P}+2\delta\}$.
Then it  follows from  Proposition \ref{appendixprop} and \eqref{liu000027} that
  \begin{equation*}
   \liminf\limits_{D\rightarrow0}\lambda(D)\geq\min_{y\in[-\alpha\underline{P}, \, 1-\alpha\overline{P}]}
   \hat{V}_2(y);
  \end{equation*}
see also Remark  \ref{appendixrem}. 

We only construct $\overline{\varphi}$ for $y\in(-\alpha P(t),1-\alpha\overline{P}+\delta)$ and $t\in[0,T]$. The constructions of the remaining regions are similar. To this end,  by the definition of $\underline{P}$, there exist $t_3>t_2$ such that
$$[t_2,t_3]\subset\{t\in[0,T]:-\alpha P(t)>-\alpha\underline{P}-\delta\}.$$
We then choose 
$\eta_4\in C^{2,1}((-\infty,1-\alpha\overline{P}+\delta]\times[0,T])$
to be a positive $T$-periodic function, and satisfy that $\partial_y\eta_4\leq0$ and
 \begin{equation}\label{eta4}
 \begin{cases}
 \medskip
 \eta_4\equiv 1 &\text{on }[-\alpha\underline{P}-\delta,\,1-\alpha\overline{P}+\delta]\times [0,T],\\
 \medskip
 \partial_t (\log\eta_4)>0 &\text{on }[-\alpha\underline{P}-2\delta,-\alpha\underline{P}-\delta)\times \left([0,T]\setminus [t_2,t_3]\right),\\
 \partial_t (\log\eta_4)\geq M_4&\text{on }(-\infty,-\alpha\underline{P}-2\delta]\times \left([0,T]\setminus [t_2,t_3]\right).
 \end{cases}
 \end{equation}
Here $M_4$ 
is chosen such that
$$M_4>\|V_2\|_{L^{\infty}}+\tilde{\lambda}_{\rm min}+ \|\partial_t\log \overline{\phi}\|_{L^{\infty}},$$
where $\overline{\phi}$ is defined by \eqref{liu000025}.
Moreover,  we extend $\overline{\phi}$ to $(-\infty,1-\alpha\underline{P}+2\delta]\times[0,T]$ by setting
$\overline{\phi}(\cdot,t)\equiv\overline{\phi}(-\alpha\underline{P}-2\delta,t)$ on $ (-\infty,-\alpha\underline{P}-2\delta)\times[0,T]$,
so that by \eqref{liu000026} we have
\begin{equation}\label{liu119}
 \partial_y\overline{\phi}((-\alpha\underline{P}-2\delta)^+,\cdot)<0=\partial_y\overline{\phi}((-\alpha\underline{P}-2\delta)^-,\cdot).
\end{equation}

Let $\overline{\phi}$ and $\eta_4$ be given by \eqref{liu000025} and \eqref{eta4}, then we define
  \begin{equation}\label{defineliu001}
 \overline{\varphi}(y,t):=\eta_4(y,t)\cdot\overline{\phi}(y,t).
  \end{equation}
 By  \eqref{liu119}, as $\eta_4$ is smooth,  one can 
 infer that
$$\partial_y\log \overline{\varphi}\left((-\alpha\underline{P}-2\delta)^+,\cdot\right)<\partial_y\log\overline{\varphi}\left((-\alpha\underline{P}-2\delta)^-,\cdot\right)\quad\text{as }\,-\alpha\underline{P}-2\delta\in\mathbb{X}.$$

It remains to check that $\overline{\varphi}$ defined above satisfies \eqref{supersoltionprop4.1}.

\smallskip
\noindent {\bf (i)} For $y\in(-\alpha P(t),1-\alpha P(t))\cap[-\alpha\underline{P}-\delta,1-\alpha\overline{P}+\delta]$ and $t\in[0,T]$,
since $\eta_4\equiv 1$ in \eqref{eta4},
we have
$\overline{\varphi}(y,t)=\overline{\phi}(y,t)$.
By the definition of $\overline{\phi}$ in \eqref{liu000025}, direct calculations  yield that
 \begin{equation*}
 \begin{split}
 \smallskip
\partial_{t}\overline{\varphi}-D\partial_{yy}\overline{\varphi}+V_2\overline{\varphi}&=\left[\hat{V}_{2\epsilon}(y)-V_{2\epsilon}(y,t)+V_2(y,t)\right]\overline{\phi}-D\partial_{yy}\overline{\phi}.
 \end{split}
 \end{equation*}
By the definition of $V_{2\epsilon}$, we can argue as in Lemma \ref{sdlemdegen} to choose $D$ small such that  the first inequality in \eqref{supersoltionprop4.1} holds.
 Then the part of boundary conditions on $\{-\alpha P(t),1-\alpha P(t)\}\cap[-\alpha\underline{P}-\delta,1-\alpha\overline{P}+\delta]$ and $t\in[0,T]$  can be verified by \eqref{liu000026}.

\smallskip
\noindent {\bf (ii)} For $y\in(-\alpha P(t),1-\alpha P(t))\cap[-\alpha\underline{P}-2\delta,-\alpha\underline{P}-\delta)$ and $t\in[0,T]$,
 since $t\in[0,T]\setminus [t_2,t_3]$ in this case, we use \eqref{eta4} and \eqref{defineliu001} to deduce that
 \begin{equation*}
 \begin{split}
  \partial_{t}\overline{\varphi}-D\partial_{yy}\overline{\varphi}+V_2\overline{\varphi}
 &= \left[\hat{V}_{2\epsilon}(y)-V_{2\epsilon}(y,t)+V_2(y,t)\right]\overline{\varphi}+\left[\partial_t(\log\eta_4)-D\partial_{yy}\overline{\varphi}\right]\overline{\varphi}\\
 &\geq \left[\tilde{\lambda}_{\rm min}-2\epsilon+\partial_t(\log\eta_4)+O(D)\right]\overline{\varphi}.
 \end{split}
 \end{equation*}
 Since  $\partial_t(\log \eta_4)>0$ in this case, again we  choose $D$ small such  that \eqref{supersoltionprop4.1} holds.
And the boundary conditions in this case can be verified by $\partial_y\overline{\phi}\leq0$ and $\partial_y\eta_4\leq0$.

\smallskip
\noindent {\bf (iii)} For $y\in(-\alpha P(t),1-\alpha P(t))\cap(-\infty,-\alpha\underline{P}-2\delta)$ and $t\in[0,T]$, since  $\overline{\phi}$ is independent of $y$, by  \eqref{eta4} and \eqref{defineliu001} direct calculation yields that
\begin{equation*}
\begin{split}
 \partial_{t}\overline{\varphi}-D\partial_{yy}\overline{\varphi}+V_2\overline{\varphi}
 \geq\Big[ (\log \overline{\phi})'+M_4-D\partial_{yy}\eta_4/\eta_4+V_2\Big]\overline{\varphi}.
\end{split}
\end{equation*}
Thus the first inequality in \eqref{supersoltionprop4.1} is verified  by
the definition of $M_4$, and the boundary condition follows from $\partial_y\eta_4\leq 0$. Step 2 is now completed.

\medskip
{\bf Step 3.} Assume $\hat{b}=0$ and $\alpha>\frac{1}{\overline{P}-\underline{P}}$. We establish the second part of {\rm(ii)} in Theorem  \ref{sdthm4}.
 Let $\tilde{P}_\alpha$ denote the unique solution of \eqref{eq:tildeP}.  
We  apply the transformation $x=y+\tilde{P}_\alpha(t)$ to rewrite problem \eqref{SD_eq 4} as
\begin{equation*}
\begin{cases}
 \medskip
\partial_{t}\varphi-D\partial_{yy}\varphi- \alpha\tilde{b}(t)\partial_{y}\varphi+V_3\varphi
=\lambda(D)\varphi, \ \ &(y,t)\in \tilde{\Omega},\\
\medskip
 \partial_{y}\varphi(-\tilde{P}_\alpha(t),t)=\partial_{y}\varphi(1-\tilde{P}_\alpha(t),t)=0, \ \ & t\in [0,T], \\
\varphi(y,0)=\varphi(y,T), &y\in (-\tilde{P}_\alpha(0),1-\tilde{P}_\alpha(0)),
\end{cases}
 \end{equation*}
 where $\tilde{b}(t):=b(t)-F(\tilde{P}_\alpha(t),t)$, $V_3(y,t)=V(\tilde{P}_\alpha(t)+y,t)$, and
 $$\tilde{\Omega}=\left\{(y,t):y\in (-\tilde{P}_\alpha(t),1-\tilde{P}_\alpha(t)),\,t\in[0,T]\right\}.$$
 See Fig.\ref{figure3} for an example of this transformation.
 \begin{figure}[http!!]
  \centering
\includegraphics[height=1.4in]{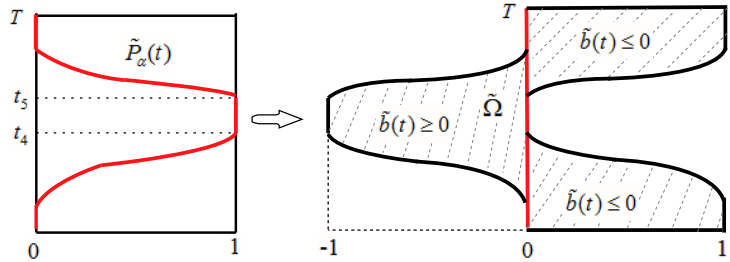}
  \caption{The diagram of $\tilde{\Omega}$ under transformation $x=y+\tilde{P}_\alpha(t)$. The red colored curve in the left side picture corresponds to $\tilde{P}_\alpha(t)$,  whereas the red colored line in the right side picture is the image of $\tilde{P}_\alpha(t)$ after the transformation.}\label{figure3}
  \end{figure}

  It remains to prove
   $$\lim\limits_{D\rightarrow0}\lambda(D)=\hat{V}_3(0).$$
The upper bound   $\limsup_{D\rightarrow 0}\lambda(D)\leq \hat{V}_3(0)$  can be established by using the arguments in Step 1 of Proposition  \ref{sdlem1}.  We next prove $\liminf_{D\rightarrow 0}\lambda(D)\geq \hat{V}_3(0)$.

We claim that if $\alpha>\frac{1}{\overline{P}-\underline{P}}$,   then
  $$ \mathrm{mes}\Big\{t\in[0,T]:\tilde{P}_\alpha(t)\in \{0,1\}\,\text{ and }\,b\neq 0\Big\}>0,$$
 i.e. there exist $0\leq t_4<t_5\leq T$ such that
$b\neq 0$, and $\tilde{P}_\alpha(t)\equiv0$ or $\tilde{P}_\alpha(t)\equiv1$ on $[t_4,t_5]$.
Suppose not, then $\tilde{P}_\alpha$ is also a periodic solution of $\dot{P}(t)=-\alpha b(t)$, so that $\tilde{P}_\alpha(t)=P(t)+c$ for  $c\in\mathbb{R}$, where $P(t)=-\int_{0}^{t}b(s)\mathrm{d}s$ as defined in part {\rm (ii)} of Theorem \ref{sdthm4}. Since $\tilde{P}_\alpha\in[0,1]$,
we have
 $$1\geq\max_{[0,T]}\tilde{P}_\alpha-\min_{[0,T]}\tilde{P}_\alpha=\alpha(\overline{P}-\underline{P}),$$
which contradicts $\alpha>\frac{1}{\overline{P}-\underline{P}}$.

In what follows, we assume $\tilde{P}_\alpha(t)\equiv1$ on $[t_4,t_5]$, and the proof is similar for the other case. To proceed further, we introduce positive functions $\overline{z}_5\in C^2(\mathbb{R})$ and $\eta_5\in C^1([0,T])$ as follows:
For any $\epsilon>0$, we  choose some small $\delta>0$ such that
\begin{equation}\label{defdeltathm1.1}
\begin{array}{ll}
|V_3(y,t)-V_3(0,t)|<\epsilon/2 \,\,\text{ on } \,[-2\delta,2\delta]\times[0,T].
\end{array}
\end{equation}
We first choose $\eta_5$ to be $T$-periodic and
\begin{equation}\label{eta3}
 (\log\eta_5)'>2\|V\|_{L^{\infty}}+\|(\log f_1)'\|_{L^{\infty}}
 \quad\text{on }\,\, [0,t_4+\delta]\cup[t_5-\delta,T].
\end{equation}
Then we choose  $\overline{z}_5$ such that
\begin{equation}\label{overlinez3}
\begin{cases}
\smallskip
\overline{z}_5'(y)<0 \,\,\text{ in }\,(-\infty,0),\,\,\,\,\overline{z}_5'(y)>0 \,\,\text{ in }\,(0,\infty),\\
(\log\overline{z}_5)'\leq-M_5\,\,\text{ in }\,(-\infty,-\delta),
\end{cases}
\end{equation}
where  $M_5$ is some large constant to be determined later.

We define 
\begin{equation}\label{liu008}
  \overline{\varphi}(y,t):=\overline{z}_5(y)\cdot\begin{cases}
f_1(t) &\text{for }|y|\leq\delta,\\
\zeta_5(y,t)&\text{for }-2\delta<y<-\delta,\\
\zeta_5(-y,t)&\text{for }\delta<y<2\delta,\\
\eta_5(t)&\text{for }|y|\geq 2\delta,
\end{cases}
\end{equation}
where $f_1$ is defined by \eqref{deff} with $x=1$.  Due to the choice of $\eta_5$ in \eqref{eta3}, $\zeta_5$ can be chosen such that $\overline{\varphi}\in C^{2,1}(\mathbb{R}\times[0,T])$ and
\begin{equation}\label{zeta4}
\begin{array}{l}
\partial_t(\log\zeta_5)\geq (\log f_1)' \quad\text{on }\,\, [0,t_4+\delta]\cup[t_5-\delta,T].
\end{array}
\end{equation}


We shall verify that $\overline{\varphi}$ defined by \eqref{liu008} satisfies
 \begin{equation}\label{supersoltionthm1.3}
L_D\overline{\varphi}:= \partial_{t}\overline{\varphi}-D\partial_{yy}\overline{\varphi}- \alpha\tilde{b}(t)\partial_{y}\overline{\varphi}+V_3\overline{\varphi}\geq (\hat{V}_3(0)-\epsilon)\overline{\varphi}\quad\text{for}\quad(y,t)\in \tilde{\Omega},
\end{equation}
provided that $D$ is small enough. The verification is divided into the following  cases:

\medskip
\noindent {\bf (i)} For $(y,t)\in([-\delta,\delta]\times[0,T])\cap\tilde{\Omega}$, we note that (see Fig.\ref{figure3})
$$\tilde{b}(t)\geq 0 \,\,\text{ in }\,\,([-\delta,0]\times[0,T])\cap\tilde{\Omega}\quad\text{and}\quad\tilde{b}(t)\leq0 \,\,\text{ in }\,\,([0,\delta]\times[0,T])\cap\tilde{\Omega}.$$
One can  check \eqref{supersoltionthm1.3} by the same arguments as in Step 2 of Proposition \ref{sdlem1}.

\medskip

\noindent {\bf (ii)} For $(y,t)\in((-\infty,-\delta]\times[t_4+\delta,t_5-\delta])\cap\tilde{\Omega}=(-1,-\delta)\times[t_4+\delta,t_5-\delta]$ (since $\tilde{P}_\alpha(t)\equiv1$ on $[t_4,t_5]$), there exists some $\epsilon_0>0$ such that $\tilde{b}(t)>\epsilon_0$. By the choice of $\overline{z}_5$ in \eqref{overlinez3} and construction \eqref{liu008},  direct calculation gives
\begin{equation*}
\begin{split}
L_D\overline{\varphi}\geq  &\Big[-\left|(\log\eta_5)'+\partial_t(\log \zeta_5)'\right|
-D\partial_{yy}\overline{\varphi}+\alpha\epsilon_0M_5-\alpha \tilde{b}\left|\zeta_5\right|+V_3\Big]\overline{\varphi}.
\end{split}
\end{equation*}
By choosing $M_5$  large and  $D$ small, we can verify that \eqref{supersoltionthm1.3} holds.

\medskip

\noindent {\bf (iii)} For $(y,t)\in\left([-2\delta,-\delta]\times([0,t_4+\delta]\cup[t_5-\delta,T])\right)\cap\tilde{\Omega}$, by construction, $\overline{\varphi}(y,t)=\overline{z}_5(y)\zeta_5(y,t)$. Observe that $\tilde{b}\geq 0$ in this case. Using \eqref{overlinez3}, we choose  $M_5$ large  such that
$$-\tilde{b}(t)\partial_y\overline{\varphi}\geq\tilde{b}(t)\left[M_5-\partial_y(\log\zeta_5)\right]\overline{\varphi}\geq 0.$$
 Hence, by  \eqref{defdeltathm1.1} and \eqref{zeta4}, for small $D$ we arrive at
\begin{equation*}
\begin{split}
L_D\overline{\varphi}&\geq \Big[\partial_t(\log\zeta_5)
+V_3-\epsilon/2\Big]\overline{\varphi}\\
&\geq \Big[(\log f_1)'+V_3-\epsilon/2\Big]\overline{\varphi}\\
&= \Big[\hat{V}_3(0)-V_3(0,t)+V_3(y,t)-\epsilon/2\Big]\overline{\varphi}\\
&\geq \Big[\hat{V}_3(0)-\epsilon\Big]\overline{\varphi}.
\end{split}
\end{equation*}

\smallskip
\noindent {\bf (iv)} For $(y,t)\in\left((-\infty,-2\delta)\times([0,t_4+\delta]\cup[t_5-\delta,T])\right)\cap \,\tilde{\Omega}$, by \eqref{liu008} we have $\overline{\varphi}(y,t)=\overline{z}_5(y)\eta_5(t)$. Also since $\tilde{b}\geq 0$, the choice of $\overline{z}_5$ in \eqref{overlinez3} implies $-\tilde{b}(t)\partial_y\overline{\varphi}\geq 0$.  Choosing $D$  smaller if necessary, we use \eqref{eta3} to deduce  that
\begin{equation*}
\begin{split}
L_D\overline{\varphi}&\geq\Big[(\log\eta_5)'-D\overline{z}''_5/\overline{z}_5-V_3\Big]\overline{\varphi}\geq \hat{V}_3(0)\overline{\varphi}.
\end{split}
\end{equation*}

\noindent {\bf (v)} For $(y,t)\in((\delta,\infty)\times[0,T])\cap\tilde{\Omega}$, the verification of \eqref{supersoltionthm1.3} is rather similar to that in cases {\bf (ii)}-{\bf (iv)}, and thus is omitted.

 Finally,  we verify the boundary conditions
 \begin{equation}\label{liu000031}
 \partial_{y}\overline{\varphi}(-\tilde{P}_\alpha(t),t)\leq0\quad\text{and}\quad \partial_{y}\overline{\varphi}(1-\tilde{P}_\alpha(t),t)\geq0 \quad\text{for}\quad  t\in [0,T].
 \end{equation}
For the set $\{t\in[0,T]: -\alpha\tilde{P}_\alpha(t)\in[-2\delta,-\delta]\,\text{ or }\, 1-\alpha\tilde{P}_\alpha(t)\in[\delta, \,2\delta]\}$,
 we can choose   $M_5$ large such that $M_5>\|\partial_y(\log\zeta_5)\|_{L^\infty}$  to verify \eqref{liu000031} as in case {\bf (iii)}. The verification of \eqref{liu000031} for the remaining cases is straightforward.

By \eqref{supersoltionthm1.3} and \eqref{liu000031}, we apply Proposition \ref{appendixprop} and Remark  \ref{appendixrem} to conclude $\liminf_{D\rightarrow0}\lambda(D)\geq\hat{V}_3(0)$.
The proof of Theorem  \ref{sdthm4} is thereby completed.
%
%
\end{proof}

\begin{appendices}
\section{Generalized super/sub-solution for a periodic parabolic operator}
In this section, we introduce a generalized definition of super/sub-solution for a time-periodic parabolic operator
and then present a comparison result. 
This result is a mortification of Proposition A.1 in \cite{LLPZ20192}, and  it plays a vital role in 
this paper.

Let $\mathcal{L}$ denote the following linear parabolic operator over $(0,1)\times[0,T]$:
$$\mathcal{L}=\partial_{t}\varphi-a_1(x,t)\partial_{xx}- a_2(x,t)\partial_x+a_0(x,t).$$
In the sequel, we always assume $a_1(x,t)>0$ so that $\mathcal{L}$ is uniformly elliptic for each $t\in[0,T]$, and
assume  $a_0,a_1,a_2\in C([0,1]\times[0,T])$ are $T$-periodic in $t$.

Consider the linear parabolic problem
\begin{equation}\label{appendix}
\tag{A.1}
 \begin{cases}
 \smallskip
\mathcal{L}\varphi=0\ \ &{\text{in}}\,\,(0,1)\times[0,T],\\
\smallskip
 c_1\partial_x\varphi(0,t)-(1-c_1)\varphi(0,t)=0\ \ & {\text{on}}\,\,[0,T], \\
 \smallskip
c_2\partial_x\varphi(0,t)+(1-c_2)\varphi(1,t)=0 \ \ & {\text{on}}\,\,[0,T], \\
 \varphi(x,0)=\varphi(x,T) &{\text{on}}\,\,(0,1),
 \end{cases}
 \end{equation}
 where $c_1,c_2\in[0,1]$.
We now define the  super/sub-solution corresponding to \eqref{appendix} as follows.

\begin{definition}\label{appendixldef}
The function $\overline{\varphi}$ in $[0,1]\times[0,T]$ is  called a  super-solution of \eqref{appendix} if there  exists a  set $\mathbb{X}$  consisting of at most finitely many points: 
$$\mathbb{X}=\emptyset \,\,\text{ or }\,\,\mathbb{X}=\left\{\kappa_i\in (0,1):\ \, i=1,\ldots,N\right\}$$
for some  integer $N\geq1$, such that 
\begin{itemize}
  \item [(i)]  $\overline{\varphi}\in C\left((0,1)\times[0,T]\right)\cap C^2\left(((0,1)\setminus\mathbb{X})\times[0,T]\right);$
  \item [(ii)] $\partial_x\overline{\varphi}(x^+,t)<\partial_x\overline{\varphi}(x^-,t)$ \,\,\, for every\, $x\in\mathbb{X}$ \,and \,$t\in[0,T]$;
  \item [(iii)]  $\overline{\varphi}$ satisfies
  \begin{equation*}
 \begin{cases}
 \smallskip
\mathcal{L}\overline{\varphi}\geq 0\ \ &{\mathrm{ in }}\,\,((0,1)\setminus\mathbb{X})\times(0,T),\\
\smallskip
  c_1\partial_x\varphi(0,t)-(1-c_1)\varphi(0,t)\leq 0 &{\mathrm{ on }}\,\,[0,T], \\
  \smallskip
 c_2\partial_x\varphi(1,t)+(1-c_2)\varphi(1,t)\geq 0  &{\mathrm{ on }}\,\,[0,T], \\
 \overline\varphi(x,0)\geq\overline\varphi(x,T) &\mathrm{ on } \,\,(0,1).
  \end{cases}
 \end{equation*}
\end{itemize}
A super-solution $\overline{\varphi}$ is called to be strict  if it is not a solution of \eqref{appendix}. Moreover,
a function $\underline{\varphi}$ is called a (strict) sub-solution of \eqref{appendix} if $-\underline{\varphi}$ is a  (strict) super-solution.
\end{definition}

Let $\lambda(\mathcal{L})$ denote the principal eigenvalue of  the problem
\begin{equation}\label{A.3}
 \tag{A.2}
 \begin{cases}
 \smallskip
\mathcal{L}\varphi=\lambda(\mathcal{L})\varphi\ \ &{\text{in}}\,\,(0,1)\times[0,T],\\
\smallskip
 c_1\partial_x\varphi(0,t)-(1-c_1)\varphi(0,t)=0\ \ & {\text{on}}\,\,[0,T], \\
 \smallskip
c_2\partial_x\varphi(0,t)+(1-c_2)\varphi(1,t)=0 \ \ & {\text{on}}\,\,[0,T], \\
 \varphi(x,0)=\varphi(x,T) &\text{on}\,\,(0,1).
 \end{cases}
 \end{equation}

The following result was proved in \cite[Proposition A.1]{LLPZ20192} for the case $c_1=c_2=1$, and it can be extended to the general case $c_1,c_2\in[0,1]$.

\begin{prop}\label{appendixprop}
Let $\lambda(\mathcal{L})$ denote the principal eigenvalue of \eqref{A.3}.
If \eqref{appendix} admits some strict positive super-solution defined in Definition {\rm \ref{appendixldef}}, then
$\lambda(\mathcal{L})\geq0$. Moreover, if \eqref{appendix} admits some strict  nonnegative sub-solution  defined in Definition {\rm \ref{appendixldef}}, then $\lambda(\mathcal{L})\leq0$.
\end{prop}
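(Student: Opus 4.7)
The plan is to adapt the duality argument of \cite[Proposition A.1]{LLPZ20192} from the pure Neumann case to the generalized Robin setting indexed by $c_1, c_2 \in [0, 1]$. First I would invoke the Krein-Rutman theorem to obtain the positive principal eigenfunction $\phi^*$ of the formal adjoint problem $\mathcal{L}^*\phi^* = \lambda(\mathcal{L})\phi^*$ with $\mathcal{L}^* = -\partial_t - \partial_{xx}(a_1\,\cdot\,) + \partial_x(a_2\,\cdot\,) + a_0$, $T$-periodic in $t$, equipped with the adjoint Robin boundary conditions chosen so that $\int_0^T \int_0^1 [\phi^*\mathcal{L}\varphi - (\mathcal{L}^*\phi^*)\varphi]\,dx\,dt = 0$ whenever $\varphi$ is smooth and satisfies the boundary conditions of \eqref{appendix} as equalities. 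A direct computation yields $a_1 \partial_x \phi^* = [a_2 - \partial_x a_1 - a_1(1-c_2)/c_2]\phi^*$ at $x = 1$ when $c_2 \in (0, 1]$, degenerating to the Dirichlet condition $\phi^*(1, t) = 0$ in the limit $c_2 = 0$; the symmetric expression holds at $x = 0$.

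With $\phi^*$ in hand, for the super-solution statement I would multiply the classical inequality $\mathcal{L}\overline{\varphi} \geq 0$ valid on $((0,1)\setminus\mathbb{X})\times(0,T)$ by $\phi^*$ and integrate over each subcylinder $(\kappa_i, \kappa_{i+1}) \times (0, T)$, obtaining $\int_0^T \sum_i \int_{\kappa_i}^{\kappa_{i+1}} \phi^* \mathcal{L}\overline{\varphi}\,dx\,dt \geq 0$. Integration by parts (twice in $x$ on each piece, once in $t$) to transfer derivatives onto $\phi^*$ produces four groups of terms: (a) the bulk integral $\int_0^T\int_0^1 (\mathcal{L}^*\phi^*)\overline{\varphi}\,dx\,dt = \lambda(\mathcal{L})\int_0^T\int_0^1 \phi^*\overline{\varphi}\,dx\,dt$; (b) the temporal contribution $\int_0^1 \phi^*(x, 0)[\overline{\varphi}(x, T) - \overline{\varphi}(x, 0)]\,dx \leq 0$ by $\phi^* > 0$ and the super-solution time inequality; (c) spatial boundary terms at $x = 0, 1$; and (d) interior jump terms $\int_0^T a_1(\kappa_i, t)\phi^*(\kappa_i, t)[\partial_x\overline{\varphi}(\kappa_i^+, t) - \partial_x\overline{\varphi}(\kappa_i^-, t)]\,dt \leq 0$ at each $\kappa_i \in \mathbb{X}$, whose sign is immediate from item (ii) of Definition \ref{appendixldef}. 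Once (c) is verified to be nonpositive, combining the four groups gives $\lambda(\mathcal{L})\int_0^T\int_0^1 \phi^*\overline{\varphi}\,dx\,dt \geq 0$, and since $\phi^*$ and $\overline{\varphi}$ are positive in the interior the integral is strictly positive, so $\lambda(\mathcal{L}) \geq 0$. The sub-solution claim then follows from the dual calculation: every inequality in (b)-(d) reverses (the time monotonicity becomes $\underline{\varphi}(x, 0) \leq \underline{\varphi}(x, T)$, the boundary inequalities flip, and $\partial_x\underline{\varphi}(\kappa^+) > \partial_x\underline{\varphi}(\kappa^-)$ makes the jump terms nonnegative), giving $\lambda(\mathcal{L})\int\int\phi^*\underline{\varphi}\,dx\,dt \leq 0$, which combined with strictness and $\underline{\varphi} \geq 0$ yields $\lambda(\mathcal{L}) \leq 0$.

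The main obstacle I foresee is step (c): verifying that the spatial boundary contribution is nonpositive uniformly in $c_1, c_2 \in [0, 1]$. Using the derived adjoint Robin condition, the contribution at $x = 1$ collapses (for $c_2 > 0$) to $-\frac{a_1(1, t)\phi^*(1, t)}{c_2}[c_2\partial_x\overline{\varphi}(1, t) + (1-c_2)\overline{\varphi}(1, t)]$, which is nonpositive directly by the super-solution boundary inequality. In the Dirichlet limit $c_2 = 0$ one must argue separately: the contribution reduces to $a_1(1, t)\partial_x\phi^*(1, t)\overline{\varphi}(1, t)$, still nonpositive because $\phi^*(1, t) = 0$ together with $\phi^* > 0$ in the interior forces $\partial_x\phi^*(1, t) \leq 0$ via the Hopf lemma, while the super-solution condition reduces to $\overline{\varphi}(1, t) \geq 0$. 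The $x = 0$ endpoint is handled symmetrically. Apart from this sign bookkeeping, step (d) is geometrically transparent: condition (ii) of Definition \ref{appendixldef} merely says that $-\partial_{xx}\overline{\varphi}$ picks up a positive Dirac mass at each corner in the distributional sense, which only reinforces the super-solution inequality rather than weakening it.
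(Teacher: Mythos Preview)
Your proposal is correct and aligns with what the paper indicates: the paper does not supply its own proof of Proposition~\ref{appendixprop} but simply states that the case $c_1=c_2=1$ was proved in \cite[Proposition~A.1]{LLPZ20192} and that the extension to general $c_1,c_2\in[0,1]$ is analogous. Your duality argument via the adjoint eigenfunction $\phi^*$, with the boundary bookkeeping in (c) and the corner-jump analysis in (d), is precisely that extension carried out in detail.
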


\begin{remark}\label{appendixrem}
{\rm Instead of $[0,1]\times[0,T]$, Proposition {\rm \ref{appendixprop} } also holds for the general domain given by
$\left\{(x,t): \beta_1(t)<x<\beta_2(t),\,t\in[0,T]\right\}$,
where $\beta_1,\beta_2\in C([0,T])$ satisfy $\beta_1<\beta_2$. This fact is applied in Section {\rm \ref{S4}} to prove Theorem {\rm \ref{sdthm4}}.}
\end{remark}
\end{appendices}
\bigskip
\noindent{\bf Acknowledgments.} We sincerely thank the referees for their suggestions which help improve the manuscript.
SL was partially supported by the Outstanding Innovative Talents Cultivation Funded Programs 2018 of Renmin Univertity of China and the NSFC grant No. 11571364;
YL  was partially supported by the NSF grant DMS-1853561;
RP was partially supported by the NSFC grant No. 11671175;
MZ was partially supported by Nankai ZhiDe Foundation.

\bigskip

\end {document}